\title[A generalization of the DR cycle via log-geometry]{A generalization of the double ramification cycle via log-geometry}
\author[Gu\'er\'e]{J\'er\'emy Gu\'er\'e}
\address{Humboldt Universit\"at\\
Berlin\\
Germany}
\email{jeremy.guere@gmail.com}
\newcommand{\kM}{\mathfrak{M}}
\newcommand{\kC}{\mathfrak{C}}
\newcommand{\kP}{\mathfrak{P}}
\newcommand{\PP}{\mathbb{P}}
\newcommand{\CC}{\mathbb{C}}
\newcommand{\ZZ}{\mathbb{Z}}
\newcommand{\NN}{\mathbb{N}}
\newcommand{\cO}{\mathcal{O}}
\newcommand{\cC}{\mathcal{C}}
\newcommand{\cD}{\mathcal{D}}
\newcommand{\cH}{\mathcal{H}}
\newcommand{\cL}{\mathcal{L}}
\newcommand{\cM}{\mathcal{M}}
\theoremstyle{plain}
\newtheorem{thm}{Theorem}[section]
\newtheorem{pro}[thm]{Proposition}
\newtheorem{lem}[thm]{Lemma}
\newtheorem*{lem*}{Lemma}
\newtheorem{cor}[thm]{Corollary}
\newtheorem{conj}[thm]{Conjecture}
\theoremstyle{definition}
\newtheorem{dfn}[thm]{Definition}
\newtheorem{rem}[thm]{Remark}
\newtheorem*{rem*}{Remark}
\newtheorem*{rems*}{Remarks}
\newtheorem*{exa*}{Example}
\newtheorem*{exait*}{\rm \em Example}
\newtheorem*{exadefit*}{\rm \em Example/Definition}
\newtheorem*{cla*}{\rm \em Claim}
\newtheorem*{dfn*}{Definition}
\def\<{\left\langle}
\def\>{\right\rangle}
\begin{document}
\begin{abstract}
We give a log-geometric description of the space of twisted canonical divisors constructed by Farkas--Pandharipande. In particular, we introduce the notion of a principal rubber $k$-log-canonical divisor, and we study its moduli space. It is a proper Deligne--Mumford stack admitting a perfect obstruction theory whose virtual fundamental cycle is of dimension $2g-3+n$. In the so-called strictly meromorphic case with $k=1$, the moduli space is of the expected dimension and the push-forward of its virtual fundamental cycle to the moduli space of stable curves equals the weighted fundamental class of the moduli space of twisted canonical divisors. Conjecturally, it yields a formula of Pixton generalizing the double ramification cycle in the moduli space of stable curves.
\end{abstract}

\maketitle

\tableofcontents

\setcounter{section}{-1}

\section{Introduction}
The double ramification cycle is a codimension-$g$ cycle $\mathrm{DR}_g(\vec{\mathbf{m}})$ in the moduli space of stable curves $\overline{\cM}_{g,n}$ attached to integers $\vec{\mathbf{m}} = (\mathbf{m}_1,\dotsc,\mathbf{m}_n)$ whose sum is zero.
On the open part $\cM_{g,n}$ consisting of smooth curves, it is described as
$$\left\lbrace \left[C,\sigma_1,\dotsc,\sigma_n\right] \in \cM_{g,n} ~ | ~ \cO_C(\mathbf{m}_1 \sigma_1 + \dotsb + \mathbf{m}_n \sigma_n) \simeq \cO_C \right\rbrace.$$
The double ramification cycle over the whole moduli space $\overline{\cM}_{g,n}$ is defined using Gromov--Witten theory of rubber $\PP^1$.
Precisely, the moduli space of the theory consists of stable maps from prestable curves to the target $\PP^1$ relative to the divisor $\left[0\right] + \left[\infty\right]$, up to the natural $\CC^*$-action on $\PP^1$, and with ramification profile at $0$ (resp.~at $\infty$) given by the positive (resp.~negative) integers among $\vec{\mathbf{m}}$.
The moduli space of relative stable maps has been developed in the
symplectic setting by Li--Ruan \cite{LR01} and Ionel--Parker \cite{IP03,IP04}, and in the algebraic setting by Li \cite{Li01,Li02}.
The idea is roughly to allow expansions of the target by attaching some extra $\PP^1$ at the divisor.
Li has developed a perfect obstruction theory and eventually the double ramification cycle is the push-forward of the virtual fundamental cycle
$$\mathrm{DR}_g(\vec{\mathbf{m}}) := \mathrm{st}_* \left[ \overline{\cM}_g(\PP^1,\vec{\mathbf{m}})^\sim \right]^\mathrm{vir} \in A_*(\overline{\cM}_{g,n})$$
of the moduli space of relative stable maps to rubber $\PP^1$, where the morphism $\mathrm{st}$ is the functor forgetting the map and the target and stabilizing the curve.

The double ramification cycle has been extensively studied in the context of moduli spaces, and appears now as a central object in the context of integrable hierarchies, see \cite{BurDR,BDGR}.
A precise formula as a sum of tautological classes in the moduli space of stable curves of compact type, i.e.~stable curves whose dual graph is a tree, has been obtained by Hain \cite{Hain}, but a formula for the double ramification cycle in the whole space $\overline{\cM}_{g,n}$ is much more involved.
It has been conjecturally found by Pixton \cite{Pixton} and proved in \cite{Janda}, and is based on an extension of Hain's formula as a sum over all dual graphs and on an ingenious regularization process, described for instance in \cite[Appendix A.3]{PandhFark} and in \cite{Janda}.

Interestingly, both Pixton's formula and the double ramification cycle have natural generalizations indexed by $k \in \NN$, and attached to a partition $\vec{\mathbf{m}}=(\mathbf{m}_1,\dotsc,\mathbf{m}_n)$ of $k(2g-2)$.
For instance, on the open part $\cM_{g,n}$, the generalization of the double ramification cycle is described as
$$\cH^k_g(\vec{\mathbf{m}}) := \left\lbrace \left[C,\sigma_1,\dotsc,\sigma_n\right] \in \cM_{g,n} ~ | ~ \cO_C(\mathbf{m}_1 \sigma_1 + \dotsb + \mathbf{m}_n \sigma_n) \simeq \underline{\omega}_C^{\otimes k} \right\rbrace,$$
where $\underline{\omega}_C$ denotes the usual canonical line bundle on a stable curve.
An extension of this cycle to the whole space $\overline{\cM}_{g,n}$ has been worked out in \cite{PandhFark} by constructing a closed substack $\widetilde{\cH}^k_g(\vec{\mathbf{m}})$ of $\overline{\cM}_{g,n}$ parametrizing the new notion of a $k$-twisted canonical divisor, see \cite[Definition 20]{PandhFark} or Section \ref{twiscan} for the definition.
In particular, it contains the closure $\overline{\cH}^k_g(\vec{\mathbf{m}})$ of $\cH^k_g(\vec{\mathbf{m}})$ into $\overline{\cM}_{g,n}$.
By \cite[Theorem 21]{PandhFark}, all the irreducible components of $\widetilde{\cH}^k_g(\vec{\mathbf{m}})$ are of dimensions at least $2g-3+n$.
In the special case $k=1$, we have two distinct situations:
\begin{itemize}
\item in the so-called strictly meromorphic case, i.e.~at least one integer $\mathbf{m}_i$ is negative, then the stack $\widetilde{\cH}^1_g(\vec{\mathbf{m}})$ is of pure dimension $2g-3+n$, and contains $\overline{\cH}^1_g(\vec{\mathbf{m}})$ as a union of irreducible components, see \cite[Theorem 3]{PandhFark},
\item in the so-called holomorphic case, i.e.~every integer $\mathbf{m}_i$ is non-negative, the stack $\overline{\cH}^1_g(\vec{\mathbf{m}}) \subset \widetilde{\cH}^1_g(\vec{\mathbf{m}})$ is of pure dimension $2g-2+n$, and all the other irreducible components of $\widetilde{\cH}^k_g(\vec{\mathbf{m}})$ are of pure dimension $2g-3+n$, see \cite[Theorem 2]{PandhFark}.
\end{itemize}
Furthermore, a conjectural relationship between a weighted fundamental class $H_g(\vec{\mathbf{m}})$ of the space $\widetilde{\cH}^1_g(\vec{\mathbf{m}})$ and the generalized Pixton's formula has been proposed in \cite[Appendix A.3]{PandhFark} in the so-called strictly meromorphic case with $k=1$.

The present paper aims to provide a geometric meaning of the weighted fundamental class $H_g(\vec{\mathbf{m}})$ using Gromov--Witten theory on a natural moduli space, in the spirit of the definition of the double ramification cycle via stable maps to rubber $\PP^1$.
Actually, relative Gromov--Witten theory as defined in \cite{Li01,Li02} works only for a target space with mild singularities, such as two smooth varieties meeting along a smooth divisor, and it involves expansions of the target space.
We consider here another approach using logarithmic geometry, and without expansions of the target.
Moduli spaces of log-stable maps and Gromov--Witten theory of log-varieties have been developed in \cite{QChen,QChen2,QCheneval} and in \cite{GrossSieb}.
Log-Gromov--Witten theory is still defined for smooth objects, but the advantage of working in the category of log-schemes is that there are more smooth objects. For instance, given a variety with a normal crossing divisor, there is a natural way to endow it with a log-structure, and if the variety is smooth outside the divisor, then the log-variety we obtain is log-smooth. In particular, the logarithmic approach covers the cases considered in \cite{Li01,Li02}, and \cite[Theorem 1.1]{Wise} shows that these two approaches are compatible.

In this paper, we define the notion of a $k$-log-canonical divisor.
Guided by the case $k=0$, where we view a map from a curve $C$ to the target space $\PP^1$ as a section of the trivial $\PP^1$-bundle over $C$, we define a $k$-log-canonical divisor as a log-curve $(C,\cM_C)$ together with a log-section $(f,f^\flat) \colon (C,\cM_C) \to (P,\cM_P)$ of the projective bundle
$$P := \mathrm{Proj}(\cO_C \oplus \omega_C^{\otimes k}),$$
endowed with the log-structure obtained by the divisor $\left[0\right]+\left[\infty\right]$ and by the pull-back of the log-structure $\cM_C$.
Here, the line bundle $\omega_C$ is the log-canonical line bundle of the log-curve $(C,\cM_C)$, and equals
$$\omega_C := \omega_{(C,\cM_C)} \simeq \underline{\omega}_C(\sigma_1+\dotsb+\sigma_n).$$
We relate the notion of $k$-log-canonical divisors with the log-maps described in \cite{QChen}, and we impose two further conditions: stability and minimality.
The stability condition is on the schematic map $f$ and forbids the appearance of non-trivial infinitesimal automorphisms.
The minimality condition is on the log-structure and was introduced in \cite[Definition 3.5.1]{QChen}.

\begin{thm}[see Theorem \ref{DMstack}]
The moduli space $\cD_{g,n}^{(k)}$ of minimal and stable $k$-log-canonical divisors is a proper Deligne--Mumford stack carrying a perfect obstruction theory.
\end{thm}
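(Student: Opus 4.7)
The plan is to realise $\cD_{g,n}^{(k)}$ as a moduli stack of minimal log-stable maps in the framework of \cite{QChen}, adapted to a target varying in families. Let $\kM_{g,n}$ denote Olsson's stack of $n$-pointed pre-stable log-curves of genus $g$, and $\pi \colon \kC \to \kM_{g,n}$ its universal log-curve. Over $\kC$ form the projective bundle $\kP := \mathrm{Proj}(\cO_\kC \oplus \omega^{\otimes k})$, where $\omega$ is the relative log-dualising sheaf, endowed with the log-structure generated by $[0]+[\infty]$ together with the pullback of $\cM_\kC$. Since $\kC \to \kM_{g,n}$ is log-smooth and $\kP \to \kC$ is a toric $\PP^1$-bundle with its standard log-structure, the composite $\kP \to \kM_{g,n}$ is log-smooth and the relative log-tangent sheaf $T^{\mathrm{log}}_{\kP/\kC}$ is invertible. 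The moduli problem then becomes the stack of minimal log-sections of $\kP \to \kC$ subject to the stability condition on the underlying schematic map.

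Algebraicity follows from the representability of Hom-stacks of projective morphisms (Artin, Lieblich): the stack of scheme-theoretic sections of $\kP \to \kC$ is algebraic, and promoting to log-sections with the minimality condition of \cite[Definition 3.5.1]{QChen} contributes only finite combinatorial data at each geometric point. The stability assumption on $f$ kills infinitesimal automorphisms, so the diagonal of $\cD_{g,n}^{(k)}$ is unramified, which combined with algebraicity gives the Deligne--Mumford property.

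For properness I would verify the valuative criterion. Given a minimal stable $k$-log-canonical divisor over the generic point of a discrete valuation ring, the underlying pre-stable curve extends (remaining inside $\kM_{g,n}$, since we do not impose stability on the curve itself), and the rational log-section of the projective bundle on the generic fiber extends after a sequence of blow-ups of the central fiber along the $[0]$ or $[\infty]$ loci; this is the log-theoretic substitute for target expansions in the sense of Li, made intrinsic by \cite{Wise}. The universal property of minimality in \cite{QChen} then uniquely determines the log-structure on the limit, while stability ensures separatedness at the level of automorphisms. The perfect obstruction theory is obtained from the universal log-section $f \colon \cC \to \cP$: relative to the log-smooth projection $\cD_{g,n}^{(k)} \to \kM_{g,n}$, Olsson's log-deformation theory provides the two-term obstruction theory with virtual tangent complex $R\pi_* f^* T^{\mathrm{log}}_{\cP/\cC}$, which is perfect of tor-amplitude $[0,1]$ because $T^{\mathrm{log}}_{\cP/\cC}$ is a line bundle on a family of nodal curves.

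The main obstacle will be the valuative criterion. Extending the log-section across the central fiber demands simultaneously controlling the birational modifications of the curve needed to make the section regular and the minimality of the resulting log-structure on the limit. I would rely on the valuative-criterion techniques of \cite{QChen} and \cite{GrossSieb} for log-stable maps combined with the compatibility results of \cite{Wise}; a direct verification tailored to log-sections of projective bundles should nevertheless be more transparent than the general case, because the target admits an explicit toric description in the fibers.
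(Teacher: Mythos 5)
Your overall architecture (realize $\cD_{g,n}^{(k)}$ as minimal log-sections of a $\PP^1$-bundle over the universal log-curve, derive algebraicity, properness, and a POT from Olsson's log-cotangent complex and the machinery of Chen and Gross--Siebert) is the right one, and the obstruction-theory step is essentially correct: you identify $R\pi_* f^* T^{\log}_{\cP/\cC}$ as the virtual tangent complex, which is exactly what the paper writes as $(R\pi_*[f^*\Theta_{\kP/\kC}])^\vee$ relative to the stack of pre-stable log-curves, with perfection of tor-amplitude $[0,1]$ following because $\Theta_{\kP/\kC}$ is a line bundle on a nodal family. Your algebraicity sketch differs from the paper's: the paper works fiberwise over a chart $s \colon S \to \mathfrak{M}_{g,n}$, invokes the algebraicity and properness of the stack $\mathcal{K}_{s,\beta}$ of minimal log-stable maps to the Deligne--Faltings pair $(P_s, \cM^{0,\infty})/S$ from \cite{QChen2}, and identifies $\cD(s)$ with $\bigsqcup_\beta [\mathcal{K}^o_{s,\beta}(s)/\mathcal{A}ut(s)]$ (Lemma \ref{ident}); this bookkeeping also takes care of the subtle mismatch between automorphisms as log-sections versus automorphisms as log-maps (Remark \ref{DFaut}, Lemma \ref{repres}), a distinction you do not address. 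Your Hom-stack route would work but the clause that the log-structure and minimality contribute ``only finite combinatorial data'' is precisely the nontrivial content of \cite{QChen} and should not be waved away; it is cleaner to inherit it from \cite{QChen2} as the paper does.

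The genuine gap is in the valuative criterion. You write that the rational log-section ``extends after a sequence of blow-ups of the central fiber along the $[0]$ or $[\infty]$ loci,'' calling this ``the log-theoretic substitute for target expansions.'' This is the wrong picture and runs counter to the point of using log-geometry in the first place: the log approach explicitly \emph{avoids} expanding or modifying the target. The paper's proof of Lemma \ref{wvc} is conceptually different. One first extends the family of pre-stable curves $\pi_\eta$ to $\pi_S$ over $S$ (after finite base change), which itself requires an argument since $\mathfrak{M}_{g,n}$ is not proper --- the paper uses properness of $\overline{\cM}_{g,n}$ and of $\overline{\cM}_{0,2}(\PP^1)$ together with the stability conditions of Lemma \ref{stable} to control the unstable $(\PP^1,0,\infty)$ components. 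Then one forms the fixed Deligne--Faltings pair $(P_S,\cM^{0,\infty})$ over $C_S$ and invokes \cite[Theorem 6.1.1]{QChen}: the stack $\mathcal{K}(P_S)$ of minimal log-stable maps is already proper, so the $\eta$-point extends as a log-\emph{map}. No birational modification of $P_S$ or of $C_S$ is performed. The remaining nontrivial step, which your proposal does not isolate, is to show a posteriori that the extended log-map is still a log-\emph{section}; the paper does this by composing with the projection $p_S \colon P_S \to C_S$ and using flatness of $C_{S'} \to S'$ to conclude $p_S \circ f_S$ agrees with the base-change morphism $\Phi$. If you attempt the valuative criterion via expansions or blow-ups you will also need to reprove minimality of the limiting log-structure from scratch, whereas the paper gets it for free from the minimality built into $\mathcal{K}(P_S)$.
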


The moduli space $\cD_{g,n}^{(k)}$ is the analogue of relative stable maps, but we still need to impose rubberness, i.e.~to consider $k$-log-canonical divisors up to a $\CC^*$-action on the projective bundle.
It is done indirectly by considering a closed substack $\cD_{g,n}^{*(k)} \subset \cD_{g,n}^{(k)}$ determined by the $\CC^*$-admissibility condition, see Definition \ref{Cstaradm}.
A $\CC^*$-admissible, minimal, and stable $k$-log-canonical divisor is also called a rubber $k$-log-canonical divisor.
Then, forgetting the log-structures, the section, and stabilizing the source curve gives a natural map
$$\mathrm{st} \colon \cD_{g,n}^{*(k)} \to \overline{\cM}_{g,n}.$$
Moreover, the stack $\cD_{g,n}^{*(k)}$ has distinct open and closed components obtained by specifying integers $\vec{\mathbf{c}}=(\mathbf{c}_1,\dotsc,\mathbf{c}_n) \in \ZZ^n$ at the markings, see Section \ref{topoltype}.

\begin{thm}[see Theorem \ref{DMstar}]
Fix a partition $\vec{\mathbf{c}}=(\mathbf{c}_1,\dotsc,\mathbf{c}_n)$ of $k(2g-2+n)$, and write $\mathbf{m}_i := \mathbf{c}_i-1$.
The moduli space $\cD_{g,n,\vec{\mathbf{c}}}^{*(k)}$ is a proper Deligne--Mumford stack carrying a perfect obstruction theory.
Furthermore, all irreducible components of $\cD_{g,n,\vec{\mathbf{c}}}^{*(k)}$ are at least of the expected virtual dimension, and we have a surjective map
$$\mathrm{st} \colon \cD_{g,n,\vec{\mathbf{c}}}^{*(k)} \twoheadrightarrow \widetilde{\cH}^k_g(\vec{\mathbf{m}}) \subset \overline{\cM}_{g,n}.$$
\end{thm}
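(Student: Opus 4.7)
I would deduce the Deligne--Mumford and properness statements directly from Theorem \ref{DMstack}. The $\CC^*$-admissibility condition of Definition \ref{Cstaradm} is closed inside $\cD_{g,n}^{(k)}$, and the topological type $\vec{\mathbf{c}}$ of Section \ref{topoltype} is locally constant on $\CC^*$-admissible objects; hence $\cD_{g,n,\vec{\mathbf{c}}}^{*(k)}$ sits in $\cD_{g,n}^{(k)}$ as a union of connected components of a closed substack, and both properties are inherited. For the perfect obstruction theory, I would work with the log-deformation theory of $\CC^*$-admissible log-sections of $P = \mathrm{Proj}(\cO_C \oplus \omega_C^{\otimes k})$: the obstruction complex is essentially $R\pi_* f^* T^{\logg}_{P/C}$ with the one-dimensional piece generated by the $\CC^*$-action factored out, and a Riemann--Roch calculation on the log-curve yields the expected virtual dimension $2g-3+n$. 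The lower bound $\dim \geq 2g-3+n$ on every irreducible component is then the standard consequence of the existence of a perfect obstruction theory, via the local Fredholm presentation as the zero locus of a section of a vector bundle on a smooth ambient stack.

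The main obstacle will be the surjectivity of $\mathrm{st}$ onto $\widetilde{\cH}^k_g(\vec{\mathbf{m}})$, which I would split into two inclusions. The forward inclusion is essentially a computation: given a rubber $k$-log-canonical divisor $(C,\cM_C,f,f^\flat)$, reading off from $f^\flat$ the vanishing orders on each side of every node produces an integer twist function on the edges of the dual graph of $C$, and the fact that $f$ is a section of $P$ with the prescribed zero/pole profile at markings forces, on each irreducible component $C_v$, an isomorphism $\cO_{C_v}\bigl(\sum \mathbf{m}_i \sigma_i + \text{twists at nodes}\bigr) \simeq \underline{\omega}_{C_v}^{\otimes k}$. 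This is exactly the defining condition of a $k$-twisted canonical divisor in \cite[Definition 20]{PandhFark}, so the image of $\mathrm{st}$ lies in $\widetilde{\cH}^k_g(\vec{\mathbf{m}})$.

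The converse inclusion is the most delicate step. Given $[C,\sigma_1,\dotsc,\sigma_n] \in \widetilde{\cH}^k_g(\vec{\mathbf{m}})$ together with a twist function on its dual graph, I would build a $\CC^*$-admissible minimal stable $k$-log-canonical divisor lifting it. The twist integers at the nodes determine a minimal log-structure $\cM_C$ on $C$ in the sense of \cite[Definition 3.5.1]{QChen}; on each irreducible component $C_v$, the isomorphism $\cO_{C_v}(\sum \mathbf{m}_i \sigma_i + \text{twists}) \simeq \underline{\omega}_{C_v}^{\otimes k}$ furnishes a section of the restriction of $P$ with the correct zeros and poles, and these component-wise sections assemble along the nodes into a global log-section $f$. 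The hard point is to check that this gluing is compatible with a single minimal log-structure on $C$ and produces a $\CC^*$-admissible log-lift $f^\flat$; this is essentially a log-geometric rephrasing of the balancing conditions on twist functions in \cite{PandhFark}. Stability and minimality then hold by construction, and since both sides are Deligne--Mumford stacks, pointwise existence of the lift is enough to conclude surjectivity of $\mathrm{st}$.
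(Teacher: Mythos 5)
Your deduction of Deligne--Mumford-ness and properness from Theorem \ref{DMstack} together with the closedness of the $\CC^*$-admissibility condition is exactly what the paper does (Proposition \ref{closedcond} and the first line of the proof of Corollary \ref{DMstar}). The organization of the surjectivity argument into two inclusions is likewise the right skeleton, and the forward inclusion is essentially Proposition \ref{compare}.

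For the obstruction theory, your guess $R\pi_* f^*T^{\logg}_{P/C}$ is actually correct up to naming: since $(P,\cM^{0,\infty})\to C$ is log-smooth of relative dimension one, $T^{\logg}_{P/C}\simeq\cO_P$, so this is just $R\pi_*\cO_C$, which is also what the paper obtains by restricting to the zero section and computing $0^*\Theta_{\kC^0/\kC}\simeq\cO_C$. But the mechanism you sketch --- ``factor out the one-dimensional piece generated by the $\CC^*$-action'' --- is not what happens and does not give a well-defined obstruction complex as stated. The $\CC^*$-rigidification has already been effected by the $\CC^*$-admissibility condition; there is no residual $\CC^*$ to quotient by. Moreover, since $\cD^{*(k)}$ is closed but \emph{not open} in $\cD^{(k)}$ (see the remark after Proposition \ref{closedcond}), you cannot restrict the obstruction theory of Theorem \ref{DMstack} and then modify it: the paper has to build a fresh obstruction theory relative to the stack $\underline{\cM}=\underline{\mathcal{L}og}^{\mathrm{fs}}_{(\overline{\cM}_{g,n},\cM^\NN_{\overline{\cM}_{g,n}})}$ of \emph{stable} log-curves, using that on $\cD^{*(k)}$ the schematic map factors through the $0$- or $\infty$-section. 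The missing $-1$ in your dimension count then comes not from a $\CC^*$-quotient but from the fact that $\underline{\mathcal{L}og}^{\mathrm{fs}}_{(\mathrm{pt},\NN)}$ is of pure dimension $-1$; the paper computes the virtual dimension via Manolache's virtual pull-back along the composition $\underline{\cD}^* \to \underline{\cM} \to \underline{\mathcal{L}og}^{\mathrm{fs}}_{(\mathrm{pt},\NN)}$.

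The more serious gap is in the converse direction of your surjectivity argument. You propose to build the $\CC^*$-admissible log-section directly on the stable curve $C$, with ``the correct zeros and poles'' on each $C_v$. But $\CC^*$-admissibility forces $f$ to be constantly $0$ or $\infty$ on every \emph{stable} component, which is incompatible with $f$ having non-trivial zero/pole profile anywhere. What is actually constructed (Proposition \ref{compare}) is a log-section on a \emph{destabilization} $C^{\mathrm{un}}$ of $C$, obtained by inserting $(\PP^1,0,\infty)$-bubbles at every marking with $\mathbf{c}_i<0$ and at every node separating the ``positive'' and ``negative'' parts of the dual graph; the zero/pole behavior is carried entirely by the bubbles (degree $c$ maps $[x:y]\mapsto[x^c:\lambda y^c]$), while the log-lift $f^\flat$ on the stable degenerate components is supplied by Proposition \ref{detail}. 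Without this bubbling step the lift does not exist, so the argument as written fails; once you include it the rest of your gluing plan (minimality from \cite[Definition 3.5.1]{QChen}, compatibility encoded by the twist balancing) matches the paper's proof.
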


The moduli space $\cD^{*(k)}_{g,n,\vec{\mathbf{c}}}$ is not reduced in general, and we compute the multiplicities of its irreducible components.
Moreover, it has a principal component $\cD_{\mathrm{pr},(g,n,\vec{\mathbf{c}})}^{*(k)}$, which is also mapping surjectively to $\widetilde{\cH}^k_g(\vec{\mathbf{m}})$, and leading to the following theorem.

\begin{thm}[see Corollary \ref{weighted}]
In the strictly meromorphic case with $k=1$, the virtual fundamental cycle of the moduli space $\cD_{\mathrm{pr},(g,n,\vec{\mathbf{c}})}^{*(1)}$ of principal rubber $1$-log-canonical divisors equals the fundamental cycle and we obtain
$$\mathrm{st}_* \left[ \cD_{\mathrm{pr},(g,n,\vec{\mathbf{c}})}^{*(1)} \right] = H_{g}(\vec{\mathbf{m}}) \in A^g(\overline{\cM}_{g,n}),$$
where the class $H_{g}(\vec{\mathbf{m}})$ is the weighted fundamental class from \cite[Appendix A.4]{PandhFark}.
In particular, \cite[Conjecture A]{PandhFark} is rephrased as
$\mathrm{st}_* \left[ \cD_{\mathrm{pr},(g,n,\vec{\mathbf{c}})}^{*(1)} \right] = 2^{-g} P^g_{g,\vec{\mathbf{m}}},$
where the class $P^g_{g,\vec{\mathbf{m}}}$ is Pixton's cycle for $k=1$.
\end{thm}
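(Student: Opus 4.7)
The plan is to prove the corollary in two stages: first reduce the virtual class on the principal component to the ordinary fundamental class via a dimension count, then identify the resulting pushforward with the Farkas--Pandharipande weighted class $H_g(\vec{\mathbf{m}})$ by a multiplicity comparison.

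\emph{Dimension reduction.} By Theorem \ref{DMstar} the stack $\cD_{g,n,\vec{\mathbf{c}}}^{*(1)}$ carries a perfect obstruction theory of virtual dimension $2g-3+n$, each of its irreducible components has at least the expected dimension, and the stabilisation map surjects onto $\widetilde{\cH}^1_g(\vec{\mathbf{m}})$. In the strictly meromorphic case, \cite[Theorem 3]{PandhFark} asserts that this target is pure of dimension $2g-3+n$. Restricting the surjection to the principal component and observing that the log-datum of a rubber $1$-log-canonical divisor over a generic point of the open stratum $\cH^1_g(\vec{\mathbf{m}})$ is pinned down up to finite ambiguity (the $\CC^*$-admissibility condition together with generic smoothness of the source curve), one sees that $\mathrm{st}$ is generically finite on the principal component, which is therefore pure of dimension $2g-3+n$. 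Because actual and virtual dimensions now agree, the obstruction sheaf $h^1(E^\vee)$ is generically zero and hence identically zero on this pure-dimensional stack, so standard intersection theory yields $[\cD_{\mathrm{pr},(g,n,\vec{\mathbf{c}})}^{*(1)}]^{\mathrm{vir}} = [\cD_{\mathrm{pr},(g,n,\vec{\mathbf{c}})}^{*(1)}]$.

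\emph{Pushforward identification.} I would decompose
$$\mathrm{st}_*\bigl[\cD_{\mathrm{pr},(g,n,\vec{\mathbf{c}})}^{*(1)}\bigr] = \sum_Y d_Y\,[Y],$$
where $Y$ ranges over irreducible components of $\widetilde{\cH}^1_g(\vec{\mathbf{m}})$ and $d_Y$ is the sum, over the irreducible components of the principal stack lying above $Y$, of the scheme-theoretic multiplicity times the generic mapping degree. The multiplicity computation for the irreducible components of $\cD_{g,n,\vec{\mathbf{c}}}^{*(1)}$ alluded to just before the statement expresses each such multiplicity purely in terms of the combinatorics of the minimal log-structure: the characteristic monoid at a geometric point is generated by level functions on the dual graph, and passing to the rubber quotient contributes a torsion factor indexed by the twist data at the nodes. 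The plan is to reduce $d_Y$ to an explicit product over the half-edges of the decorated dual graph indexing $Y$, and then compare half-edge by half-edge with the definition of the weight $m_Y$ in \cite[Appendix A.4]{PandhFark}.

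\emph{Main obstacle.} The hard step is the combinatorial identification $d_Y = m_Y$. On the log-geometric side the multiplicity is the torsion index of a lattice map of characteristic monoids for the minimal log-structure, whereas the Farkas--Pandharipande weight is a normalised product of twist orders along edges of the dual graph. Matching them will require an explicit local model of $\cD^{*(1)}_{g,n,\vec{\mathbf{c}}}$ near a generic point of each boundary stratum, obtained by describing the log-section of the projective bundle $\mathrm{Proj}(\cO_C \oplus \omega_C)$ as a piecewise-linear function on the tropicalisation of the source log-curve, computing the torsion index of the induced tropical lattice map, and matching it with the normalisation factor defining $H_g(\vec{\mathbf{m}})$. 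Once the equality $d_Y = m_Y$ is secured the first identity follows, and the Pixton reformulation is then a direct substitution of \cite[Conjecture A]{PandhFark}.
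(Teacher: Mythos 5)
Your two-stage plan mirrors the paper's structure, but both stages contain gaps. In the dimension-reduction step you argue that agreement of actual and virtual dimensions forces the obstruction sheaf to be ``generically zero and hence identically zero on this pure-dimensional stack''; that implication is invalid, since a coherent sheaf vanishing at generic points of a pure-dimensional stack may still be supported on a proper closed substack, so the conclusion that the virtual class equals the fundamental class does not follow from this line of reasoning (it follows instead from the dimension statement together with the structural analysis in the corollary preceding Proposition~\ref{multipli}).

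More seriously, the computation you flag as the ``Main obstacle''---the identification $d_Y = m_Y$---is precisely the heart of the statement, and your proposal leaves it undone. The paper resolves it in Proposition~\ref{multipli} by a direct mechanism: forgetting the log-map presents the moduli space of rubber $k$-log-canonical divisors as a root construction (in the sense of Cadman and of Matsuki--Olsson--Wise) over the reduced stack $\mathfrak{M}_{g,n}$ of prestable curves with standard log-structure, which immediately yields the scheme-theoretic multiplicity $\prod_{e\in E^*(G_0)}|c_e|$ for the stratum with marked graph $G_0$. The extra automorphism groups $\mu_{c_v}$ attached to the unstable $(\PP^1,0,\infty)$ components are then factored out, giving the pushforward coefficient $\prod_{e\in E^*(G_0^{\mathrm{stab}})}|c_e|$, and this is by definition the weight appearing in $H_g(\vec{\mathbf{m}})$. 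Your proposed torsion-index/tropicalisation calculation might well yield the same answer, but you would still need to carry it out and reconcile it with the Farkas--Pandharipande weight convention; as written, the proposal is a roadmap rather than a proof.
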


\noindent
\textbf{Acknowledgement.}
The author is extremely grateful to Rahul Pandharipande for motivating discussions and invaluable advice on this paper. He also thanks Daniele Agostini, Ignacio Barros, Dawei Chen, Gavril Farkas, Felix Janda, Hsueh-Yung Lin, Adrien Sauvaget, and Jonathan Wise for fruitful discussions. He thanks Gavril Farkas and Rahul Pandharipande for the organization of the Einstein foundation's conference ``Moduli spaces of holomorphic differentials'', and Dan Abramovich for his lecture on logarithmic geometry during the IRTG summer school ``Moduli and Automorphic Forms''. It is also a pleasure to acknowledge Qile Chen for his illuminating paper \cite{QChen}.
Eventually, the paper is part of a bigger project aiming at proving \cite[Conjecture A]{PandhFark} and including several collaborators.
The author was supported by the Einstein foundation.

\section{The moduli space}
In this section, we define the moduli space of log-maps relevant to the study of twisted differentials defined by \cite{PandhFark}.

\subsubsection{Recall on log-geometry}
In the whole paper, we work in the algebraic category and over $\CC$.
Let $X$ be a scheme.
A pre-log-structure on $X$ is an \'etale sheaf of monoids $\cM_X$ together with a morphism of sheaves of monoids
$$\exp_X \colon \cM_X \to \cO_X,$$
where the structure sheaf $\cO_X$ is a monoid under multiplication.
We usually write $\cM_X$ instead of $(\cM_X,\exp_X)$ when no confusions arise.
A log-structure on $X$ is a pre-log-structure $(\cM_X,\exp_X)$ such that
$$\exp_X \colon \exp_X^{-1}(\cO^*_X) \simeq \cO_X^*$$
is an isomorphism.
Thus, we identify $\cO^*_X$ as a subsheaf of $\cM_X$ and we call characteristic sheaf, or ghost sheaf, the quotient
$$\overline{\cM}_X := \cM_X / \cO^*_X.$$
The trivial log-structure on a scheme $X$ is given by the inclusion $\cO_X^* \subset \cO_X$.
To any pre-log-structure, there is a canonically associated log-structure.
Then, given a morphism of scheme $f \colon X \to Y$, we can define the pull-back of a log-structure $\cM_Y$ on $Y$ by taking the log-structure $f^*(\cM_X)$ associated to the pre-log-structure $f^{-1}(\cM_Y) \to f^{-1}(\cO_Y) \to \cO_X$.
An important example of a log-structure is the divisorial log-structure. Take $X$ a regular scheme and $D \subset X$ a divisor. Then we consider the log-structure associated to
$$\cM_X(U) = \left\lbrace g \in \cO_X(U) | g_{|U-D} \in \cO^*_X(U-D) \right\rbrace \hookrightarrow \cO_X(U).$$

A morphism of log-scheme
$$(X,\cM_X) \to (Y,\cM_Y)$$
is the data of a morphism of schemes $f \colon X \to Y$ and of a morphism of sheaves of monoids $f^\flat \colon f^*(\cM_Y) \to \cM_X$ making the following diagram commutative:
\begin{center}
\begin{tikzpicture}[scale=1]
\draw (0,0) node(A1){$f^*\cM_Y$}
 (4,0) node(A2){$\cM_X$}
 (2,-1) node(A3){$\cO_X$}
 (2,-0.5) node{$\circlearrowright$}
 (1,-0.7) node[left]{$\exp_Y$}
 (3,-0.7) node[right]{$\exp_X$}
 (2,0) node[above]{$f^\flat$};
 
\draw[->,>=stealth] (A1) to[bend right=0] (A2);
\draw[->,>=stealth] (A1) to[bend right=0] (A3);
\draw[->,>=stealth] (A2) to[bend right=0] (A3);
\end{tikzpicture}
\end{center}
The morphism $f$ is called the schematic morphism and we say that the log-morphism $(f,f^\flat)$ is strict if the morphism of log-structures $f^\flat$ is an isomorphism.

A chart of a log-structure $\cM_X$ on a scheme $X$ is a constant sheaf of monoid $P$ on $X$ together with a morphism $P \to \cM_X$ such that the log-structure $\cM_X$ is associated to the pre-log-structure $P \to \cM_X \to \cO_X$.
The monoid $P$ is called
\begin{itemize}
\item integral if the natural map $P \to P^\mathrm{gp}$ of the monoid to its associated group is an injection,
\item saturated if it is integral and if every $p \in P^\mathrm{gp}$ such that $n \cdot p \in P$ for some $n \in \NN^*$ is in $P$.
\item coherent if it is finitely generated
\item fine if it is integral and coherent
\item sharp if the identity element $0$ is the only invertible element in $P$.
\end{itemize}
A log-structure $\cM_X$ on a scheme $X$ is called fine or saturated if \'etale locally there is a chart $P \to \cM_X$, with $P$ a fine or saturated monoid.

\subsubsection{Notations}
Usually, we will denote by curly letters as $\cC$ the Deligne--Mumford stacks and by gothic letters as $\kC$ the Artin stacks.
The log-structure of a scheme (or a stack) $X$ is denoted by $\cM_X$; no confusion with the stack $\overline{\cM}_{g,n}$ of stable curves should occur.
Depending on the context, the notation $X$ holds for the scheme or for the log-scheme. In the first case, we would write $(X,\cM_X)$ for the log-scheme, and in the second case, we write $\underline{X}$ for the underlying scheme of a log-scheme.
The non-negative integer $k$ is fixed in the whole paper, except when specified otherwise.

\subsection{Log-curves and $k$-log-canonical divisors}
On any family of prestable curves, there is a standard log-structure induced by the sections of the marked points and by the boundary divisor of the moduli space of prestable curves $\mathfrak{M}_{g,n}$.

Precisely, the moduli space $\mathfrak{M}_{g,n}$ is endowed with the divisorial log-structure $\cM_{\mathfrak{M}_{g,n}}$ given by the boundary divisor $\mathfrak{M}_{g,n}^\mathrm{sing}$ of singular curves. Note that it is a normal crossing divisor.
Then the log-structure on the universal curve $\mathfrak{C}_{g,n}$ is
\begin{equation*}
\cM_{\mathfrak{C}_{g,n}} := \cM_{\mathfrak{C}_{g,n}}^\# \oplus_{\cO^*_{\mathfrak{C}_{g,n}}} \cM_{\mathfrak{C}_{g,n}}^\Sigma,
\end{equation*}
where $\cM_{\mathfrak{C}_{g,n}}^\#$ is the log-structure given by the pre-image of the boundary $\mathfrak{M}_{g,n}^\mathrm{sing}$, which is also a normal crossing divisor, and $\cM_{\mathfrak{C}_{g,n}}^\Sigma$ is the log-structure given by the divisor $\Sigma=\sigma_1+\dotsb+\sigma_n$ of the marked points.
%%%%%%%%%% Est-ce la même chose de prendre le diviseur $\Sigma$ ou de prendre la somme des log-structures données par chacun des points marqués? Je pense que c'est équivalent puisqu'ils sont tous disjoints.

Consider a flat family $\pi \colon C \rightarrow S$ of prestable curves of genus $g$ with $n$ markings over a base scheme $S$.
It corresponds to a morphism $\pi \colon S \rightarrow \mathfrak{M}_{g,n}$ and we define the standard log-structure $\cM_S^{C/S}$ on the base $S$ as the pull-back
$$\cM_S^{C/S} := \pi^* \cM_{\mathfrak{M}_{g,n}}.$$
Similarly, we define the standard log-structure $\cM_C^{C/S}$ on the curve $C$ by
$$\cM_C^{C/S} := \pi^* \cM_{\mathfrak{C}_{g,n}},$$
and we get a log-map
\begin{equation*}
(C,\cM_C^{C/S}) \rightarrow (S,\cM_S^{C/S}).
\end{equation*}

\begin{rem}
When $S = \mathrm{Spec}(k)$ is a geometric point, the characteristic sheaf $\overline{\cM}^{C/S}_S$ is isomorphic to $\mathbb{N}^m$, where $m$ is the number of nodes in the curve $C$.
\end{rem}

Consider another log-structure $\cM_S$ on the base scheme $S$, together with a morphism of log-structures $\cM_S^{C/S} \rightarrow \cM_S$.
By taking the identity morphism $S \rightarrow S$ as the schematic map, we therefore obtain a log-map $(S,\cM_S) \rightarrow (S,\cM_S^{C/S})$, and we form the cartesian product
\begin{center}
\begin{tikzpicture}[scale=1]
\draw (0,2) node(A1){$(C,\cM_C)$}
 (2+2,2) node(A2){$(C,\cM_C^{C/S})$}
 (2+2,0) node(A3){$(S,\cM_S^{C/S})$}
 (0,0) node(A4){$(S,\cM_S)$}
 (1-0.1+1,1-0.1)--(1+0.1+1,1-0.1)--(1+0.1+1,1+0.1)--(1-0.1+1,1+0.1)--(1-0.1+1,1-0.1);
 
\draw[->,>=stealth] (A1) to[bend right=0] (A2);
\draw[->,>=stealth] (A1) to[bend right=0] (A4);
\draw[->,>=stealth] (A2) to[bend right=0] (A3);
\draw[->,>=stealth] (A4) to[bend right=0] (A3);
\end{tikzpicture}
\end{center}

\begin{dfn}
A log-curve $(C,\cM_C)$ over a fine log-scheme $(S,\cM_S)$ is a log-map $(C,\cM_C) \rightarrow (S,\cM_S)$ obtained as above.
In particular, it is determined by a usual family of marked curves together with the morphism of log-structures $\cM_S^{C/S} \rightarrow \cM_S$.
We sometimes denote a log-curve by $$(C \rightarrow S, (\sigma_1,\dotsc,\sigma_n), \cM_S^{C/S} \rightarrow \cM_S).$$
We call this log-curve prestable if the log-structure $\cM_S$ is (fine and) saturated.
\end{dfn}

Let $\pi \colon (C,\cM_C) \rightarrow (S,\cM_S)$ be a prestable log-curve.
By \cite[Proposition 4.9]{ACbook}, the log-canonical line bundle $\omega_\pi$ is naturally isomorphic to
$$\omega_\pi \simeq \underline{\omega}_{\pi}(\sigma_1+\dotsb+\sigma_n),$$
where the sheaf $\underline{\omega}_\pi$ is the usual relative canonical line bundle of a family of schematic curves.
For any fixed non-negative integer $k$, we define the scheme
\begin{equation*}
P^{(k)}_\pi := \mathrm{Proj}(\cO_C \oplus \omega_\pi^k).
\end{equation*}
The integer $k$ is fixed once for all, and when no confusions arise, we simply write $P$ instead of $P^{(k)}_\pi$.
%Note that in smooth genus $0$ the scheme $P$ is an Hirzebruch surface.
This scheme comes together with a projection morphism $$p \colon P \rightarrow C,$$ with two sections
$$0 \colon C \to P \quad \textrm{and} \quad \infty \colon C \to P,$$
and with a log-structure
\begin{equation*}
\cM_P := p^*(\cM_C) \oplus_{\cO^*_P} \cM^{0,\infty},
\end{equation*}
where the log-structure $\cM^{0,\infty}$ is associated to the divisor $\left[0\right] + \left[\infty\right]$.
Using the natural map $p^*(\cM_C) \rightarrow \cM_P$, the projection map $p \colon (P,\cM_P) \rightarrow (C,\cM_C)$ is then a log-morphism.

\begin{dfn}
A $k$-log-canonical divisor on a prestable log-curve $(C,\cM_C) \rightarrow (S,\cM_S)$ is a log-section $f$ of the above defined $\PP^1$-bundle $(P,\cM_P)$ over $(C,\cM_C)$, i.e.~a log-map $f \colon (C,\cM_C) \rightarrow (P,\cM_P)$ such that the composition $p \circ f$ is the identity.
\end{dfn}
A $k$-log-canonical divisor is thus completely determined by a schematic section $C \xrightarrow{f} P$ together with a morphism of log-structures
\begin{center}
\begin{tikzpicture}[scale=1]
\draw (0,0) node(A1){$f^*\cM^{0,\infty}$}
 (4,0) node(A2){$\cM_C$}
 (2,-1) node(A3){$\cO_C$}
 (2,-0.5) node{$\circlearrowright$}
 (2,0) node[above]{$f^\flat$};
 
\draw[->,>=stealth] (A1) to[bend right=0] (A2);
\draw[->,>=stealth] (A1) to[bend right=0] (A3);
\draw[->,>=stealth] (A2) to[bend right=0] (A3);
\end{tikzpicture}
\end{center}
In the whole text, the $k$-log-canonical divisor $\xi$ corresponds to the tuple
\begin{equation*}
\xi = (C \rightarrow S, (\sigma_1,\dotsc,\sigma_n), \cM_S^{C/S} \rightarrow \cM_S ~ ; ~~  C \xrightarrow{f} P, f^*\cM^{0,\infty} \xrightarrow{f^\flat} \cM_C).
\end{equation*}
Furthermore, it will be convenient to denote by
$$\underline{\xi}=(C \rightarrow S, (\sigma_1,\dotsc,\sigma_n); ~~  C \xrightarrow{f} P)$$
the underlying family of schematic maps.
Recall that a schematic section $f \colon C \rightarrow P$ is equivalent to a couple $(\cL,s)$, where $\cL$ is a line bundle on $C$ and $s$ is a non-vanishing global section $$s \in H^0(C,\cL \oplus \omega_\pi^{-k} \otimes \cL).$$

\begin{rem}\label{DF}
It is important to keep in mind the following equivalent definition of a $k$-log-canonical divisor over a base scheme $S$.
First, we endow the projective bundle $P$ with the log-structure $\cM^{0,\infty}$ and then we require that the log-map $f \colon C \to P$ is a section of the schematic projection $p \colon P \to C$ only at the level of schemes. Indeed, the existence of a projection log-map $(P,\cM^{0,\infty}) \to (C,\cM_C)$ is not granted anymore, but there are still log-morphisms $f \colon (C,\cM_C) \to (P,\cM^{0,\infty})$ such that the schematic morphism is a section of the schematic projection $p \colon P \to C$.
This description of a $k$-log-canonical divisor has a huge advantage: the target $(P,\cM^{0,\infty})$ is a Deligne--Faltings pair of rank one, see \cite[Definition 6.1]{ACbook}.
Indeed, the log-structure $\cM^{0,\infty}$ is associated to a global section of the line bundle $\cO_P(\left[0\right]+\left[\infty\right])$ over $P$ vanishing at $0$ and at $\infty$.
Moreover, the scheme $P$ is a projective variety over the base scheme $S$. 
Therefore, we can use the results from \cite{QChen} to study $k$-log-canonical divisors.
\end{rem}

\subsection{Automorphisms}\label{auto}
An isomorphism between two log-curves $(C_1,\cM_{C_1}) \rightarrow (S_1,\cM_{S_1})$ and $(C_2,\cM_{C_2}) \rightarrow (S_2,\cM_{S_2})$ over the same base scheme $S=S_1=S_2$ is a pair $(\rho,\theta)$ such that
\begin{center}
\begin{tikzpicture}[scale=0.75]
\draw (0,0) node(A1){$(C_1,\cM_{C_1})$}
 (0,-2) node(A3){$(S_1,\cM_{S_1})$};
\draw[->,>=stealth] (A1) to[bend right=0] (A3);
\draw (4,0) node(B1){$(C_2,\cM_{C_2})$}
 (4,-2) node(B3){$(S_2,\cM_{S_2})$};
\draw[->,>=stealth] (B1) to[bend right=0] (B3);
\draw[->,>=stealth] (A1) to[bend right=0] (B1);
\draw[->,>=stealth] (A3) to[bend right=0] (B3);
\draw (2+0.2,+0.2) node[left]{$\rho$}
      (2,-1) node{$\circlearrowleft$}
      (2+0.2,-2+0.2) node[left]{$\theta$};
\end{tikzpicture}
\end{center}
where $\theta \colon (S_1,\cM_{S_1}) \rightarrow (S_2,\cM_{S_2})$ and $\rho \colon (C_1,\cM_{C_1}) \rightarrow (C_2,\cM_{C_2})$ are isomorphisms of log-schemes and the induced map $\underline{\theta} \colon S_1 \rightarrow S_2$ is the identity map $S_1=S_2$.
Therefore, it is an isomorphism of marked curves together with an isomorphism of log-structures on the base scheme $S$:
\begin{center}
\begin{tikzpicture}[scale=0.75]
\draw (-1,0) node(A1){$(C_1,\left\lbrace \sigma_i \right\rbrace)$}
 (3,0) node(A2){$(C_2,\left\lbrace \sigma'_i \right\rbrace)$}
 (1,-2) node(A3){$S$};
\draw (1,0) node[above]{$\simeq$};
\draw (1,0.3) node[above]{$\rho$};
\draw[->,>=stealth] (A1) to[bend right=0] (A2);
\draw[->,>=stealth] (A1) to[bend right=0] (A3);
\draw[->,>=stealth] (A2) to[bend right=0] (A3);
\draw     (1,-1) node{$\circlearrowright$};
\draw (1+5+0,-2) node(B1){$\cM_{S_2}$}
 (1+5+3,-2) node(B2){$\cM_{S_1}$}
 (1+5+3,0) node(B3){$\cM^{C_1/S}_S$}
 (1+5,0) node(B4){$\cM^{C_2/S}_S$};
\draw (1+5+1.5,-2) node[below]{$\simeq$};
\draw (1+5+1.5,-2.3) node[below]{$\theta$};
%\draw (1+5+1.5,0) node[above]{$\rho$};
\draw[->,>=stealth] (B1) to[bend right=0] (B2);
\draw[->,>=stealth] (B4) to[bend right=0] (B3);
\draw[->,>=stealth] (B3) to[bend right=0] (B2);
\draw[->,>=stealth] (B4) to[bend right=0] (B1);
\draw     (1+5+1.5,-1) node{$\circlearrowleft$};
\end{tikzpicture}
\end{center}

Observe that the isomorphism $\rho$ leads to natural isomorphisms
\begin{equation*}
\omega_{C_1/S} \simeq \rho^*\omega_{C_2/S} \quad \textrm{and} \quad \cO_{C_1} \simeq \rho^*\cO_{C_2},
\end{equation*}
so that it induces an isomorphism $\rho^{\textrm{ind}}$
\begin{center}
\begin{tikzpicture}[scale=0.75]
\draw (0,0) node(A1){$P_1$}
 (3,0) node(A2){$P_2$}
 (0,-2) node(A3){$C_1$}
 (3,-2) node(A4){$C_2$};
\draw (1.5,0) node[above]{$\simeq$};
\draw (1.5,0.3) node[above]{$\rho^{\textrm{ind}}$}; 
\draw (1.5,-2) node[below]{$\simeq$};
\draw (1.5,-2.3) node[below]{$\rho$};
\draw[->,>=stealth] (A1) to[bend right=0] (A2);
\draw[->,>=stealth] (A1) to[bend right=0] (A3);
\draw[->,>=stealth] (A2) to[bend right=0] (A4);
\draw[->,>=stealth] (A3) to[bend right=0] (A4);
\draw     (1.5,-1) node{$\square$};
\end{tikzpicture}
\end{center}
respecting (by pull-back) the sections $0$ and $\infty$ and thus the log-structures
$$\cM^{0,\infty}_{P_1} = \rho^{\textrm{ind}~*} (\cM^{0,\infty}_{P_2}).$$

\begin{dfn}\label{auto1}
An isomorphism between two $k$-log-canonical divisors $\xi_1$ and $\xi_2$ over the same base scheme $S$ is an isomorphism of log-curves $(\rho, \theta)$  such that
$$f_1 = \rho^{\textrm{ind}~-1} \circ f_2 \circ \rho.$$
We denote by $\mathcal{A}ut_S(\xi)$ the functor of automorphisms of $\xi$ over $S$, and by $\mathcal{A}ut_S(\underline{\xi})$ the induced functor of automorphisms of the schematic map $\underline{\xi}$ over $S$.
\end{dfn}

\begin{rem}\label{DFaut}
Following Remark \ref{DF}, a $k$-log-canonical divisor $\xi$ over a base scheme $S$ is a particular example of a log-map from a family of log-curves to the log-target $(P,\cM^{0,\infty})$, which is a projective Deligne--Faltings pair over the scheme $S$, hence an object of the category $\mathcal{L}\cM_{g,n}((P,\cM^{0,\infty})/S)(S)$ considered in \cite[Definition 2.1.5]{QChen}.
However, the automorphism group of a $k$-log-canonical divisor $\xi$ over a base scheme $S$ depends on whether we consider $\xi$ as a log-map or as a log-section.
The reason is that an automorphism as a log-map is a pair $(\rho,\theta)$ such that
$$f = f \circ \rho,$$
using notations of Definition \ref{auto1}. Therefore, at the scheme level and taking the composition with the projection $p \colon P \to C$, we get
$\mathrm{id} = \rho$, which is false in general for the group $\mathcal{A}ut_S(\xi)$.
Moreover, note that an automorphism of $\xi$ as a log-section is just the data of an automorphism $\rho$ of marked curves satisfying the relation
$$\rho^{\textrm{ind}} \circ f = f \circ \rho$$
on schemes, so that we have the inclusion $\mathcal{A}ut_S(\xi) \subset \mathcal{A}ut_S(C)$ inside the automorphisms of usual families of marked curves.\end{rem}

\subsection{Minimality and stability conditions}\label{MinStab}
By Remark \ref{DF}, we can use the same notion of minimality as for log-maps, see \cite[Definition 3.5.1]{QChen}.
Recall that it is valid, since on any geometric point $s = \mathrm{Spec}(k)$, the log-scheme $(P_s,\cM_s^{0,\infty})$ is a Deligne--Faltings pair of rank $1$ and the scheme $P_s$ is a projective variety.
The notion for stability is similar to \cite[Definition 3.6.1]{QChen} and will be treated at the end of this section.

\begin{dfn}
A $k$-log-canonical divisor $\xi$ over a scheme $S$ is minimal if, for every geometric point $s \in S$, the induced log-section 
\begin{center}
\begin{tikzpicture}[scale=1]
\draw (0,0) node(A1){$(C_s,\cM_{C_s})$}
 (4,0) node(A2){$(P_s,\cM_{P_s})$}
 (2,-1) node(A3){$(s,\cM_s)$}
 (2,0) node[above]{$f_s$};
 
\draw[->,>=stealth] (A1) to[bend right=0] (A2);
\draw[->,>=stealth] (A1) to[bend right=0] (A3);
\draw[->,>=stealth] (A2) to[bend right=0] (A3);
\end{tikzpicture}
\end{center}
on the fiber is a minimal log-map according to \cite[Definition 3.5.1]{QChen}.
\end{dfn}

\begin{rem}
Minimality roughly means that we keep the log-structure $\cM_s$ on the geometric point $s$ as minimal as possible.
Note that we already had a condition on the log-structure $\cM_S$ for a curve to be prestable: the log-structure $\cM_S$ is fine and saturated.
\end{rem}

Let us recall the minimality condition by describing a $k$-log-canonical divisor in more details.
Fix a $k$-log-canonical divisor
\begin{equation*}
\xi = (\pi \colon C \rightarrow S, (\sigma_1,\dotsc,\sigma_n), \cM_S^{C/S} \rightarrow \cM_S ~ ; ~~  C \xrightarrow{f} P, f^*\cM^{0,\infty} \xrightarrow{f^\flat} \cM_C)
\end{equation*}
over a geometric point $S=\mathrm{Spec}(k)$, where the log-structure $\cM_S$ is fine and saturated, and take a geometric point $x \in C$.
On one hand, the stalk $f^*\overline{\cM}_x^{0,\infty}$ of the characteristic sheaf is
\begin{equation*}
f^*\overline{\cM}_x^{0,\infty} = \left\lbrace
\begin{array}{ll}
\mathbb{N} & \textrm{if } f(x) \in \left\lbrace 0,\infty \right\rbrace, \\
0 & \textrm{otherwise;}
\end{array}
\right.
\end{equation*}
denote by $\delta$ a generator of this monoid.
On the other hand, by \cite[Theorem 1.3]{Kato}, the stalk $\overline{\cM}_{C,x}$ on $x \in C$ is
\begin{equation*}
\overline{\cM}_{C,x} = \left\lbrace
\begin{array}{ll}
\pi^*\cM_S \oplus \mathbb{N} & \textrm{if } x \in \left\lbrace \sigma_1,\dotsc,\sigma_n \right\rbrace, \\
\pi^*\cM_S \oplus \mathbb{Z} & \textrm{if } x \textrm{ is a node}, \\
\pi^*\cM_S & \textrm{otherwise.}
\end{array}
\right.
\end{equation*}
At a marked point $x=\sigma_i$, we denote by $\epsilon_i$ the generator of $\mathbb{N}$.
At a node $p$, we have two generators $\epsilon_{p'}$ and $\epsilon_{p''}$ of $\mathbb{Z}$ corresponding to the choice of a branch at the node.
There is also a special element $e_p \in \pi^* \cM_S$ smoothing the node $p$ and satisfying
$$e_p = \epsilon_{p'} + \epsilon_{p''}.$$
Therefore, at the level of characteristics and on the stalk of $x \in C$, the morphism of log-structures $f^\flat$ is
$$\overline{f^\flat}_x(\delta)=e_x + \left\lbrace
\begin{array}{lll}
c_i \epsilon_i & \textrm{if } x=\sigma_i, & \textrm{ with } c_i \in \mathbb{N}, \\
c_{p'} \epsilon_{p'} & \textrm{if } x \textrm{ is a node } p, & \textrm{ with } c_{p'} \in \mathbb{Z}, \\
0 & \textrm{otherwise,}& 
\end{array}
\right.$$
with $e_x \in \pi^* \cM_S$.

\begin{rem}
By \cite[Lemma 3.5]{Olsson2}, the element $e_x \in \pi^* \cM_S$ is  independent of the choice of a smooth point $x \in C_v$, for a given irreducible component $C_v$ of the curve $C$; it is called the degeneracy of the component $C_v$ and it is denoted by $e_v$.
When $x$ is a node $p$, then the element $e_x$ in the previous equation corresponds to the degeneracy of the irreducible component corresponding of the chosen branch $p'$ at the node.
From the equation $e_p = \epsilon_{p'} + \epsilon_{p''}$, we see that 
$$e_x + c_{p'} \epsilon_{p'} = e_x + c_{p'} e_p - c_{p'} \epsilon_{p''} = e_x + c_{p''} \epsilon_{p''},$$
so that we get
$$c_{p'} + c_{p''} = 0 \in \mathbb{Z}$$
and the degeneracy on the component corresponding to the other branch $p''$ is
$$e_x + c_{p'} e_p.$$
\end{rem}

\begin{rem}\label{specialpoints}
Obviously, when the element $\delta$ is zero, then its image is zero.
Conversely, from the definition of a log-structure, we know that $\overline{f^\flat}_x(\delta)$ cannot be zero if $\delta$ is non-zero.
Therefore, the degeneracy $e_v$ of an irreducible component $C_v$ is non-zero if and only if the schematic map $f_v \colon C_v \to P$ is the constant map equal to one of the sections $0$ or $\infty$.
Furthermore, if the degeneracy $e_v$ is zero, then the schematic map $f$ cannot be equal to $0$ or to $\infty$ on $C_v$ outside of the marked points and of the nodes.
In that case, the integer $c_i$ is exactly the contact order of $f(C_v)$ with the $0$ or $\infty$ section at the marked point $\sigma_i$.
\end{rem}

\begin{rem}
At a node $p$ where two components $C_{v'}$ and $C_{v''}$ meet, if the integer $c_{p'}$ is non-zero, then the schematic section $f$ has to meet the divisor $0$ or the divisor $\infty$ at $p$.
Assuming $e_{v'} = 0$, i.e.~the schematic map $f_{|C_{v'}}$ is not the $0$ or the $\infty$-section, then the integer $c_{v'}$ is the contact order of $f_{|C_{v'}}$ with either $0$ or $\infty$ at $p$, so it needs to be positive.
Therefore, the integer $c_{p''}$ is negative and $e_{v''} \neq 0$, i.e.~the schematic map $f_{|C_{v''}}$ is the $0$ or the $\infty$ section.
\end{rem}

Now, we attach the following data, called a marked graph, to the $k$-log-canonical divisor $\xi$:
\begin{itemize}
\item the dual graph $G_\xi$ of the source curve,
\item an element $\mathbf{s}_v \in \{+,-,0\}$, called a sign, for each vertex $v$
\item an element $\mathbf{s}_h \in \{+,-,0\}$ for each half-edge $h$,
\item the integer $c_h$, called the contact order, for each half-edge $h$,
\end{itemize}
where the sign $\mathbf{s}_v$ of a vertex $v$ corresponding to an irreducible component $C_v$ is
$$\mathbf{s}_v = \left\lbrace \begin{array}{ll}
+ & \textrm{if the schematic map $f$ is identically $0$ on $C_v$,} \\
- & \textrm{if the schematic map $f$ is identically $\infty$ on $C_v$,} \\
0 & \textrm{otherwise, i.e.~if $e_v=0$,} \\
\end{array}
\right.$$
and the sign $\mathbf{s}_h$ of a half-edge $h$ corresponding to a marking or a node $x$ is
$$\mathbf{s}_h = \left\lbrace \begin{array}{ll}
+ & \textrm{if $f(x)=0$,} \\
- & \textrm{if $f(x)=\infty$,} \\
0 & \textrm{otherwise, i.e.~if $c_h=0$.} \\
\end{array}
\right.$$

We observe the following compatibility rules:
\begin{itemize}
\item[-]for any leg $h$ (half-edge corresponding to a marked point), we have $c_h \in \NN$,
\item[-]if two half-edges $h,h'$ form an edge, then
$$\mathbf{s}_h=\mathbf{s}_{h'} \quad \textrm{and} \quad c_h+c_{h'}=0,$$
\item[-]for every half-edge $h$ incident to a vertex $v$, we have
\begin{eqnarray*}
\mathbf{s}_v \neq 0 & \implies & \mathbf{s}_h=\mathbf{s}_v, \\
c_h <0 & \implies & e_v \neq 0,
\end{eqnarray*}
\item[-]for any vertex $v$ and half-edge $h$, we have
$$\mathbf{s}_v=0 \iff e_v=0 \quad \textrm{and} \quad \mathbf{s}_h=0 \iff c_h=0,$$
but there is no relation between a non-zero $\mathbf{s}_h$ and the sign of a non-zero contact order $c_h$
\end{itemize}

Associated to the marked graph $G_\xi$ of $\xi$, we consider the monoid
$$M(G_\xi) := \langle e'_v,e'_l ~~ | ~~ v \in V(G_\xi), l \in E(G_\xi) \rangle / \langle R_v,R_l ~~ | ~~ v \in V(G_\xi), l \in E(G_\xi) \rangle,$$
where $V(G_\xi)$ is the set of vertices of the graph $G_\xi$, $E(G_\xi)$ is the set of edges, and the relations $R_v$ and $R_l$ are
\begin{equation*}
\begin{array}{ll}
R_v: e'_v = 0 & \textrm{if $\mathbf{s}_v=0$, i.e.~$e_v=0$,} \\
R_l: e'_{v_2} = e'_{v_1} + c_h e'_l & \textrm{if $l$ is the edge linking the vertices $v_1$ and $v_2$,} \\
& \textrm{$h$ is the half-edge attached to the vertex $v_1$,}\\
& \textrm{and $c_h \geq 0$.}
\end{array}
\end{equation*}
Then, we consider the associated group $M(G_\xi)^\textrm{gp}$, its torsion part $T(G_\xi)$, and $\overline{\cM}(G_\xi)$ the saturation of the image of $M(G_\xi)$ inside $M(G_\xi)^\textrm{gp}/T(G_\xi)$.
The monoid $\overline{\cM}(G_\xi)$ is called the monoid associated to the marked graph $G_\xi$.

By \cite[Proposition 3.4.2]{QChen}, since the monoid $\cM_S$ of the $k$-log-canonical divisor $\xi$ is fine and saturated, then we have a morphism of monoids
$$\overline{\cM}(G_\xi) \to \overline{\cM}_S,$$
sending the generator $e'_v$ to the degeneracy $e_v$ and the generator $e'_l$ to the element $e_l \in \pi^*\overline{\cM}_S$ smoothing the node $l$.

\begin{dfn}
We say that the $k$-log-canonical divisor $\xi$ is minimal if the monoid $\cM_S$ is fine and saturated and if the above map
$$\overline{\cM}(G_\xi) \simeq \overline{\cM}_S$$
is an isomorphism of monoids.
\end{dfn}

\begin{rem}
By \cite[Propositions 3.5.2]{QChen}, minimality is an open condition.
\end{rem}

Now, we define the stability condition for $k$-log-canonical divisors, so that they have no non-trivial infinitesimal automorphisms.

\begin{dfn}\label{stabledef}
A $k$-log-canonical divisor $\xi$ over a scheme $S$ is stable if, for every geometric point $s \in S$, the induced schematic morphism
$$f_s \colon C_s \to P_s$$
is stable as a section, i.e.~if
the group of automorphisms $\mathcal{A}ut_s(\underline{\xi_s})(s)$ is finite\footnote{We will see in Lemma \ref{repres} that the group $\mathcal{A}ut_s(\xi_s)(s)$ is also finite in that case.}.
\end{dfn}

Stability is a condition on the schematic map, viewed as a section. We have seen in Remark \ref{DFaut} that it is different from the usual stability condition of maps, for which the schematic morphism $f_s \colon C_s \to P_s$ has no non-trivial automorphisms and thus is always stable as a map.
However, it is not always stable as a section, as we see in the following lemma.

\begin{lem}\label{stable}
Let $\xi_s$ be a $k$-canonical divisor over a geometric point $s$.
The automorphism group $\mathcal{A}ut_s(\underline{\xi_s})(s)$ is finite if and only if
\begin{itemize}
\item the source curve $C_s$ has no unstable irreducible components of type $(\PP^1,0)$,
\item the map $f_s$ does not contract irreducible components of type $(\PP^1,0,\infty)$.
\end{itemize}
These two conditions are open:
if we have a $k$-log-canonical divisor $\xi$ over a scheme $S$ such that $\xi_s$ is stable for some geometric point $s \in S$, then there exists an \'etale neighborhood of $s$ on which the restriction of $\xi$ is stable.
\end{lem}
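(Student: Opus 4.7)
The plan is to analyze the automorphism group $\mathcal{A}ut_s(\underline{\xi_s})(s)$ by decomposing it along the irreducible components of $C_s$, using the inclusion $\mathcal{A}ut_s(\underline{\xi_s})(s) \subset \mathcal{A}ut_s(C_s)$ from Remark \ref{DFaut}. The key observation is that any $\rho \in \mathcal{A}ut_s(\underline{\xi_s})(s)$ restricts to an automorphism of each irreducible component $C_v$ of $C_s$ preserving the nodes and markings, and the section-compatibility $\rho^{\textrm{ind}} \circ f = f \circ \rho$ is a componentwise condition. Up to the finite combinatorial data of how $\rho$ permutes components, nodes, and markings, finiteness therefore reduces to finiteness of the restricted group on each $C_v$.

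The next step is to split into two cases according to the sign $\mathbf{s}_v$ of the marked graph $G_\xi$. If $\mathbf{s}_v \in \{+,-\}$, i.e.~$f$ is identically equal to $0$ or $\infty$ on $C_v$, then since the sections $0$ and $\infty$ of the $\PP^1$-bundle $P = \mathrm{Proj}(\cO \oplus \omega_\pi^k)$ are canonical, $\rho^{\textrm{ind}}$ fixes them automatically and the section-compatibility is vacuous. Hence $\mathcal{A}ut(\xi|_{C_v}) = \mathrm{Aut}(C_v,\textrm{special points})$, which is finite precisely when $C_v$ is stable as a pointed curve in the usual sense; the only failures among irreducible components of a prestable log-curve of type $(g,n)$ are unstable rational components, yielding the first exclusion. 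If instead $\mathbf{s}_v = 0$, then by Remark \ref{specialpoints} the map $f|_{C_v}$ is a genuine section meeting $0$ and $\infty$ only at the special points, with multiplicities prescribed by the contact orders $c_h$. The condition $\rho^{\textrm{ind}} \circ f = f \circ \rho$ then forces $\rho$ to preserve these intersection points as additional ``markings''; once these contact points are appended to the list of special points of $C_v$, the component becomes stable and its restricted automorphism group is finite. The remaining bad case is exactly an unstable rational component on which $f$ contracts a ``rubber'' $(\PP^1,0,\infty)$ situation: a $\PP^1$ with no extra markings whose two special points are meant to carry the $0$ and $\infty$ data, and on which $f$ is constant so that $\rho^{\textrm{ind}}$ imposes nothing. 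This yields the second exclusion.

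Putting the two cases together characterizes finiteness as the conjunction of the two conditions of the lemma. For the openness claim, I would argue that both conditions depend only on the underlying schematic data $\underline{\xi}$: the existence of an unstable $(\PP^1,0)$ component is a closed condition on the dual graph of the family of prestable curves $C \to S$, and the locus where $f$ contracts a given component of type $(\PP^1,0,\infty)$ is cut out by the vanishing of the non-trivial part of the defining section $s \in H^0(C,\cL \oplus \cL \otimes \omega_\pi^{-k})$ on that component, hence closed. Their complements are open, so the stable locus is open in $\cD_{g,n}^{(k)}$.

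I expect the main obstacle to be the second case above: verifying in detail that when $f|_{C_v}$ is a non-constant section of $P|_{C_v}$ on a potentially unstable rational component, the contact conditions at the special points genuinely suffice to cut $\mathrm{Aut}(C_v)$ down to a finite subgroup (as opposed to leaving a positive-dimensional stabilizer coming from the $\CC^*$-action on the fibers of $P \to C$). Ruling this out requires a careful tracking of the degrees and contact orders prescribed by the marked graph data, essentially a Riemann--Hurwitz-type count, and isolating the rubber $(\PP^1,0,\infty)$ configuration as the unique exception.
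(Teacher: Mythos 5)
Your overall strategy — reduce to irreducible components via the inclusion $\mathcal{A}ut_s(\underline{\xi_s})(s) \subset \mathcal{A}ut_s(C_s)$ and analyze the two possible unstable rational types — matches the paper. However, the case split by the sign $\mathbf{s}_v$ leaves a genuine gap that the explicit computation in the paper is there to close.

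The problem is the sub-case of a $(\PP^1,0)$ component with $\mathbf{s}_v = 0$, which you do not address. For $k\geq 1$ this case really occurs: the compatibility $\mathbf{c}_h = k(2g_v-2+n_v) = -k$ forces a contact order $k$ at the unique special point $\sigma$ with sign $-$, so $f|_{C_v}$ is a non-constant section meeting $[\infty]$ only at $\sigma$ with multiplicity $k$. Your argument for $\mathbf{s}_v = 0$ says to append the contact points to the markings to stabilize $C_v$, but here the only contact point is $\sigma$ itself, so appending it changes nothing and $C_v$ stays a $(\PP^1,0)$. Your proposal would therefore wrongly predict a finite automorphism group in this case. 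The paper resolves it by an explicit coordinate computation: because $P|_{C_v}\simeq \PP(\cO(k)\oplus\cO)$ and every intersection of $f$ with $[0]\cup[\infty]$ must lie at $\sigma$, the section is forced to be of the form $[x:y]\mapsto[x:y;\alpha y^k:\beta]$, hence "constant" on the affine chart away from $\sigma$; this makes it invariant under the full (infinite) automorphism group of $(\PP^1,0)$ and puts the component under the first exclusion regardless of sign.

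Separately, your treatment of $(\PP^1,0,\infty)$ with $\mathbf{s}_v=0$ and nonzero contact order $c$ also relies on the appending-markings heuristic, which again adds nothing since the contact points coincide with the two special points. You flag this as an obstacle and gesture at a Riemann--Hurwitz count, but the right tool is simpler: for the section $[x:y]\mapsto\bigl([x:y],[\alpha x^c:\beta y^c]\bigr)$ an automorphism $[x:y]\mapsto[\mu x:\nu y]$ of $(\PP^1,0,\infty)$ preserves $f$ iff $\mu^c=\nu^c$, giving exactly $\mu_c$ (finite for $c>0$, infinite for $c=0$, i.e.\ contraction). Without these two explicit calculations the characterization of finiteness is not established. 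Your openness argument is fine and in the same spirit as the paper's.
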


\begin{proof}
Since any automorphism $\rho \in \mathcal{A}ut_s(\underline{\xi_s})(s)$ is in particular an automorphism of marked curves, then the group $\mathcal{A}ut_s(\underline{\xi_s})(s)$ is finite when the source curves are stable.
Moreover, we can work independently on each irreducible component $C_v$ of the source curve $C$.
Therefore, we have only two cases to consider:
\begin{itemize}
\item the irreducible component $C_v$ is of type $(\PP^1,0)$, i.e.~one special point,
\item the irreducible component $C_v$ is of type $(\PP^1,0,\infty)$, i.e.~two special points.
\end{itemize}

In the first case, we have
$$P := \PP(\cO_{\PP^1} \oplus \cO_{\PP^1}(-k)) \simeq \PP(\cO_{\PP^1}(k) \oplus \cO_{\PP^1}).$$
Moreover, we have seen in Remark \ref{specialpoints} that all the preimages of the sections $0$ and $\infty$ are special points, so that a section $f \colon \PP^1 \to P$ is necessarily of the form
$$\left[x : y \right] \mapsto \left[x : y ; \alpha y^k : \beta \right]$$
in homogeneous coordinates, where $\left[\alpha : \beta \right] \in \PP^1$.
But we have
$$\left[x : y ; \alpha y^k : \beta \right] = \left[\lambda x : \lambda y ; \lambda^k \alpha y^k : \beta \right], \quad \forall \lambda \in \CC^*,$$
so that the section $f$ is constant in the open subset for which $y \neq 0$.
Consequently, every automorphism of $(\PP^1,0)$ preserves the section $f$, and the group $\mathcal{A}ut_s(\underline{\xi_s})(s)$ is infinite.

In the second case, we have $P \simeq \PP^1 \times \PP^1$ and a section $f \colon \PP^1 \to \PP^1 \times \PP^1$ is necessarily of the form
$$f \colon \left[x : y \right] \mapsto \left[x : y \right],\left[\alpha x^c : \beta y^c \right],$$
where $c \in \NN$ is the contact order at the special points and $\left[\alpha:\beta\right] \in \PP^1$.
An automorphism of $\Phi$ of $(\PP^1,0,\infty)$ is of the form
$\left[x : y \right] \mapsto \left[\mu x : \nu y \right]$, with $\left[\mu:\nu\right] \in \PP^1$,
and sends the section $f$ to
$$\Phi^*f \colon \left[x : y \right] \mapsto \left[x : y \right],\left[\mu^c \alpha x^c : \nu^c \beta y^c \right].$$
Therefore, we have $f=\Phi^*f$ if and only if $\mu^c=\nu^c$ or $c=0$.
Consequently, when $f$ is a contraction, then every automorphism of $(\PP^1,0,\infty)$ is in $\mathcal{A}ut_s(\underline{\xi_s})(s)$.
Otherwise, there are at most $c$ automorphisms.

Consider a family of $k$-log-canonical divisors $\xi$ over a base scheme $S$, such that $\xi_s$ is stable on a geometric point $s$.
Then, there is an \'etale neighborhood of $s$ where the source curve and the schematic map are not more degenerate than $C_s$ and $f_s$, so that no unstable components of type $(\PP^1,0)$ and no contractions can appear.
\end{proof}

\subsection{The topological type of a $k$-log-canonical divisor}\label{topoltype}
Recall from \cite[Convention 3.6.3]{QChen} that the log-map
\begin{equation*}
\xi = (\pi \colon C \rightarrow S, (\sigma_1,\dotsc,\sigma_n), \cM_S^{C/S} \rightarrow \cM_S ~ ; ~~  C \xrightarrow{f} P, f^*\cM^{0,\infty} \xrightarrow{f^\flat} \cM_C)
\end{equation*}
over a geometric point $S = \textrm{Spec}(k)$ is of topological type
$$\Gamma=(g,n,\beta,\vec{c})$$
where $g$ is the genus of the curve, $n$ is the number of marked points, $\beta$ is the homology class of the image
$$\beta = f(C) \in H_2(P,\ZZ),$$
and $\vec{c}=(c_1,\dotsc,c_n) \in \NN^n$ is the contact order at the marked points between the image $f(C)$ and the divisor $[0]+[\infty]$.

We see that the contact orders $c_1,\dotsc,c_n \in \NN$ come in two flavors, depending on whether the image $f(C)$ meets the divisor $[0]$ or the divisor $[\infty]$ at the marked points.
Thus, we introduce the notation
$$\mathbf{c}_i := \mathbf{s}_i \cdot c_i.$$
Accordingly, for each half-edge $h$ of the dual graph of $C$, we introduce the notation
$$\mathbf{c}_h := \mathbf{s}_h \cdot c_h.$$

\begin{rem}\label{homol}
The homology class $\beta$ of the image $f(C)$ depends only on the schematic map.
It satisfies
\begin{equation*}\label{condsec}
\beta \cdot F = 1,
\end{equation*}
where $F$ is a fiber of the projection $p \colon P \to C$.
In fact, this equation on $\beta$ characterizes the fact that the morphism $f$ is a section of the projective bundle, up to an isomorphism of the curve $C$.
%%%%%%%%%%% Indeed, when the equation \eqref{condsec} is satisfied, then the map $p \colon f(C) \to C$ is finite of degree $1$ and so is an isomorphism.
Furthermore, on a non-degenerate irreducible component $C_v$, i.e.~with $e_v = 0$, the restriction $\beta_v$ of $\beta$ to $C_v$ satisfies
$$\beta \cdot [0] = \sum_{h \in I^+_v} c_h p_h \quad \textrm{and} \quad \beta \cdot [\infty] = \sum_{h \in I^-_v} c_h p_h$$
where $I^+_v$ (resp.~$I^-_v$) is the set of half-edges $h$ incident to the vertex $v$ with sign $\mathbf{s}_h =+$ (resp.~with sign $\mathbf{s}_h =-$), and $p_h$ denotes the marking or the node corresponding to $h$.
We recall that every integer $c_h$ has to be non-negative in order for the component $C_v$ to be non-degenerate.
From the three equations above on the homology class $\beta$, and from the relation
$$\cO_{C_v}([0]-[\infty]) \simeq p^*\omega_{C_v}^k,$$
we get
\begin{equation}\label{multi}
\sum_{h \in I^+_v \cup I^-_v} \mathbf{c}_h = \sum_{h \in I^+_v} c_h - \sum_{h \in I^-_v} c_h = k (2 g_v -2 + n_v),
\end{equation}
where $g_v$ is the genus of $C_v$ and $n_v$ the number of half-edges incident to $v$.
We will see a similar formula \eqref{compat} on degenerate components as well.
In that case, the only information about the schematic section $f$ is given by its sign
$$\beta = \left\lbrace \begin{array}{ll}
[0] & \textrm{if $\mathbf{s}_v=+$,} \\
\left[ \infty \right] & \textrm{if $\mathbf{s}_v=-$.}
\end{array}
\right.$$
As a consequence, we see that the homology class $\beta$ is completely determined by the marked graph $G_\xi$.
\end{rem}

\begin{rem}\label{finitemarkedgraph}
By \cite[Proposition 3.7.5]{QChen}, there are finitely many minimal and stable $k$-log-canonical divisors over a geometric point with fixed schematic map and marked graph.
%%%%%%%%%%%% In fact, up to a $\CC^*$-automorphism, we will see in Section \ref{descr} that there is at most one minimal and stable $k$-log-canonical divisor with a given marked graph.
Moreover, once we fix the values $c_1,\dotsc,c_n$ of the contact orders at the markings, then we have finitely many possible marked graphs.
%%%%%%%%%%%%%%%% REFERENCE
\end{rem}

\begin{dfn}
A minimal and stable $k$-log-canonical divisor $\xi$ over a scheme $S$ is of topological type $\Gamma=(g,n,\vec{\mathbf{c}})$ if, for every geometric point $s \in S$, the induced log-section 
\begin{center}
\begin{tikzpicture}[scale=1]
\draw (0,0) node(A1){$(C_s,\cM_{C_s})$}
 (4,0) node(A2){$(P_s,\cM_{P_s})$}
 (2,-1) node(A3){$(s,\cM_s)$}
 (2,0) node[above]{$f_s$};
 
\draw[->,>=stealth] (A1) to[bend right=0] (A2);
\draw[->,>=stealth] (A1) to[bend right=0] (A3);
\draw[->,>=stealth] (A2) to[bend right=0] (A3);
\end{tikzpicture}
\end{center}
on the fiber is a genus-$g$ curve with $n$ markings $\sigma_1,\dotsc,\sigma_n$, with the contact order $|\mathbf{c}_i|$ at $\sigma_i$, and with $f(\sigma_i)=0$ if $\mathbf{c}_i>0$ and $f(\sigma_i)=\infty$ if $\mathbf{c}_i<0$.
\end{dfn}

\begin{rem}\label{logmap}
Following Remarks \ref{DF} and \ref{DFaut}, a $k$-log-canonical divisor $\xi$ over a base scheme $S$ is an object of the category $\mathcal{L}\cM_{g,n}((P,\cM^{0,\infty})/S)(S)$ considered in \cite[Definition 2.1.5]{QChen}, but the automorphisms are not the same.
Nevertheless, for any geometric point $s \in S$, the $k$-log-canonical divisor $\xi_s$ is a log-map over $s$ to the target $P_s \subset P_S$, and thus to $P_S$, the projective bundle associated to $\xi$.
Moreover, the condition for the schematic map $f_s \colon C_s \to P_s$ to be a section is an open and closed condition encoded in the homology class $\beta$ of the image, as explained in Remark \ref{homol}.
Furthermore, the minimality conditions for $k$-log-canonical divisors and for log-maps are the same, and the stability condition as a map is vacuous.
Therefore, a minimal and stable $k$-log-canonical divisor over a base scheme $S$ (and without considering automorphisms) corresponds to a $S$-point of the stack $\mathcal{K}_\Gamma(P,\cM^{0,\infty})$, introduced in \cite[Definition 3.6.5]{QChen}, for some topological type $\Gamma$.
\end{rem}

\begin{dfn}
For any topological type $\Gamma=(g,n,\vec{\mathbf{c}})$, the moduli space $\mathcal{D}_\Gamma^{(k)}$ is the fibered category over the category of schemes, for which the groupoid of $S$-points is the set of minimal and stable $k$-log-canonical divisors of topological type $\Gamma$ over $S$, together with automorphisms $\mathcal{A}ut_S(\xi)$.
\end{dfn}

We see from equations \eqref{multi} and \eqref{compat} that a necessary condition for the moduli space $\mathcal{D}_\Gamma^{(k)}$ to be non-empty is
$$\mathbf{c}_1+\dotsb+\mathbf{c}_n= k (2 g - 2 + n).$$

\begin{dfn}
For any topological type $\Gamma=(g,n,\vec{\mathbf{c}})$, the moduli space $D_\Gamma^{(k)}$ is the fibered category over the category of schemes obtained from $\mathcal{D}_\Gamma^{(k)}$ by forgetting the log-structures and keeping only the underlying schemes and morphisms, together with automorphisms $\mathcal{A}ut_S(\underline{\xi})$ of the underlying schematic map $\underline{\xi}$.
We denote the forgetful functor by
$$\mathrm{FL} \colon \mathcal{D}_\Gamma^{(k)} \to D_\Gamma^{(k)}.$$
\end{dfn}

\begin{lem}\label{repres}
For any $k$-log-canonical divisor $\xi$ over a geometric point $S$, the map of groups
$$\mathcal{A}ut_S(\xi)(S) \to \mathcal{A}ut_S(\underline{\xi})(S)$$
is an isomorphism.
\end{lem}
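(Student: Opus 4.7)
The plan is to construct a canonical two-sided inverse to the forgetful homomorphism $(\rho, \theta) \mapsto \rho$, exploiting the rigidity of the minimal log-structure on the geometric point $S$. Given $\rho \in \mathcal{A}ut_S(\underline{\xi})(S)$, I will consider the pull-back $\rho^{*}\xi$ of the log-data of $\xi$ along $\rho$: this is another minimal and stable $k$-log-canonical divisor sitting above the same underlying schematic data $\underline{\xi}$. Constructing $\theta$ will reduce to producing a canonical isomorphism $\xi \xrightarrow{\sim} \rho^{*}\xi$ over $\mathrm{id}_{\underline{\xi}}$; composing this with the tautological map $\rho^{*}\xi \to \xi$ induced by $\rho$ will yield the desired $(\rho, \theta) \in \mathcal{A}ut_S(\xi)(S)$.

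The crux will be identifying the marked graphs $G_\xi$ and $G_{\rho^{*}\xi}$. The signs $\mathbf{s}_v, \mathbf{s}_h$ and the contact orders $c_h$ at legs and at nodes adjacent to at least one non-degenerate component are determined by the schematic map (the latter as scheme-theoretic orders of vanishing), hence preserved by $\rho$. The remaining contact orders, at nodes between two degenerate components, are encoded in the presentation of the monoid $\overline{\cM}(G_\xi) \simeq \overline{\cM}_S$ through the relations $R_l \colon e'_{v_2} = e'_{v_1} + c_h\, e'_l$; since $\overline{\cM}_S$ is an invariant of $\xi$ that is unchanged by $\rho^{*}$, these contact orders must coincide with those of $G_\xi$, giving $G_{\rho^{*}\xi} = G_\xi$. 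The minimal log-structure $\cM_S$ is then determined, up to an isomorphism fixing $\cO_S^{*}$ pointwise, by its marked graph together with the stalk-wise formula $\overline{f^\flat}_x(\delta) = e_x + c_x \epsilon_x$, yielding the desired canonical isomorphism and hence $\theta$.

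Injectivity will follow from the same rigidity: any two lifts $(\rho, \theta_1)$ and $(\rho, \theta_2)$ differ by an automorphism of $\cM_S$ over $\mathrm{id}_S$ acting trivially both on $\cO_S^{*}$ and on the characteristic sheaf $\overline{\cM}_S$, which forces $\theta_1 = \theta_2$. I expect the main obstacle to lie in justifying the preservation of contact orders at degenerate-to-degenerate nodes, which relies on the combinatorial data of $G_\xi$ being rigidly encoded in the monoid $\overline{\cM}(G_\xi)$, so that a marked-graph automorphism induced by a scheme automorphism of $\underline{\xi}$ lifts to a monoid automorphism of $\overline{\cM}(G_\xi)$ and conversely. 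Once this is established, the remainder of the argument unfolds as a formal consequence of the functoriality of the minimal log-structure with respect to automorphisms of $\underline{\xi}$.
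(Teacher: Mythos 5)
Your approach is in the same spirit as the paper's: produce the inverse of the forgetful map by pulling back the log-data along $\rho$ and appealing to the rigidity of the minimal log-structure. Where you diverge is in how the rigidity is established. The paper simply reduces to the case $\rho = \mathrm{id}$ and cites \cite[Lemma 3.8.3]{QChen} for the uniqueness of the automorphism $\theta$; you instead try to re-derive this rigidity from the explicit combinatorics of the marked graph $G_\xi$ and the monoid $\overline{\cM}(G_\xi)$.

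That re-derivation has a concrete gap in the injectivity step. You claim that an automorphism of $\cM_S$ over $\mathrm{id}_S$ which fixes $\cO_S^*$ pointwise and induces the identity on $\overline{\cM}_S$ must itself be the identity. This is false. Over a geometric point $S = \mathrm{Spec}(\CC)$, such automorphisms are exactly the maps $m \mapsto \chi(\bar m)\cdot m$ for a monoid homomorphism $\chi \colon \overline{\cM}_S \to \CC^*$, and this group is nontrivial whenever $\overline{\cM}_S \neq 0$ --- which is precisely the interesting case (any degenerate component or any node with non-zero contact order gives $\overline{\cM}_S \neq 0$). What actually pins $\theta$ down is the constraint $f^\flat = \widetilde\theta \circ f^\flat$ together with compatibility with $\cM_S^{C/S} \to \cM_S$: the second forces $\chi(e_l) = 1$ on the node-smoothing parameters, and the first, via the stalk formula $\overline{f^\flat}_x(\delta) = e_v + c_x\epsilon_x$, forces $\chi(e_v) = 1$ on the degeneracies. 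Since these generate $\overline{\cM}_S$, one then gets $\chi = 1$ and $\theta_1 = \theta_2$. Without this input the asserted ``canonical isomorphism $\xi \xrightarrow{\sim} \rho^*\xi$'' is not canonical, and hence the inverse map you want to build is not well-defined --- the surjectivity and injectivity both rest on this missing uniqueness. In short: your construction of the inverse is the right idea, but the rigidity you invoke for $\cM_S$ is not a formal property of log-structures over a point; it genuinely requires using the map $f^\flat$, i.e., precisely the content of Chen's uniqueness lemma that the paper cites.
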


\begin{proof}
Take an element $\rho \in \mathcal{A}ut_S(\underline{\xi})(S)$, i.e.~an automorphism of the family of marked curves satisfying
$$\rho^{\textrm{ind}} \circ\underline{f} = \underline{f} \circ \rho,$$
where $\underline{f}$ is the schematic map from $\xi$.
We need to prove that there is exactly one element $(\rho,\theta) \in \mathcal{A}ut_S(\xi)(S)$, i.e.~an automorphism $\theta$ of  the log-structure $\cM_S$ satisfying
$$f^\flat = \widetilde{\theta} \circ f^\flat,$$
where the morphism $\widetilde{\theta} \colon \cM_C \to \cM_C$ is canonically induced by the morphism $\theta \colon \cM_S \to \cM_S$.

When $\rho$ is the identity, by Remark \ref{DFaut}, we are in the log-map case and, by \cite[Lemma 3.8.3]{QChen}, there is a unique $\theta_0$ satisfying above relation.
When the automorphism $\rho$ is not trivial, then $\rho$ induces an automorphism of $\cM_S^{C/S}$ and thus of $\cM_S$. Then, we take the composition with $\theta_0$.
\end{proof}

\section{Properties of the moduli space}
In this section, we prove the following theorem.

\begin{thm}\label{DMstack}
The moduli space $\mathcal{D}_\Gamma^{(k)}$ is a proper Deligne--Mumford stack.
Furthermore, the natural map $\mathrm{FL}$, forgetting the log-structures, is representable and finite.
Moreover, there exists a perfect obstruction theory, in the sense of \cite{BF}, on the moduli space $\mathcal{D}_\Gamma^{(k)}$, relative to the stack of prestable log-curves.
Hence it carries a virtual fundamental cycle $\left[\mathcal{D}_\Gamma^{(k)}\right]^\mathrm{vir}$.
\end{thm}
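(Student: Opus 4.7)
The overall plan is to use Remark \ref{logmap} to realize $\mathcal{D}_\Gamma^{(k)}$, up to the distinction between log-map and log-section automorphisms, as a locally closed substack of the stack $\mathcal{K}_\Gamma(P,\cM^{0,\infty})$ of minimal stable log-maps constructed in \cite[Definition 3.6.5]{QChen}, and to transfer the properties already established there (Deligne--Mumford, properness, perfect obstruction theory) to $\mathcal{D}_\Gamma^{(k)}$.

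Because the target $(P,\cM^{0,\infty}) = \mathrm{Proj}(\cO_C \oplus \omega_\pi^k)$ varies with the source curve, I would first work relative to the stack $\mathfrak{M}_{g,n}$ of prestable curves, over which the target forms a family of projective Deligne--Faltings pairs of rank one. Chen's construction then yields a proper Deligne--Mumford stack $\mathcal{K}_\Gamma(P/\mathfrak{M}_{g,n},\cM^{0,\infty})$ of minimal stable log-maps of topological type $\Gamma$. Next I would cut out the substack of $k$-log-canonical divisors: the condition $\beta \cdot F = 1$ on the homology class (see Remark \ref{homol}) is open and closed, the section condition $p \circ \underline{f} = \mathrm{id}_C$ at the level of underlying schemes is closed, and section-stability (Definition \ref{stabledef}) is open by the last statement of Lemma \ref{stable}. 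On geometric points this substack agrees with $\mathcal{D}_\Gamma^{(k)}$ as a set, but the automorphism group is that of log-maps; Lemma \ref{repres}, applied functorially, identifies this change of automorphism group with a clean rigidification and shows that $\mathcal{D}_\Gamma^{(k)}$ is itself a proper Deligne--Mumford stack.

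For representability and finiteness of $\mathrm{FL}$, Lemma \ref{repres} gives a canonical bijection on automorphisms of fibers, so $\mathrm{FL}$ has trivial relative stabilizers and is therefore representable. Remark \ref{finitemarkedgraph} shows that once the topological type $\vec{\mathbf{c}}$ is fixed there are only finitely many marked graphs, and for each there are only finitely many minimal log-enhancements over a given schematic section, so the fibers of $\mathrm{FL}$ are finite; combined with properness inherited from the log-map stack this gives that $\mathrm{FL}$ is finite.

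The relative perfect obstruction theory over $\mathfrak{M}_{g,n}$ is constructed exactly as in \cite[Section 4]{QChen}: one takes the natural morphism $(R\pi_* \underline{f}^* T^{\mathrm{log}}_{P/C})^\vee \to \mathbb{L}_{\mathcal{D}_\Gamma^{(k)}/\mathfrak{M}_{g,n}}$ induced by the deformation theory of the log-map $f$; since $(P,\cM^{0,\infty}) \to (C,\cM_C)$ is log-smooth and the relative log-tangent sheaf is a line bundle, this complex has perfect amplitude in $[-1,0]$, and the virtual fundamental class $[\mathcal{D}_\Gamma^{(k)}]^{\mathrm{vir}}$ is then obtained by the Behrend--Fantechi formalism \cite{BF}. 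The step I expect to be most delicate is the verification that the passage from log-map automorphisms to log-section automorphisms is compatible with both the stack structure and the obstruction theory, so that the construction of \cite{QChen} descends intact to $\mathcal{D}_\Gamma^{(k)}$; this is the only place where $k$-log-canonical divisors genuinely deviate from Chen's general log-map framework, and it reduces to a careful globalization of Lemma \ref{repres}.
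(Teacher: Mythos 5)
Your broad strategy matches the paper's: work relative to $\mathfrak{M}_{g,n}$, use Chen's moduli stack of minimal stable log-maps to the Deligne--Faltings pair $(P,\cM^{0,\infty})$, cut out the section/stability loci, account for the automorphism discrepancy, and build the perfect obstruction theory from the log-cotangent complex. However, two steps in your argument do not go through as written.

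First, properness. You cut $\cD_\Gamma^{(k)}$ out of the proper log-map stack by, in part, an \emph{open} condition (section-stability, via the last statement of Lemma \ref{stable}), and then assert that $\cD_\Gamma^{(k)}$ is proper. An open substack of a proper stack is not proper, so properness of $\cD_\Gamma^{(k)}$ does not simply transfer from $\mathcal{K}_\Gamma$. The paper handles this with a separate weak valuative criterion argument (Lemma \ref{wvc}): given a stable $k$-log-canonical divisor over the generic point of a DVR, it first uses properness of $\overline{\cM}_{g,n}$ and $\overline{\cM}_{0,2}(\PP^1)$ to extend the underlying prestable curve without creating unstable components of type $(\PP^1,0)$, then invokes the properness of the log-map stack \cite[Theorem 6.1.1]{QChen} to extend the log-map, and finally checks that the resulting limit is again a stable log-section. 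This is a genuine and necessary step that your proposal omits.

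Second, the automorphism bookkeeping. Passing from Chen's log-maps to log-sections \emph{enlarges} the automorphism group (Remark \ref{DFaut}: a log-map automorphism forces $\rho = \mathrm{id}$, while a log-section automorphism allows nontrivial $\rho$ commuting with $f$). Calling this a "clean rigidification" is backwards: rigidification removes automorphisms, whereas here one must quotient. The paper makes this precise in Lemma \ref{ident}, identifying $\cD(s)$ with $\bigsqcup_\beta [\mathcal{K}^o_{s,\beta}(s)/\mathcal{A}ut(s)]$, with $\mathcal{A}ut(s)$ the automorphism group scheme of the family of marked curves. Moreover, Lemma \ref{repres} is the wrong lemma for this step: it compares log-section automorphisms with automorphisms of the underlying schematic section, which is exactly what is needed for representability and finiteness of $\mathrm{FL}$, but not for the log-map-to-log-section passage. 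Your obstruction theory sketch is essentially correct and matches the paper, which follows \cite[Section 5]{GrossSieb}; that difference is cosmetic.
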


The rest of this section consists of the proof of Theorem \ref{DMstack}, based on the papers \cite{QChen,GrossSieb}.
We fix the topological type $\Gamma$ and the integer $k$, so that we denote the moduli space $\cD$.

\subsection{$\cD$ is a DM stack}
We take a morphism $s \colon S \to \mathfrak{M}_{g,n}$ from a scheme $S$, it corresponds to a family of prestable curves $s \colon C \to S$.
We denote by $P_s$ the corresponding projective bundle over $C$.
It is a projective variety over the scheme $S$.
Endowed with the divisorial log-structure $\cM^{0,\infty}$, it is a Deligne--Faltings pair of rank one.
Therefore, for each curve class $\beta \in H_2(P,\ZZ)$, we form the moduli space
$$\mathcal{K}_{s,\beta}:=\mathcal{K}_{\{\Gamma,\beta\}}((P_s,\cM^{0,\infty})/S)$$
of $\{\Gamma,\beta\}$-minimal stable log-maps from \cite[Proposition 5.7]{QChen2}.
By \cite[Proposition 5.8]{QChen2}, it is a Deligne--Mumford stack of finite type, proper over the scheme $S$. In particular, it is an algebraic stack.

We define the open substack
$$\mathcal{K}_{s,\beta}^o \subset \mathcal{K}_{s,\beta}$$
by requiring the two conditions of Lemma \ref{stable}.
%%%%%%%%%%%% We have a morphism $$\mathcal{K}_{s,\beta} \to \overline{\cM}_{g,n}$$ forgetting the log-structures and the map, and further contracting unstable components of the source curve. Since the stacks $\mathcal{K}_{s,\beta}$ and $\overline{\cM}_{g,n}$ are proper, then this morphism is proper.
We form the fibered product
\begin{center}
\begin{tikzpicture}[scale=0.75]
\draw (0,2) node(A1){$\mathcal{K}^o_{s,\beta}(s)$}
 (2+2,2) node(A2){$\mathcal{K}^o_{s,\beta}$}
 (2+2,0) node(A3){$\mathfrak{M}_{g,n}$}
 (0,0) node(A4){$S$}
 (1-0.1+1,1-0.1)--(1+0.1+1,1-0.1)--(1+0.1+1,1+0.1)--(1-0.1+1,1+0.1)--(1-0.1+1,1-0.1);
\draw (2,0) node[below]{$s$};
 
\draw[->,>=stealth] (A1) to[bend right=0] (A2);
\draw[->,>=stealth] (A1) to[bend right=0] (A4);
\draw[->,>=stealth] (A2) to[bend right=0] (A3);
\draw[->,>=stealth] (A4) to[bend right=0] (A3);
\end{tikzpicture}
\end{center}
which is also an algebraic stack over the scheme $S$.
The group scheme $\mathcal{A}ut(s)$ of automorphisms of the family $s$ of prestable marked curves acts freely on the stack $\mathcal{K}^o_{s,\beta}(s)$ by composition.
On the other hand, we form the fibered product
\begin{center}
\begin{tikzpicture}[scale=0.75]
\draw (0,2) node(A1){$\cD(s)$}
 (2+2,2) node(A2){$\cD$}
 (2+2,0) node(A3){$\mathfrak{M}_{g,n}$}
 (0,0) node(A4){$S$}
 (1-0.1+1,1-0.1)--(1+0.1+1,1-0.1)--(1+0.1+1,1+0.1)--(1-0.1+1,1+0.1)--(1-0.1+1,1-0.1);
\draw (2,0) node[below]{$s$};
 
\draw[->,>=stealth] (A1) to[bend right=0] (A2);
\draw[->,>=stealth] (A1) to[bend right=0] (A4);
\draw[->,>=stealth] (A2) to[bend right=0] (A3);
\draw[->,>=stealth] (A4) to[bend right=0] (A3);
\end{tikzpicture}
\end{center}

\begin{lem}\label{ident}
Let $F$ denote the pull-back $F:=p^*(Q)$ by the projection $p \colon P_s \to C$ of a divisor $Q=q(S)$ given by any section $q \colon S \to C$ of the family $s$.  
Then, we have an isomorphism
$$\cD(s) \simeq
\bigsqcup_{\beta} \left[\mathcal{K}^o_{s,\beta}(s)/\mathcal{A}ut(s)\right]$$
of fibered categories over $S$,
where the disjoint union is taken over curve-classes $\beta \in H_2(P_s,\ZZ)$ satisfying
$$\beta \cdot F=1.$$
The number of such curve-classes $\beta$ is finite.
Consequently, the fibered category $\cD(s)$ is an algebraic stack of finite type over the base scheme $S$.
\end{lem}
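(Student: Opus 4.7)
The plan is to exhibit a natural equivalence of fibered categories by going through the underlying schematic picture first. The key input is Remark \ref{homol}, which tells us that a schematic morphism $f \colon C_T \to P_{s,T}$ is a section of the projection $p$ up to an automorphism of the source curve if and only if its induced homology class $\beta$ satisfies $\beta \cdot F = 1$. This observation explains why log-sections appear as a disjoint union indexed by those curve-classes.

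First, I would define a functor $\Phi$ from $\bigsqcup_{\beta}\mathcal{K}^o_{s,\beta}(s)$ to $\cD(s)$ on the level of total spaces, before quotienting. Given a $T$-family of minimal stable log-maps $f \colon (C_T, \cM_{C_T}) \to (P_{s,T}, \cM^{0,\infty})$ in $\mathcal{K}^o_{s,\beta}(s)$, the identification of $C_T$ with $s^*C_s$ together with the condition $\beta \cdot F = 1$ produces an automorphism $\alpha$ of $C_T$ such that $\alpha^{-1}\circ f$ is a schematic section of $p$. Enhancing this with the induced log-structures yields an object of $\cD(s)$. The choice of $\alpha$ is unique only up to the subgroup stabilizing the section, so the construction descends to a functor from the quotient stack $[\mathcal{K}^o_{s,\beta}(s)/\mathcal{A}ut(s)]$ into the corresponding component of $\cD(s)$.

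Next I would construct a quasi-inverse: a $k$-log-canonical divisor is tautologically a log-map whose schematic morphism is a section, and hence has $\beta \cdot F = 1$; the identity identification of its source with $s^*C_s$ turns it into an object of some $\mathcal{K}^o_{s,\beta}(s)$. The two constructions are manifestly inverse at the level of objects, so the real content lies in matching the automorphism $2$-groups. Here I would invoke Lemma \ref{repres} to reduce the comparison to the schematic level, then compare Definition \ref{auto1} and Remark \ref{DFaut} with the automorphism data packaged by a quotient stack. This is the main obstacle I anticipate: the $\mathcal{A}ut(s)$-quotient on the left injects automorphisms $\rho$ of the source curve into the groupoid, and one must verify that precisely the $\rho$ satisfying $\rho^{\mathrm{ind}}\circ f = f\circ\rho$ survive as $2$-arrows in the quotient, matching $\mathcal{A}ut_T(\xi)$ exactly.

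For finiteness of the index set, Remark \ref{homol} shows that $\beta$ is determined by the marked graph of the underlying $k$-log-canonical divisor, and Remark \ref{finitemarkedgraph} bounds the number of marked graphs compatible with the fixed contact orders $\vec{\mathbf{c}}$ of $\Gamma$. Algebraicity and finite type of $\cD(s)$ then follow from the algebraicity of each $\mathcal{K}^o_{s,\beta}$ by \cite[Proposition 5.8]{QChen2}, the openness of stability established in Lemma \ref{stable}, the fact that the group scheme $\mathcal{A}ut(s)$ of automorphisms of a family of prestable marked curves is of finite type over $S$, and the closure of these properties under quotient by a finite-type group scheme and under finite disjoint union.
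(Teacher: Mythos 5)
Your proposal takes essentially the same route as the paper's, unwinding Remarks \ref{DFaut}, \ref{homol}, and \ref{logmap} directly on $T$-families rather than first establishing the isomorphism on $\mathrm{id}_S$-points and then invoking functoriality of $(P_s,\cM^{0,\infty})$ under base change to bootstrap to all of $S$. The key idea is the same in both: $\beta\cdot F=1$ recognizes log-maps that are sections up to an automorphism of $C$, the quotient by $\mathcal{A}ut(s)$ removes that automorphism, and Lemma \ref{repres} pins down the log data.

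Two things in your write-up are slightly off, though neither is fatal. First, the automorphism $\alpha$ (namely $p\circ f$, transported through the identification $\gamma$ that is part of the data of the fiber product over $\mathfrak{M}_{g,n}$) is determined by the object, not chosen; what actually makes the construction descend is that the resulting section is \emph{invariant} under the $\mathcal{A}ut(s)$-action, because acting by $\rho$ changes $\gamma$ and $\alpha$ in compensating ways (and note $\alpha^{-1}\circ f$ as written does not typecheck --- you need $\alpha^{\mathrm{ind},-1}\circ f$ or $f\circ\alpha^{-1}$). Second, the ``main obstacle'' you anticipate --- matching $\mathcal{A}ut_T(\xi)$ as 2-arrows on both sides --- is in fact no obstacle at all, because both $\cD(s)$ and $[\mathcal{K}^o_{s,\beta}(s)/\mathcal{A}ut(s)]$ have \emph{trivial} inertia: on the $\cD(s)$ side the fiber product over $\mathfrak{M}_{g,n}$ rigidifies the underlying curve, and Remark \ref{DFaut} together with Lemma \ref{repres} shows that any automorphism of a $k$-log-canonical divisor acting trivially on the curve is trivial; on the quotient side the $\mathcal{A}ut(s)$-action is free (as the paper asserts) and objects of $\mathcal{K}^o_{s,\beta}(s)$ already have trivial automorphisms as log-maps. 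The groups $\mathcal{A}ut_T(\xi)$ only resurface when you pass from $\cD(s)$ down to $\cD$, so the full-faithfulness check is actually trivial here.
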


\begin{proof}
Denote by $\mathrm{id}_S \colon S \to S$ the identity morphism of the scheme $S$.
First, by Remarks \ref{DFaut}, \ref{homol}, and \ref{logmap}, we have an isomorphism of the groupoids of $\mathrm{id}_S$-points
$$\cD(s)(\mathrm{id}_S) \simeq
\bigsqcup_{\beta} \left[\mathcal{K}^o_{s,\beta}(s) / \mathcal{A}ut(s) \right](\mathrm{id}_S).$$

Second, the construction of the log-scheme $(P_s,\cM^{0,\infty})$ is functorial, i.e.~for any morphism $u \colon U \to S$, we have a cartesian diagram
\begin{center}
\begin{tikzpicture}[scale=0.75]
\draw (0,4) node(B1){$P_{s \circ u}$}
 (2+2,4) node(B2){$P_s$}
 (0,2) node(A1){$C'$}
 (2+2,2) node(A2){$C$}
 (2+2,0) node(A3){$S$}
 (0,0) node(A4){$U$}
 (1-0.1+1,1-0.1)--(1+0.1+1,1-0.1)--(1+0.1+1,1+0.1)--(1-0.1+1,1+0.1)--(1-0.1+1,1-0.1)
 (1-0.1+1,1-0.1+2)--(1+0.1+1,1-0.1+2)--(1+0.1+1,1+0.1+2)--(1-0.1+1,1+0.1+2)--(1-0.1+1,1-0.1+2);
\draw (2,0) node[below]{$u$};

\draw[->,>=stealth] (B1) to[bend right=0] (B2);
\draw[->,>=stealth] (B1) to[bend right=0] (A1);
\draw[->,>=stealth] (B2) to[bend right=0] (A2);
\draw[->,>=stealth] (A1) to[bend right=0] (A2);
\draw[->,>=stealth] (A1) to[bend right=0] (A4);
\draw[->,>=stealth] (A2) to[bend right=0] (A3);
\draw[->,>=stealth] (A4) to[bend right=0] (A3);
\draw[->,>=stealth] (A1) to[bend left=30] (B1);
\draw[->,>=stealth] (A2) to[bend right=30] (B2);
\draw (-1,3) node{$0, \infty$};
\draw (5,3) node{$0, \infty$};
\end{tikzpicture}
\end{center}
compatible with the $0$ and the $\infty$ sections.
Thus, there is a canonical correspondence between log-maps $C' \to P_{s \circ u}$
and $C' \to P_s$, and an isomorphism
$$\cD(s) \simeq
\bigsqcup_{\beta} \left[\mathcal{K}^o_{s,\beta}(s)/\mathcal{A}ut(s)\right]$$
of fibered categories over $S$.
Note that by Remark \ref{homol}, the number of $\beta$ in the above disjoint union is finite.
%%%%%%%%%%% Is there only one beta?
Therefore, the fibered category $\cD(s)$ is an algebraic stack of finite type over the scheme $S$.
\end{proof}

Since for any morphism $s \colon S \to \mathfrak{M}_{g,n}$, the fibered product
$$\cD(s) = \cD \times_{\mathfrak{M}_{g,n}} S$$
is an algebraic stack of finite type over $S$, then the fibered category $\cD$ is an algebraic stack of finite type over $\mathfrak{M}_{g,n}$, and therefore over a point.
Moreover, by the definition \ref{stabledef} of stability, together with Lemma \ref{repres}, for any $k$-log-canonical divisor $\xi$ over a geometric point $S$, the automorphism group $\mathcal{A}ut_S(\xi)(S)$ is finite, so that the stack $\cD(s)$ is a Deligne--Mumford stack.

Forgetting all the log-structures in the above arguments, we show that the fibered category $D$ is also a Deligne--Mumford stack, and the representability of the map
$$\mathrm{FL} \colon \cD \to D$$
comes from the isomorphism of groups
$$\mathcal{A}ut_S(\xi)(S) \to \mathcal{A}ut_S(\underline{\xi})(S)$$
for every object $\xi \in \cD(S)$ over a geometric point $S$, see Lemma \ref{repres}.
The finiteness of the map $\mathrm{FL}$ comes from \cite[Proposition 3.7.5]{QChen}, i.e.~there are at most finitely many minimal $k$-log-canonical divisors over a geometric point with fixed underlying map and marked graph, together with the fact there are finitely many possible marked graphs for a given topological type, see Remark \ref{finitemarkedgraph}.

\subsection{$\cD$ is proper}
The stack $\cD$ has finite diagonal,
%%%%%%%%%%%%% VRAI?
%la diagonale est le foncteur Isom_S(\xi_1,\xi_2), qui est représentable par un schéma (Laumon p26), car le champ est DM. De plus, il est quasi-fini (collection de points). Reste à montrer qu'il est propre. Je pense que ça revient à dire qu'il n'y a qu'un nombre fini d'isomorphisme. Non? Ca c'est DM...
 so that it admits a finite surjective morphism from a scheme, by \cite[Theorem 2.7]{EHKV}.
Therefore, it is sufficient to check the weak 
valuative criterion to prove the stack is proper.

In the following lemma, we denote by $R$ a discrete valuation ring, $K$ the fraction field of $R$, $S:=\mathrm{Spec}(R)$, and $s$ and $\eta$ the closed and generic point of $S$.
Similarly, we use the notations $K'$, $S'$, $s'$, and $\eta'$ for another discrete valuation ring $R'$.

\begin{lem}\label{wvc}
Let $\xi_\eta \in \cD(\eta)$.
Up to a base change given by an injection $R \hookrightarrow R'$ of discrete valuation rings such that $K \hookrightarrow K'$ is a finite field extension, we have a cartesian diagram
\begin{center}
\begin{tikzpicture}[scale=0.6]
\draw (0,2) node(A1){$\xi_{\eta'}$}
 (2+2,2) node(A2){$\xi_{S'}$}
 (2+2,0) node(A3){$S'$}
 (0,0) node(A4){$\eta'$}
 (1-0.1+1,1-0.1)--(1+0.1+1,1-0.1)--(1+0.1+1,1+0.1)--(1-0.1+1,1+0.1)--(1-0.1+1,1-0.1);
 
\draw[->,>=stealth] (A1) to[bend right=0] (A2);
\draw[->,>=stealth] (A1) to[bend right=0] (A4);
\draw[->,>=stealth] (A2) to[bend right=0] (A3);
\draw[->,>=stealth] (A4) to[bend right=0] (A3);
\end{tikzpicture}
\end{center}
where $\xi_{\eta'}$ is the pull-back of $\xi_\eta$ via $\eta' \to \eta$ and we have $\xi_{S'} \in \cD(S')$.
Furthermore, the extension $\xi_{S'}$ is unique up to a unique isomorphism.
\end{lem}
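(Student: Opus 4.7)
The plan is to combine properness of $\overline{\cM}_{g,n}$ with Chen's properness of the moduli of log-stable maps \cite[Proposition 5.8]{QChen2}, exploiting the decomposition $\cD(s) \simeq \bigsqcup_\beta \left[\mathcal{K}^o_{s,\beta}(s)/\mathcal{A}ut(s)\right]$ established in Lemma \ref{ident}.

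First, I would extend the source curve. The stabilization of the source curve of $\xi_\eta$ defines a morphism $\eta \to \overline{\cM}_{g,n}$, which by properness of $\overline{\cM}_{g,n}$ extends, after a finite base change $R \hookrightarrow R'$, to a family $\hat{C}_{S'} \to S'$ of stable marked curves. The resulting projective bundle $P_{S'} := \mathrm{Proj}(\cO_{\hat{C}_{S'}} \oplus \omega_{\hat{C}_{S'}/S'}^k)$ is a projective scheme over $S'$ and, equipped with the divisorial log-structure $\cM^{0,\infty}$ coming from its $0$ and $\infty$ sections, is a Deligne--Faltings pair of rank one.

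Second, I would invoke Chen's valuative criterion. Let $\beta$ denote the curve class of $f_\eta(C_{\eta'})$ in $P_{\eta'}$. Chen's moduli of minimal stable log-maps to $(P_{S'}, \cM^{0,\infty})$ of type $(\Gamma, \beta)$ is proper over $S'$ by \cite[Proposition 5.8]{QChen2}. The log-map $\xi_{\eta'}$ gives an $\eta'$-point; after possibly another finite base change, the valuative criterion produces a unique $S'$-point with prestable source $C'_{S'}$ refining $\hat{C}_{S'}$. The section condition $\beta \cdot F = 1$, being topological, is preserved under specialization (Remark \ref{homol}), so the extension remains a section of the projective bundle. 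Finally, any unstable components of $C'_{S'}$ at the special fiber violating Lemma \ref{stable} can be contracted compatibly with the line bundle and section data to produce the desired $\xi_{S'} \in \cD(S')$.

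Uniqueness up to a unique isomorphism follows from uniqueness at each step: the stable source extends uniquely by separatedness of $\overline{\cM}_{g,n}$, and the minimal stable log-map extends uniquely by separatedness implied by Chen's properness theorem. The main obstacle is that the target $(P, \cM^{0,\infty})$ is not a fixed log-variety but depends on the source curve $C$, so Chen's theorem does not apply directly; this is handled by first fixing the stable source via $\overline{\cM}_{g,n}$ and then using the section condition $\beta \cdot F = 1$ to ensure the log-map extension has a source curve compatible with the projective bundle structure.
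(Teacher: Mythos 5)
Your proposal follows the same broad strategy as the paper --- extend the source curve, build the relative projective bundle, invoke Chen's properness theorem, and then recover a log-section --- but you place the bundle over a different curve. The paper extends the full \emph{prestable} source $C_\eta$ to a family $C_S$ (invoking properness of both $\overline{\cM}_{g,n}$ and $\overline{\cM}_{0,2}(\PP^1)$ so that the $(\PP^1,0,\infty)$ components are carried along), so that $\xi_\eta$ is already a genuine log-section of $P_S|_\eta$ and the composition $p_S \circ f_S$ of the extended log-map with the projection extends the \emph{identity} $C_{\eta'} \to C_{\eta'}$. You instead extend only the \emph{stabilization} $\hat C_\eta$, and as a consequence $\xi_{\eta'}$ is only a log-map to $P_{\hat C_{\eta'}}$ after composing with the contraction $C_{\eta'} \to \hat C_{\eta'}$; the composition $p \circ f'$ extends that contraction, not the identity.

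This is where your write-up has a real gap. The sentence ``The section condition $\beta \cdot F = 1$, being topological, is preserved under specialization\ldots so the extension remains a section of the projective bundle'' conflates two different things. The extended log-map $f' \colon C'_{S'} \to P_{\hat C_{S'}}$ has source $C'_{S'}$, not $\hat C_{S'}$, so it is not a section of $P_{\hat C_{S'}} \to \hat C_{S'}$; all that $\beta\cdot F = 1$ tells you is that $p\circ f'$ is generically one-to-one. To land in $\cD(S')$ you must produce a log-section of the \emph{correct} projective bundle $P_{C'_{S'}} = \mathrm{Proj}(\cO_{C'_{S'}} \oplus \omega_{C'_{S'}/S'}^{k})$, and for that you need to show that $p\circ f'$ is a partial stabilization (contraction of semistable components), so that $\omega_{C'_{S'}/S'}^{k} \simeq (p\circ f')^* \omega_{\hat C_{S'}/S'}^{k}$, hence $P_{C'_{S'}} \simeq (p\circ f')^* P_{\hat C_{S'}}$, and $f'$ factors through a section. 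This is precisely the content the paper makes easy by starting from the prestable $C_\eta$: the morphism $\Phi = p_S\circ f_S$ extends the identity, so the pullback of $P_S$ along $\Phi$ is visibly the right bundle and the universal property of the fiber product produces the section. Your closing ``contract unstable components compatibly with the line bundle and section data'' is plausible, but it is stated at a level of generality that hides the work; you should either verify the partial-stabilization property of $p\circ f'$ directly, or switch to the paper's route of extending the prestable curve first, which eliminates the issue.
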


\begin{proof}
Consider the minimal and stable $k$-log-canonical divisor
\begin{equation*}
\xi_\eta = (\pi_\eta \colon C_\eta \rightarrow \eta, (\sigma_1,\dotsc,\sigma_n), \cM_\eta^{C_\eta/\eta} \rightarrow \cM_\eta ~ ; ~~  C_\eta \xrightarrow{f_\eta} P_\eta, f_\eta^*\cM^{0,\infty} \xrightarrow{f_\eta^\flat} \cM_{C_\eta})
\end{equation*}
over the generic point $\eta$ of the scheme $S:=\mathrm{Spec}(R)$.
In particular, it satisfies the two conditions of Lemma \ref{stable}, i.e.~the only unstable components of the curves are of type $(\PP^1,0,\infty)$ and they are not contracted to a point via the schematic map $f_\eta$. Moreover, the corresponding projective bundle $P_\eta$ is the trivial $\PP^1$-bundle on these components.
Since the stacks $\overline{\cM}_{0,2}(\PP^1)$ and $\overline{\cM}_{g,n}$ are proper, possibly after a base change, we can extend $\pi_\eta$ into a family
$$\pi_S \colon C_S \to S$$
of prestable curves with no unstable components of type $(\PP^1,0)$.
%%%%%%%%%%%% Vérifier
We form the corresponding projective bundle $P_S$ over $C_S$ and endow it with the divisorial log-structure $\cM^{0,\infty}$ given by the sections $0$ and $\infty$. It is a Deligne--Faltings pair of rank one, projective over $S$.
%%%%%%%%%% S est-il bien projectif?

The $k$-log-canonical divisor $\xi_\eta$ is in particular a minimal and stable log-map over $\eta$ to the log-scheme $P_S$, i.e.~we have
$$\xi_\eta \in \mathcal{K}(P_S)(\eta),$$
as defined and denoted in \cite{QChen}.
By \cite[Theorem 6.1.1]{QChen}, the stack $\mathcal{K}(P_S)$ is proper, so that we can extend $\xi_\eta$ as a log-map
$$\xi' \in \mathcal{K}(P_S)(S')$$
over $S'$ obtained by base-change from $S$.
The log-map $\xi'$ is thus given by the log-morphisms
\begin{center}
\begin{tikzpicture}[scale=1]
\draw (0,0) node(A1){$(C_{S'},\cM_{C_{S'}})$}
 (4,0) node(A2){$(P_{S},\cM^{0,\infty})$}
 (0,-1) node(A3){$(S',\cM_{S'})$}
 (2,0) node[above]{$f_{S}$};
 
\draw[->,>=stealth] (A1) to[bend right=0] (A2);
\draw[->,>=stealth] (A1) to[bend right=0] (A3);
\end{tikzpicture}
\end{center}
Composing with the projection morphism of schemes $p_S \colon P_S \to C_{S}$, we obtain a morphism of schemes
$$p_S \circ f_{S} \colon C_{S'} \to C_S$$
whose restriction over the base $\eta'$ is the morphism $\Phi$ of families of prestable curves induced by the base-change $\eta' \to \eta$.
Since the family of prestable curves $C_{S'} \to S'$ is flat, then we have an equality
$$p_S \circ f_{S} = \Phi$$
of the schematic morphisms.
Furthermore, up to base-change, we can consider that this family of schematic morphisms satisfies the two conditions of Lemma \ref{stable}.
%%%%%%%%%%%% Vérifier
From the commutative diagram of log-schemes
\begin{center}
\begin{tikzpicture}[scale=0.75]
\draw (0,2) node(A1){$(P_{S'},\cM^{0,\infty})$}
 (2+2,2) node(A2){$(P_S,\cM^{0,\infty})$}
 (2+2,0) node(A3){$(C_S,\cO^*_{C_S})$}
 (0,0) node(A4){$(C_{S'},\cO^*_{C_{S'}})$}
 (7,-1) node(B){$(C_{S'},\cM_{C_{S'}})$}
 (1-0.1+1,1-0.1)--(1+0.1+1,1-0.1)--(1+0.1+1,1+0.1)--(1-0.1+1,1+0.1)--(1-0.1+1,1-0.1);
\draw (2,0) node[below]{$\Phi$};
\draw (6,0.7) node[right]{$f_{S}$};
 
\draw[->,>=stealth] (A1) to[bend right=0] (A2);
\draw[->,>=stealth] (A1) to[bend right=0] (A4);
\draw[->,>=stealth] (A2) to[bend right=0] (A3);
\draw[->,>=stealth] (A4) to[bend right=0] (A3);
\draw[->,>=stealth] (B) to[bend right=10] (A2);
\draw[->,>=stealth] (B) to[bend left=10] (A4);
\end{tikzpicture}
\end{center}
we get a canonical log-map
\begin{center}
\begin{tikzpicture}[scale=1]
\draw (0,0) node(A1){$(C_{S'},\cM_{C_{S'}})$}
 (4,0) node(A2){$(P_{S'},\cM^{0,\infty})$}
 (0,-1) node(A3){$(S',\cM_{S'})$}
 (2,0) node[above]{$f_{S'}$};
 
\draw[->,>=stealth] (A1) to[bend right=0] (A2);
\draw[->,>=stealth] (A1) to[bend right=0] (A3);
\end{tikzpicture}
\end{center}
satisfying the two conditions of Lemma \ref{stable} and whose schematic map $f_{S'}$ is a section.
Hence, we obtain a point
$\xi_{S'} \in \cD(S').$

If we have two extensions $\xi^{(1)}_{S},\xi^{(2)}_{S} \in \cD(S)$ of the family $\xi_\eta\in \cD(\eta)$, then they are extensions as log-maps of two families $\xi^{(1)}_\eta, \xi^{(2)}_\eta \in \mathcal{K}(P_{S})(\eta)$, which are isomorphic by an automorphism $\phi_1:=(\rho_1,\theta_1)$ of families of log-curves over $\eta$.
The automorphism $\phi_1$ is extended uniquely as an automorphism between the families $\pi^{(1)}_S$ and $\pi^{(2)}_S$ over the base scheme $S$.
Thus, we get two extensions $\phi^*_1\xi^{(1)}_{S}, \xi^{(2)}_{S}$ as log-maps of $\xi^{(2)}_\eta$.
By \cite[Theorem 6.1.1]{QChen}, there is a unique isomorphism $\phi_2:=(\rho_2,\theta_2)$ between $\phi^*_1\xi^{(1)}_{S}$ and $\xi^{(2)}_{S}$, and the composition $\phi_2 \circ \phi_1$ is an isomorphism of $(\xi^{(1)}_{S},\xi^{(2)}_{S})$ as $k$-log-canonical divisors. It is unique since the automorphisms $\phi_1$ and $\phi_2$ are unique.
\end{proof}

\subsection{Perfect obstruction theory}
The following description is very similar to the construction of a perfect obstruction theory for Gromov--Witten theory, replacing the usual cotangent complex by Olsson's log-cotangent complex \cite[Definition 3.2]{Olsson}.
In this section, we closely follow \cite[Section 5]{GrossSieb}.
Note that, by \cite[Proposition 4.8]{QChen2}, the notion of a basic stable log-map from \cite[Definition 1.17]{GrossSieb} coincides with the notion of a minimal stable log-map from \cite{QChen,QChen2}.

Let $\mathfrak{M}$ denote the log-stack defined in \cite[Appendix A]{GrossSieb}, whose $S$-points are families of prestable curves over $S$, together with the choice of a fine log-structure on the base $S$.
Consider $\mathfrak{C} \to \mathfrak{M}$ the universal curve, which is also a log-stack, and $p \colon \mathfrak{P} \to \mathfrak{C}$ the associated universal projective bundle, where the log-structure is
$$\cM_{\kP}:= p^*(\cM_\kC) \oplus_{\cO^*_\kP} \cM^{0,\infty},$$
so that the projection $p \colon \mathfrak{P} \to \kC$ is a log-smooth, projective, and representable morphism of log-stacks.

The Deligne--Mumford stack $\cD$ is canonically endowed with the following log-structure.
For any morphism $g \colon T \to \cD$, we take the log-structure $g^*\cM_{\cD}:=\cM_T$ corresponding to the log-structure on the base of the $k$-log-canonical divisor $g \in \cD(T)$.
We have a strict log-morphism of log-stacks $\cD \to \kM$ sending a $k$-log-canonical divisor $\xi$ over $S$ to its family of prestable log-curves $(C,\cM_C) \to (S,\cM_S)$.
Then, we consider the universal curve
$$\pi \colon  \cC \to \cD \quad \textrm{with} \quad \cC := \cD \times_{\kM} \kC,$$
and the evaluation morphism
$$f \colon \cC \to \kP;$$
they are log-morphisms of log-stacks.
Following \cite[Section 5]{GrossSieb} and \cite[Remark 5.2]{GrossSieb}, the commutative diagram
\begin{center}
\begin{tikzpicture}[scale=0.7]
\draw (0,2) node(A1){$\cC$}
 (2,2) node(A2){$\kP$}
 (2,0) node(A3){$\kC$}
 (0,0) node(A4){$\kC$}
 (1,1) node{$\circlearrowright$}
 (1,2) node[above]{$f$}
 (1,0) node[below]{$\mathrm{id}_{\kC}$};
 
\draw[->,>=stealth] (A1) to[bend right=0] (A2);
\draw[->,>=stealth] (A1) to[bend right=0] (A4);
\draw[->,>=stealth] (A2) to[bend right=0] (A3);
\draw[->,>=stealth] (A4) to[bend right=0] (A3);
\end{tikzpicture}
\end{center}
yields a morphism of log-cotangent complexes
$$Lf^* L^\bullet_{\kP/\kC} \to L^\bullet_{\cC/\kC},$$
which induces a morphism
$$\phi \colon R\pi_*(Lf^*L^\bullet_{\kP/\kC} \otimes^L \omega_{\underline{\pi}}) \to L^\bullet_{\underline{\cD}/\underline{\kM}},$$
where we underline the objects considered without log-structure.
By log-smoothness of the log-morphism $\kP \to \kC$, the object $L^\bullet_{\kP/\kC}$ is represented by a locally free sheaf defined as follows.
For any morphism $g \colon T \to \kP$, we take the locally free sheaf $$\Omega^1_{(P_g,\cM_{P_g}) / (C_{p \circ g},\cM_{C_{p \circ g}})}$$ of log-differentials of the log-smooth morphism $(P_g,\cM_{P_g}) \to (C_{p \circ g},\cM_{C_{p \circ g}})$.
We denote the locally free sheaf on $\kP$ by $\Omega^1_{\kP/\kC}$.
Hence, we get a morphism
$$\phi \colon E^\bullet \to L^\bullet_{\underline{\cD}/\underline{\kM}},$$
where the object
$$E^\bullet := (R\pi_* \left[ f^* \Theta_{\kP/\kC} \right])^\vee$$
is of perfect amplitude contained in $\left[ 0,1 \right]$, and $\Theta_{\kP/\kC}$ is the dual of $\Omega^1_{\kP/\kC}$.

Following \cite[Proposition 5.1]{GrossSieb}, we claim that the morphism $\phi$ is a perfect obstruction theory relative to the stack $\underline{\kM}$.
The proof is exactly the same as in \cite{GrossSieb}. We reproduce the main arguments.

Let $T \to \overline{T}$ be a square-zero extension of schemes over $\underline{\kM}$, with ideal $\mathcal{J}$, and let $\xi_T \in \underline{\cD}(T)$. To simplify notations, we write $g \colon T \to \underline{\cD}$ instead of $\xi_T$.
Note that the maps from $T$ and $\overline{T}$ to $\underline{\kM}$ give log-structures on $T$ and on $\overline{T}$.
We have a commutative diagram of log-stacks
\begin{center}
\begin{tikzpicture}[scale=1]
\draw (0,2) node(A1){$\cC_T$}
 (2,2) node(A2){$\cC$}
 (2,0) node(A3){$\kC$}
 (0,0) node(A4){$\cC_{\overline{T}}$}
 (3,2) node(A5){$\kP$}
 (3,0) node(A6){$\kC$}
 (-1+0,-1+2) node(B1){$T$}
 (-1+2,-1+2) node(B2){$\cD$}
 (-1+2,-1+0) node(B3){$\kM$}
 (-1+0,-1+0) node(B4){$\overline{T}$};
 
\draw[->,>=stealth] (A1) to[bend right=0] (A2);
\draw[->,>=stealth] (A1) to[bend right=0] (A4);
\draw[->,>=stealth] (A2) to[bend right=0] (A3);
\draw[->,>=stealth] (A4) to[bend right=0] (A3);
\draw[->,>=stealth] (B1) to[bend right=0] (B2);
\draw[->,>=stealth] (B1) to[bend right=0] (B4);
\draw[->,>=stealth] (B2) to[bend right=0] (B3);
\draw[->,>=stealth] (B4) to[bend right=0] (B3);
\draw[->,>=stealth] (A1) to[bend right=0] (B1);
\draw[->,>=stealth] (A2) to[bend right=0] (B2);
\draw[->,>=stealth] (A3) to[bend right=0] (B3);
\draw[->,>=stealth] (A4) to[bend right=0] (B4);
\draw[->,>=stealth] (A2) to[bend right=0] (A5);
\draw[->,>=stealth] (A3) to[bend right=0] (A6);
\draw[->,>=stealth] (A5) to[bend right=0] (A6);
\end{tikzpicture}
\end{center}
where all but the front and back faces of the cube are cartesian. Denote by $\widetilde{g}$ the map from $\cC_T$ to $\cC$.
We need to study the extension problem of $g$ to $\overline{T}$.
Recall from Remark \ref{DFaut} that the automorphism group $\mathcal{A}ut_S(\xi)$ of a $k$-log-canonical divisor $\xi$ over $S$ is contained inside the automorphism group $\mathcal{A}ut_S(C)$ of the underlying family of curves over $S$. It is still true when we consider families of log-curves over $S$. It means that the log-morphism $\cD \to \kM$ is representable. Moreover, it preserves the log-structures, i.e.~it is a strict log-morphism.
Therefore, we can use the standard obstruction theory for the extension problem, see \cite[Chapter 3]{Ill} and \cite{Olsson3}, and the obstruction class $\omega(g) \in \mathrm{Ext}^1_{\cO_T}(Lg^* L^\bullet_{\underline{\cD}/\underline{\kM}},\mathcal{J})$ is given by
$$Lg^* L^\bullet_{\underline{\cD}/\underline{\kM}} \to L^\bullet_{T/\overline{T}} \to \tau_{\geq -1} L^\bullet_{T/\overline{T}} = \mathcal{J}\left[1\right].$$
On the other hand, an extension of $g$ to $\overline{T}$ exists if and only if the universal log-morphism $(f_T,f^\flat_T) \colon \cC_T \to \kP$ extends as a log-morphism to $\cC_{\overline{T}}$, because of the homological description of a log-section.
By \cite[Theorem 8.45]{Olsson}, the obstruction class in this case is $o \in \mathrm{Ext}^1_{\cO_{\cC_T}}(Lf_T^* L^\bullet_{\kP/\kC},\pi_T^*\mathcal{J})$ given by
$$Lf_T^* L^\bullet_{\kP/\kC} \to L^\bullet_{\cC_T/\cC_{\overline{T}}} \to \tau_{\geq -1} L^\bullet_{\cC_T/\cC_{\overline{T}}} = \pi_T^* \mathcal{J}\left[ 1 \right].$$
As in the proof of \cite[Proposition 5.1]{GrossSieb}, we show that
\begin{eqnarray*}
\mathrm{Ext}^l_{\cO_T}(Lg^* E^\bullet,\mathcal{J}) & = & \mathrm{Ext}^l_{\cO_T}(Lg^* R\pi_*(Lf^* L^\bullet_{\kP/\kC} \otimes^L \omega_\pi), \mathcal{J}) \\
& = & \mathrm{Ext}^l_{\cO_\cC}(Lf^* L^\bullet_{\kP/\kC} \otimes^L \omega_\pi,L\pi^!Rg_*\mathcal{J}) \\
& = & \mathrm{Ext}^l_{\cO_\cC}(
Lf^* L^\bullet_{\kP/\kC},L\pi^!R\widetilde{g}_*p^*\mathcal{J}
) \\
& = & \mathrm{Ext}^l_{\cO_{\cC_T}}(Lf_T^* L^\bullet_{\kP/\kC},\pi_T^*\mathcal{J})
\end{eqnarray*}
for any $l \in \NN$, and for $l=1$, the element $\phi^* \omega(g)$ is sent to the above obstruction class $o$, proving the obstruction part of the criterion in \cite[Theorem 5.3.3]{BF}. For $l=0$, we get the torsor part of \cite[Theorem 5.3.3]{BF}.
%%%%%%%%%%%% At last, since the object $E^\bullet$ has global resolutions, then we obtain a virtual fundamental class $\left[ \cD \right]^\mathrm{vir}$ on the moduli space $\cD$.
% It is not really needed, by Kresch.

\section{Relation to twisted canonical divisors}
In this section, we study the relation between $k$-log-canonical divisors and twisted canonical divisors as introduced in \cite[Definition 20]{PandhFark}.
In particular, we will exhibit a closed substack $\cD^{*(k)}$ of $\cD^{(k)}$ with a morphism to the moduli space $\widetilde{\mathcal{H}}^k_g(\vec{\mathbf{m}})$ of twisted canonical divisors with multiplicities $\vec{\mathbf{m}}=(\mathbf{m}_1,\dotsc,\mathbf{m}_n)$ at the markings.
Furthermore, the closed substack $\cD^{*(k)}$ is also equipped with a perfect obstruction theory and a virtual fundamental cycle.
In the last section, we isolate the so-called principal component of the stack $\cD^{*(k)}$, and in the strictly meromorphic case with $k=1$, we prove the relation between its virtual fundamental cycle and the weighted fundamental cycle from \cite[Appendix A.4]{PandhFark}.

\subsection{Description of a $k$-log-canonical divisor}\label{descr}
We study further a minimal and stable $k$-log-canonical divisor
\begin{equation*}
\xi = (\pi \colon C \rightarrow S, (\sigma_1,\dotsc,\sigma_n), \cM_S^{C/S} \rightarrow \cM_S ~ ; ~~  C \xrightarrow{f} P, f^*\cM^{0,\infty} \xrightarrow{f^\flat} \cM_C)
\end{equation*}
over a geometric point $S=\mathrm{Spec}(\CC)$, with topological type $\Gamma=(g,n,\vec{\mathbf{c}})$ and marked graph $G_\xi$.
Take the partial normalization $\nu \colon \widetilde{C} \to C$
obtained by normalizing exactly the nodes $p_1,\dotsc,p_m$ with non-zero contact orders.
Thus, the curve $\widetilde{C}$ can be disconnected and have singular fibers, but the degeneracies and the signs are constant on each of its connected components.
We denote by $p'_k$ and $p''_k$ the two preimages of a normalized node $p_k$ under the map $\nu$, and we choose the convention that $p'_k$ gets the positive contact order and $p''_k$ gets the negative contact order.

Consider all the markings $\Omega:=\left\lbrace \sigma_i \right\rbrace \cup \left\lbrace p'_k \right\rbrace \cup \left\lbrace p''_k \right\rbrace$ on the curve $\widetilde{C}$.
We form the divisors
$$D_0 := \sum_{x \in \Omega^+} c_x\cdot [x] \quad \textrm{and} \quad D_\infty := \sum_{x \in \Omega^-} c_x \cdot [x],$$
where $\Omega^+$ (resp.~$\Omega^-$) is the set of markings with sign $+$ (resp.~$-$), the integer $c_x$ is the contact order at the marking $x$. These divisors are completely determined by the marked graph $G_\xi$.
Note that a marking contributes to the divisor $D_0$ (resp.~to $D_\infty$) if it is sent to $0$ (resp.~to $\infty$) by the schematic map $f \circ \nu$.
Note also the relation $\omega_{\widetilde{C}}^k = \nu^* \omega_{C}^k$.

\begin{pro}\label{detail}
A minimal and stable $k$-log-canonical divisor $\xi$ with marked graph $G_\xi$ is equivalent to a non-vanishing global section
$$s \in H^0(\widetilde{C},\omega_{\widetilde{C}}^k(D_\infty - D_0)),$$
where the curve $\widetilde{C}$ is the partial normalization of the curve $C$ at the nodes with non-zero contact orders, and the divisors $D_0$ and $D_\infty$ are uniquely determined by the marked graph $G_\xi$.
Up to isomorphisms, the restriction of the global section to degenerate components is unique.
\end{pro}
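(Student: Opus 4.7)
The plan is to pull the schematic section $f \colon C \to P$ back along $\nu \colon \widetilde{C} \to C$, where $\omega_{\widetilde{C}}^{k} \simeq \nu^{*}\omega_{C}^{k}$, and to use the standard description of a section of the $\PP^1$-bundle $P = \mathrm{Proj}(\cO_C \oplus \omega_C^k)$ as a pair $(\cL,(s_1,s_2))$ consisting of a line bundle $\cL$ on $\widetilde{C}$ together with two sections $s_1 \in H^{0}(\widetilde{C},\cL)$, $s_2 \in H^{0}(\widetilde{C},\cL \otimes \omega_{\widetilde{C}}^{-k})$ with no common zero. The desired section $s$ of $\omega_{\widetilde{C}}^{k}(D_\infty - D_0)$ will then be built as the appropriate ratio of $s_1$ and $s_2$.

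A preliminary observation is that, by the continuity of $f$ through zero-contact-order nodes and by the compatibility rules in $G_\xi$, each connected component of $\widetilde{C}$ is either entirely non-degenerate or entirely of a single sign: a non-degenerate and a degenerate irreducible component cannot share an unnormalized node, since $f$ would have to both hit and avoid $[0]$ (or $[\infty]$) there. On a non-degenerate connected component, Remark \ref{specialpoints} identifies $\mathrm{div}(s_1)$ and $\mathrm{div}(s_2)$ with $D_\infty$ and $D_0$ (up to the convention labelling the two standard sections of $P$), so the ratio $s := s_1 \otimes s_2^{-1}$ yields a meromorphic section of $\omega_{\widetilde{C}}^{k}$ which is regular, and everywhere non-vanishing, as a section of $\omega_{\widetilde{C}}^{k}(D_\infty - D_0)$; the triviality of this line bundle is precisely Remark \ref{homol}. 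On a degenerate connected component the ratio is not directly defined because one of $s_1,s_2$ vanishes identically, but the logarithmic analogue \eqref{compat} of Remark \ref{homol} still forces $\omega_{\widetilde{C}}^{k}(D_\infty - D_0) \simeq \cO$ there, and I would take $s$ to be any non-vanishing section, which is unique up to a $\CC^{\times}$-scaling.

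This $\CC^{\times}$-ambiguity on a degenerate component is exactly the freedom to rescale the trivialization of $\cL$ on that component, and it is realized by an automorphism of $\xi$ that acts trivially on the underlying marked curve, yielding the announced uniqueness up to isomorphism. For the converse, given a non-vanishing section $s$ and the marked graph $G_\xi$, I would reconstruct $\cL$ from the divisor data (isomorphic to $\cO(D_\infty)$ on non-degenerate components and trivial on degenerate ones), define $s_1$ as the canonical section with divisor $D_\infty$ and set $s_2 := s_1 \otimes s^{-1}$, which is regular with divisor $D_0$. The resulting schematic map on $\widetilde{C}$ descends to a section $f \colon C \to P$ because of the balance $c_{p_k'}+c_{p_k''}=0$ at every normalized node. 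Finally, the minimal base log-structure $\cM_S$ and the morphism $f^\flat$ are read off uniquely from $G_\xi$ via the recipe of Section \ref{MinStab}, giving back $\xi$.

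The main technical point I expect to wrestle with is the triviality of $\omega_{\widetilde{C}}^{k}(D_\infty - D_0)$ on degenerate components, i.e.\ the compatibility relation \eqref{compat}: the marked graph only records discrete data, so this identity has to be extracted from the minimality of the log-structure together with the interplay between degeneracies and contact orders at adjacent markings and nodes. Once this line-bundle equality is in hand, the rest of the argument is the routine line-bundle/section dictionary on the partially normalized curve.
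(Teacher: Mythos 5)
On non-degenerate components your argument matches the paper's: the schematic section encodes all the data, and the $(\cL,(s_1,s_2))$-dictionary plus Remark~\ref{specialpoints} and Remark~\ref{homol} reproduces the section $s$.

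On degenerate components, however, there is a genuine gap. You invoke \eqref{compat} to assert that $\omega^k_{\widetilde{C}}(D_\infty-D_0)$ is trivial and then ``take $s$ to be any non-vanishing section.'' But \eqref{compat} is only a degree-zero statement; for a degenerate component of positive genus it does not by itself force triviality of the line bundle. What actually makes the bundle trivial, and what actually produces $s$, is the morphism of log-structures $f^\flat$: the paper's proof analyses the local charts of $\cM_C$ and $f^*\cM^{0,\infty}$ at regular points, markings, and nodes of a degenerate component $C_2$ (using \cite[Theorem 1.3]{Kato}, the degeneracies $e_{C_v}$, and the free constant $\lambda$ in \eqref{freechoice}), and shows that $f^\flat|_{C_2}$ is precisely a cosection of $\omega^k_{C_2}$ with poles and zeros of the prescribed orders, i.e.\ a non-vanishing section of $\omega_{C_2}^{-k}(D_0)$. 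Your proposal never touches $f^\flat$ on degenerate components, so it establishes neither that $\xi$ determines $s$ there nor why the bundle is trivial; it only proves that \emph{if} a non-vanishing section exists it is unique up to scalar, which is the easy half. The same gap resurfaces in your reconstruction: on a degenerate component the schematic map must be the constant $0$- or $\infty$-section, but your formula $s_2:=s_1\otimes s^{-1}$ yields a non-vanishing $s_2$ and hence a non-constant map; the correct move is to keep $f$ constant and instead manufacture $f^\flat$ from $s$, inverting the chart computation. ``Reading $f^\flat$ off uniquely from $G_\xi$'' cannot work, since $f^\flat$ on degenerate components is exactly the carrier of the continuous data that $s$ is supposed to encode.
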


In particular, a necessary condition for the existence of a $k$-log-canonical divisor with a given marked graph is the compatibility
\begin{equation}\label{compat}
\sum_{h \to v} \mathbf{c}_v = k(2 g_v - 2 + n_v)
\end{equation}
for each vertex $v$, where the sum is taken over all the half-edges $h$ incident to $v$.

\begin{proof}
We start with some comments about the log-structures.
First, observe that, by the minimality condition, the monoid $\overline{\cM}_S$ is completely determined by the marked graph $G_\xi$. It is a discrete monoid, and it is generated by the degeneracies $e_{C_v}$ of the irreducible components $C_v$ of $C$ and by the elements $e_p$ smoothing the nodes $p \in C$.
We fix representatives of these elements in the monoid $\cM_S$.

Secondly, on a sufficiently small \'etale neighborhood of a point $x \in C$ which is not a marking and not a node, the monoid $\cM_C(U)$ is generated by the invertible functions $\cO^*_C(U)$ and by the elements of $\overline{\cM}_S$.
If $x$ is a marking, we need to add in the generators any local coordinate $u$ centered at $x$.
If $x$ is a node, we need to add in the generators any local coordinates $u,v$ of the two branches. We can choose these local coordinates such that the relation $e_x = u \cdot v \in \cM_C$ holds, where $e_x \in \cM_S$ is the element smoothing the node.

Thirdly, on a sufficiently small \'etale neighborhood of a point $x \in C$ such that $f(x) \notin \{ 0 , \infty \}$, the monoid $f^*\cM^{0,\infty}$ is equal to $\cO^*_C(U)$.
If $f(x)=0$, then we need to add in the generators any local coordinate $w$ on a neighborhood of $f(U)$ such that the image of $U$ by the zero section has equation $0(U)= \left\lbrace w=0 \right\rbrace$.
In particular, it is also a local coordinate of the line bundle $\omega_C^k$.
If $f(x)=\infty$, then we need to add in the generators any local coordinate $w$ on a neighborhood of $f(U)$ such that the image of $U$ by the infinity section has equation $\infty(U)= \left\lbrace w=0 \right\rbrace$.
In particular, it is also a local coordinate of the line bundle $\omega_C^{-k}$.

Fourth, the morphism of log-structures $f^\flat \colon f^*\cM^{0,\infty} \to \cM_C$ is always trivially determined on the subsheaf $\cO_C^* \subset f^*\cM^{0,\infty}$, because it coincides with the embedding $\cO^*_C \subset \cM_C$.
Therefore, we see that on any \'etale open subset $U \subset C$ such that $f(U)$ intersects the sections $0$ and $\infty$ on a finite set of points, the morphism $f^\flat$ is uniquely determined by the schematic map $f$.
In particular, it is the case on the complement in $C$ of the images via $\nu$ of all the degenerate connected components of the curve $\widetilde{C}$.

Let us consider a connected component $\widetilde{C}_1$ of the curve $\widetilde{C}$ on which the degeneracy is zero.
Then for any non-marked point $x \in \widetilde{C}_1$, we have
$f(\nu(x)) \notin \{ 0 , \infty \},$
and at every marked point of the curve $\widetilde{C}_1$, the multiplicity of the intersection of $f(\nu(\widetilde{C}_1))$ with the divisors $\left[0\right]$ and $\left[\infty\right]$ is given by the contact order.
Thus it is clear that the schematic map $f$ restricted to $\nu(\widetilde{C_1})$ is equivalent to a non-vanishing global section
$$s \in H^0(\widetilde{C}_1,\omega_{\widetilde{C}_1}^k(D_\infty - D_0)).$$
We have seen that, in the non-degenerate case, all the information is contained in the schematic map.

On a degenerate component $C_2$, on the contrary, the schematic map $f$ is trivial, since it is equal to the section $0$ or $\infty$.
Therefore, all the information is contained in the morphism of log-structures $f^\flat \colon f^*\cM^{0,\infty} \to \cM_C$.
Let us treat the case where $f(C_2)=0$, the other case is similar.
We follow the proof of \cite[Proposition 5.2.4]{QChen} and we use the notations of the comments in the beginning of this proof.
Fix a point $x \in C_2$ and a sufficiently small \'etale neighborhood $U$ of $x \in C$, and consider the three following situations:
\begin{itemize}
\item Assume that $x$ is not a marking and not a node. We have
$$f^\flat(w) = e_{C_2} \cdot h \in \pi^*\cM_S(U),$$ where $e_{C_2}$ is the degeneracy of the component $C_2$ and $h \in \cO_C^*(U)$.
\item Assume that $x$ is a marking. For any local coordinate $u$ centered at $x$, we have
$$f^\flat(w)=e_{C_2} \cdot u^{c_x} \cdot h \in \cM_C(U),$$ where $c_x$ is the contact order at the marking and $h \in \cO_C^*(U)$.
\item Assume that $x$ is a node and denote by $C_3$ the other component attached to $x$. We have local coordinates $u,v$, with $u$ a local coordinate on $C_2$ near $x$ and $v$ is a local coordinate on $C_3$ near $x$, such that
$$e_x = u \cdot v \in \cM_C(U) \quad \textrm{and} \quad f^\flat(w)= \left\lbrace
\begin{array}{ll}
e_{C_2} \cdot u^{c_x} \cdot h, & \textrm{if $c_x \geq 0$,} \\
e_{C_3} \cdot v^{-c_x} \cdot h, & \textrm{if $c_x \leq 0$,}
\end{array}
\right.
$$ where $e_x \in \pi^*\cM_S$ is the element smoothing the node, $e_{C_2}, e_{C_3}$ are the degeneracies of the components $C_2, C_3$, $c_x \in \ZZ$ is the contact order at the node on the branch corresponding to $C_2$, and $h \in \cO_C^*(U)$.
By the minimality condition, we have
\begin{equation}\label{freechoice}
e_{C_3}= \lambda \cdot e_{C_2} \cdot e_x^{c_x} \in \cM_S \quad \textrm{or} \quad e_{C_2}=\lambda \cdot e_{C_3} \cdot e_x^{c_x} \in \cM_S,
\end{equation}
depending on the sign of the contact order $c_x$, with $\lambda \in \CC^*$.
\end{itemize}
Therefore, we can interpret the restriction of the morphism of log-structures $f^\flat$ to the component $C_2$ as a cosection of the line bundle $\omega_{C_2}^k$ vanishing or having poles at the markings and the nodes of the component $C_2$ with prescribed orders given by the contact orders.
It is then a non-vanishing section of $\omega_{C_2}^{-k}(D_0)$.
Note that by the free choice of the constant $\lambda \in \CC^*$ in the equation \eqref{freechoice}, no compatibility condition is asked at the nodes.
By an automorphism of the $k$-log-canonical divisor $\xi$, we are free to rescale every non-zero degeneracy independently, so that we can rescale the section $s \in H^0(\widetilde{C},\omega_{\widetilde{C}}^k(D_\infty - D_0))$ corresponding to $\xi$ independently on each degenerate connected component of the curve $\widetilde{C}$.
\end{proof}

\subsection{Rubber part of the moduli space}\label{rubber}
In Proposition \ref{detail}, we have seen that a $k$-log-canonical divisor $\xi$ over a geometric point $S$ with a given marked graph $G_\xi$ is uniquely determined on the degenerate components of the source curve, since the automorphism group $\mathcal{A}ut_S(\xi)(S)$ acts on $\xi$ by rescaling the morphism $f^\flat$ independently on each degenerate component.
Nevertheless, we still have some freedom on the non-degenerate components, since the schematic map $f$ is not fixed.
Here, the idea is to rescale as well the $k$-log-canonical divisor $\xi$ on the non-degenerate components, independently on each component. As a consequence, such a $k$-log-canonical divisor will be uniquely determined by its marked graph.

\begin{dfn}\label{Cstaradm}
A $k$-log-canonical divisor $\xi$ over a scheme $S$ is $\CC^*$-admissible if, for every geometric point $s \in S$, the induced schematic morphism
$$f_s \colon C_s \to P_s$$
is constant equal to the section $0$ or $\infty$ on each stable irreducible component of the curve $C_s$.
When a $k$-log-canonical divisor is $\CC^*$-admissible, minimal, and stable, we call it a rubber $k$-canonical divisor.
\end{dfn}

\begin{rem}
Take a $k$-log-canonical divisor $\xi$ over a geometric point and assume that there is a stable irreducible component $C_v$ of the source curve $C$ where the schematic map $f$ is constant, but is not equal to the sections $0$ or $\infty$ on $C_v$.
Then the component $C_v$ is not degenerate and the contact orders at its special points are all zero.
By the compatibility condition \eqref{compat} and by the stability of $C_v$, we deduce that the integer $k$ has to be zero.
\end{rem}

\begin{rem}\label{P1}
For a stable $k$-log-canonical divisor $\xi$ over a scheme $S$, the only unstable components of the source curve $C_s$ of a geometric fiber over $s \in S$ are of type $(\PP^1,0,\infty)$, and that the schematic map $f_s$ is then non-constant, see Lemma \ref{stable}.
Precisely, on these unstable components, the schematic map is of the form
\begin{equation*}
\begin{array}{rccc}
g_\lambda \colon & \PP^1 & \to & \PP^1 \times \PP^1 \\
& \left[ x : y \right] & \mapsto & \left[ x : y \right] , \left[ x^c : \lambda y^c \right]
\end{array}
\end{equation*}
where $c \in \NN$ is the contact order at the two special points and $\lambda \in \CC^*$.
Moreover, there are exactly $c$ distinct automorphisms
$\Phi \colon \left[ x : y \right] \mapsto \left[ x : \mu y \right]$
of $(\PP^1,0,\infty)$ sending $g_\lambda$ to $g_1$, choosing a $c$-rooth $\mu$ of $\lambda$.
Therefore, these non-degenerate unstable components are naturally and independently rescaled by elements of $\mathcal{A}ut_S(\xi)(S)$.
Furthermore, we naturally associate to $\xi$ a stable map to the projective line $\PP^1$, i.e.~an element in $\overline{\cM}_{g,n}(\PP^1,d)$, where $d$ is the degree of the stable map. The stable map is invariant under the natural $\CC^*$-action on $\overline{\cM}_{g,n}(\PP^1,d)$. Observe also that the integer $d$ corresponds to the sum of the contact orders over all the unstable components, and it is constant in families.
\end{rem}

\begin{cor}\label{detailCstar}
Up to automorphisms, a rubber $k$-log-canonical divisor $\xi$ is uniquely determined by its marked graph $G_\xi$ and is described by any non-vanishing global section
$$s \in H^0(\widetilde{C},\omega_{\widetilde{C}}^k(D_\infty - D_0)),$$
with the notations of Proposition \ref{detail}.
\end{cor}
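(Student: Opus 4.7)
My plan is to deduce this corollary essentially as a direct consequence of Proposition \ref{detail} combined with the additional rescaling freedom afforded by the rubber condition and by the description of unstable components in Remark \ref{P1}.

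First, I would apply Proposition \ref{detail} to an arbitrary rubber $k$-log-canonical divisor $\xi$ with marked graph $G_\xi$. This already yields the equivalence with a non-vanishing global section $s \in H^0(\widetilde{C},\omega_{\widetilde{C}}^k(D_\infty - D_0))$ and shows that the restriction of $s$ to the degenerate connected components of $\widetilde{C}$ is unique up to an automorphism of $\xi$ (which acts by independently rescaling the element $e_v \in \cM_S$ attached to each degenerate irreducible component). So only the restriction to the non-degenerate components of $\widetilde{C}$ needs to be shown to be unique up to automorphism.

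Next, I would analyze what a non-degenerate irreducible component $C_v$ looks like in the rubber setting. By Definition \ref{Cstaradm}, on every \emph{stable} irreducible component of $C$ the schematic map $f$ is identically equal to the $0$ or the $\infty$-section, hence such components are degenerate by Remark \ref{specialpoints}. Therefore every non-degenerate component of $C$ is \emph{unstable}, and by Lemma \ref{stable} it must be of type $(\PP^1,0,\infty)$. Moreover, by the compatibility \eqref{compat} on such a component, the two contact orders at the special points are equal to some positive integer $c$ (they sum to $k(2g_v-2+n_v) = 0$, and we use that the component is non-degenerate to exclude the case $c=0$).

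Then, I would invoke Remark \ref{P1}, which describes the schematic map on such a non-degenerate unstable $(\PP^1,0,\infty)$-component as $g_\lambda \colon [x:y] \mapsto ([x:y],[x^c:\lambda y^c])$ for a single parameter $\lambda \in \CC^*$, and which exhibits exactly $c$ automorphisms of $(\PP^1,0,\infty)$ sending $g_\lambda$ to $g_1$. Each such automorphism lifts to an element of $\mathcal{A}ut_S(\xi)(S)$ acting only on that component, so on each unstable non-degenerate component we may normalize $\lambda$ to $1$ by a suitable automorphism. Combining this with the rescaling freedom on degenerate components from Proposition \ref{detail}, I conclude that the whole section $s$ is determined up to the action of $\mathcal{A}ut_S(\xi)(S)$ by the marked graph $G_\xi$ alone, and conversely that any non-vanishing global section of $\omega_{\widetilde{C}}^k(D_\infty - D_0)$ compatible with $G_\xi$ produces such a rubber $k$-log-canonical divisor. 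The main (minor) subtlety is to check that the independent rescalings on each unstable and each degenerate component genuinely come from honest automorphisms $(\rho,\theta) \in \mathcal{A}ut_S(\xi)(S)$ — this is the step that one must verify carefully, but it follows from Remarks \ref{DFaut} and \ref{P1} together with the freedom \eqref{freechoice} in choosing the constant $\lambda \in \CC^*$ at each node.
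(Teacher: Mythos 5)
Your proof is correct and follows essentially the same line of reasoning that the paper outlines in the prelude to this corollary (the paragraph opening Section \ref{rubber}, together with Definition \ref{Cstaradm} and Remark \ref{P1}): Proposition \ref{detail} handles the degenerate components, $\CC^*$-admissibility forces every non-degenerate component to be unstable of type $(\PP^1,0,\infty)$ with equal nonzero contact orders, and Remark \ref{P1} provides the component-by-component rescaling that eliminates the remaining parameter $\lambda$. One small precision worth noting: the exclusion of the case $c=0$ on an unstable non-degenerate $(\PP^1,0,\infty)$-component is more directly a consequence of stability (no contractions, Lemma \ref{stable}) than of non-degeneracy alone, though the two are linked in this situation; and the rescalings you describe are really isomorphisms between divisors with the same marked graph rather than automorphisms of a fixed $\xi$, which is how ``up to automorphisms'' in the statement should be read.
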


The schematic map $f$ of a rubber $k$-log-canonical divisor $\xi$ over a geometric point alternates between the sections $0$ and $\infty$ as follows, where the horizontal curves are all stable and the vertical lines are of the type $(\PP^1,0,\infty)$.

\begin{center}
\begin{tikzpicture}[scale=1]
\coordinate (A1) at (0,0);
\coordinate (A2) at (1+0.2,0);
\coordinate (A3) at (1-0.2,0);
\coordinate (A4) at (2+0.2,0);
\coordinate (A5) at (2,-0.2);
\coordinate (A6) at (2,1+0.2);
\coordinate (A7) at (2-0.2,1);
\coordinate (A8) at (3+0.2,1);
\coordinate (A9) at (3,1+0.2);
\coordinate (A10) at (3,-0.2);
\coordinate (A11) at (3-0.2,0);
\coordinate (A12) at (4+0.2,0);
\coordinate (A13) at (4-0.2,0);
\coordinate (A14) at (6+0.2,0);
\coordinate (A15) at (5,-0.4);
\coordinate (A16) at (5,1+0.2);
\coordinate (A17) at (6-0.2,1);
\coordinate (A18) at (7+0.2,1);
\coordinate (A19) at (7-0.2,1);
\coordinate (A20) at (8+0.2,1);
\coordinate (A21) at (6,-0.2);
\coordinate (A22) at (6,1+0.2);
\coordinate (A23) at (8,1+0.2);
\coordinate (A24) at (8,-0.2);

\draw[-] (A1) to[bend right=20] (A2);
\draw[-] (A3) to[bend right=20] (A4);
\draw[-] (A5) to[bend right=0] (A6);
\draw[-] (A7) to[bend left=20] (A8);
\draw[-] (A9) to[bend right=0] (A10);
\draw[-] (A11) to[bend right=20] (A12);
\draw[-] (A13) to[bend right=20] (A14);
\draw[-] (A15) to[bend right=0] (A16);
\draw[-] (A17) to[bend left=20] (A18);
\draw[-] (A19) to[bend left=20] (A20);
\draw[-] (A21) to[bend left=0] (A22);
\draw[-] (5-0.1,1) to[bend left=0] (5+0.1,1);
\draw[-] (A23) to[bend left=0] (A24);
\draw[-] (8-0.1,0) to[bend left=0] (8+0.1,0);

\draw[-] (2.5,1+0.145+0.1) to[bend left=0] (2.5,1+0.145-0.1);
\draw[-] (4.5,0-0.19+0.1) to[bend left=0] (4.5,0-0.19-0.1);

\draw (9,0) node[right]{$0$-section};
\draw (9,1) node[right]{$\infty$-section};
\end{tikzpicture}
\end{center}

\begin{pro}\label{closedcond}
The $\CC^*$-admissibility condition is a closed condition.
Precisely, let $\xi \in \cD(S)$ over a scheme $S$ such that $\xi_s$ is not $\CC^*$-admissible for some geometric point $s \in S$.
Then there is an \'etale neighborhood of $s \in S$ where none of the geometric fibers are $\CC^*$-admissible.
\end{pro}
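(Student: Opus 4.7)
My plan is to produce, from any witness of failure of $\CC^*$-admissibility on the fiber over $s$, an open neighborhood of $s$ on which the failure persists fiberwise. Concretely, I will find a smooth unmarked point $x$ of a stable irreducible component $C_v \subset C_s$ with $f_s(x) \notin \{0,\infty\}$, extend $x$ to a section of the universal curve over an étale neighborhood of $s$, and show that the analogous witness holds on a whole neighborhood.

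To produce $x$: by hypothesis some stable component $C_v$ has $f_s|_{C_v}$ neither identically $0$ nor identically $\infty$. Writing $f_s|_{C_v}$ via a line bundle $\cL$ on $C_v$ and a pair of sections $(a,b)$ with $a \in H^0(C_v,\cL)$, $b \in H^0(C_v, \cL\otimes \omega^{-k})$ and no common zero, this hypothesis forces both $a$ and $b$ to be non-zero; so the open locus $\{a \neq 0\} \cap \{b \neq 0\} \subset C_v$ is dense and, after removing the finitely many markings and nodes, still non-empty. Any closed point $x$ in it is smooth, unmarked, and satisfies $f_s(x) \notin \{0,\infty\}$. Since the family $C \to S$ is smooth at $x$, after passing to an étale neighborhood $U$ of $s$ I can lift $x$ to a section $\tilde x \colon U \to C$ of the universal curve, valued in smooth unmarked points of every fiber. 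The composite $\varphi := f \circ \tilde x \colon U \to P$ sends $s$ outside the closed subscheme $Z := \{0\}\cup\{\infty\} \subset P$, so $U' := U \setminus \varphi^{-1}(Z)$ is an open neighborhood of $s$ on which $f_{s'}(\tilde x(s')) \notin \{0,\infty\}$ for every $s' \in U'$.

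It remains to check that on $U'$ the point $\tilde x(s')$ lies in a stable irreducible component of $C_{s'}$. Being smooth and unmarked, $\tilde x(s')$ belongs to a unique irreducible component $C'_v \subset C_{s'}$; in a prestable family, nearby fibers differ from $C_s$ only by smoothing of nodes, so $C'_v$ arises from $C_v$ by absorbing zero or more of its neighbors through boundary-node smoothings. By Lemma \ref{stable} the only unstable neighbors allowed are of type $(\PP^1,0,\infty)$ with $2g-2+n=0$; additivity of $2g-2+n$ under node smoothing then forces $C'_v$ to inherit the strict positivity of $C_v$, hence to be stable. Thus $f_{s'}|_{C'_v}$ takes a value outside $\{0,\infty\}$ at $\tilde x(s')$, so $\xi_{s'}$ is not $\CC^*$-admissible. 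The main subtlety I anticipate is precisely this last topological-type bookkeeping, but it is resolved by the explicit restriction on the allowed unstable components.
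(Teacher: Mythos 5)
Your proof is correct and follows the same strategy as the paper: locate a smooth unmarked point on a sign-$0$ stable component of $C_s$ at which $f_s$ lands off $\{0,\infty\}$, spread it out to a local section of the universal curve, shrink so the section stays off $\{0\}\cup\{\infty\}$, and conclude the witness persists fiberwise. The one place you diverge is the final stability check on nearby fibers: the paper appeals to the closedness of the locus $C^{\mathrm{un}}\subset C$ of unstable $(\PP^1,0,\infty)$ components (so $U$ can be shrunk to avoid it), while you argue via additivity of $2g-2+n$ under node smoothing together with Lemma \ref{stable}; this is an equally valid, somewhat more self-contained route to the same conclusion.
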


\begin{proof}
The $\CC^*$-admissibility condition is a condition on the schemes.
Since the $k$-log-canonical divisor $\xi_s$ is not $\CC^*$-admissible, then there is a stable irreducible component $C^0_s \subset C_s$ with sign $0$, i.e.~where the schematic map is not constant equal to the section $0$ or $\infty$.

On the base scheme $S$, we have a family of curves $\pi \colon C \to S$ and a schematic map $f \colon C \to P$.
Take a point $x \in C^0_s \subset C$ away from the nodes, a local section $\sigma \colon W \to C$ of $\pi$ on an \'etale neighborhood $W$ of $s \in S$ such that $\sigma(s)=x$, and a sufficiently small \'etale neighborhood $U$ of $x \in C$ containing no nodes of the curves. Shrinking $W$, we may assume that $W = \sigma^{-1}(U)$.
Thus, for any geometric point $t \in W$, the \'etale open subset $U_t \subset C_t$ lies entirely inside an irreducible component, that we denote by $C^0_t$.

Since $f(x) \notin \left\lbrace 0(x),\infty(x) \right\rbrace$, we may assume that the image $f(U)$ is disjoint from $0(U)$ and from $\infty(U)$.
Therefore, for any geometric point $t \in W$, the schematic map $f_t$ is not constant equal to the sections $0$ or $\infty$ on the irreducible component $C^0_t \subset C_t$.
To conclude, we need to prove that the component $C^0_t$ is stable, i.e.~not of type $(\PP^1,0,\infty)$.
Indeed, the unstable components of type $(\PP^1,0,\infty)$ form a closed subspace $C^\mathrm{un} \subset C$, and since the component $C^0_s$ is stable, we have $x \notin C^\mathrm{un}$, so that we can assume $C^\mathrm{un} \cap U = \emptyset$.
\end{proof}

\begin{rem}
The $\CC^*$-admissibility condition is not an open condition.
As an example, take $k=0$ and consider the family over $\CC^*$ of usual maps to $\PP^1$ given by $(\PP^1,0,1,\infty;f_\lambda)$ with
$f_\lambda \colon \left[ x : y \right] \to \left[ x : \lambda y \right]$.
It is isomorphic to the family $(\PP^1,0,\lambda^{-1},\infty,f_1)$, and the limit $\lambda \to 0$ is the unstable curve
$$(\PP^1,0,1,\infty) \cup_{0 \sim \infty} (\PP^1,0,\infty)$$
together with the map to $\PP^1$, equal to $\infty$ on the stable component and given by the identity on the unstable component.
Therefore, the limit $\lambda =0$ is $\CC^*$-admissible, but it is false for $\lambda \neq 0$.
\end{rem}

\begin{dfn}
For any topological type $\Gamma=(g,n,\vec{\mathbf{c}})$, the moduli space $\cD^{* (k)}_\Gamma$ (resp.~$D^{* (k)}_\Gamma$) is the closed substack of the moduli space $\cD^{(k)}_\Gamma$ (resp.~ $D^{(k)}_\Gamma$) whose $S$-points are $\CC^*$-admissible.
\end{dfn}

\begin{rem}
We have several natural morphisms
$$\mathcal{D}_\Gamma^{* (k)} \to D_\Gamma^{* (k)} \hookrightarrow \overline{\cM}_{g,n}(\PP^1) \to \overline{\cM}_{g,n}$$
obtained by forgetting the log-structures, then using Remark \ref{P1}, and eventually forgetting the maps and stabilizing the source curve.
\end{rem}

\begin{dfn}
For the underlying stable curve of a stable and $\CC^*$-admissible $k$-log-canonical divisor, we say that a node or a marking is twisted of order $c$ if it is obtained after contracting an unstable component with contact orders $c$.
By convention, the twist of all other nodes and markings is zero.
Observe that a stable curve with twists at some markings and nodes have a natural structure of orbifold stable curves.
\end{dfn}

\begin{rem}\label{twisting}
The twisting order of a marking equals the contact order of the corresponding marking in the $k$-log-canonical divisor, so that it is constant in families.
Moreover, since the degree of the stable map to $\PP^1$ equals the sum of all the twisting orders and is a topological invariant of the stable map, then the sum of the twisting orders of the nodes is also constant in families.
In particular, if a stable and $\CC^*$-admissible $k$-log-canonical divisor $\xi$ over a scheme $S$ contains a stable source curve, then every curve of the family is stable.
More generally, let $\xi$ be a $k$-log-canonical divisor over a base scheme $S$, and $s \in S$ be a geometric point such that $C_s$ has a node $p$.
By \cite[Lemma 3.2.9]{QChen}, there is an \'etale open neighborhood of $s$ such that over each geometric fiber we have that either the node $p$ is smoothed out, or its contact order remains the same.
As a consequence, the twisted nodes are never smoothed out in the moduli space $\cD^{* (k)}_\Gamma$.
\end{rem}

\begin{dfn}\label{topoltwist}
The dual graph of a generic point in $\cD^{* (k)}_\Gamma$ is enhanced with positive integers at the unstable vertices corresponding to the twist orders.
We sometimes add this extra information to the topological data $\Gamma$.
\end{dfn}

\begin{rem}
It is possible to reconstruct uniquely an element in $D^{* (k)}_\Gamma$ from its underlying family of stabilized curves, and its topological type $\Gamma$.
Moreover, given a $S$-point in $\cD^{* (k)}_\Gamma$, we can normalize all the sections corresponding to twisted markings and nodes, and then we obtain $S$-points of moduli spaces $\cD^{* (k)}_{\Gamma_j}$ with different topological data $\Gamma_j$ and such that the source curve of the generic element is a smooth curve.
Hence, we have natural isomorphisms of the form
\begin{equation}\label{decom}
D^{* (k)}_\Gamma \simeq \prod_j D^{* (k)}_{\Gamma_j}.
\end{equation}
%%%%%%%%%%%% Prendre en compte les automorphismes du graphe. Mais c'est un arbre, donc pas d'automorphismes? Et les multiplicités? Travailler avec les champs réduits?
\end{rem}

We introduce the algebraic log stack constructed in \cite{Olsson2}
$$\cM := \mathcal{L}og^{\mathrm{fs}}_{(\overline{\cM}_{g,n},\cM_{\overline{\cM}_{g,n}}^\NN)},$$
where the log-structure on the moduli space of stable curves is
$$\cM^\NN_{\overline{\cM}_{g,n}}:=\cM_{\overline{\cM}_{g,n}} \oplus_{\cO^*_{\overline{\cM}_{g,n}}} \NN.$$
Therefore, an $S$-point of the log-stack $\cM$ is a family of stable curves $\pi \colon C \to S$ together with the choice of a fine and saturated log-structure $\cM_S$ on the base scheme $S$ and a morphism $\cM_S^{C/S} \oplus_{\cO^*_S} \NN \to \cM_S$; and morphisms in this category are strict.
The underlying stack is denoted by $\underline{\cM}$.

\begin{cor}\label{DMstar}
The moduli space $\mathcal{D}_\Gamma^{* (k)}$ is a proper Deligne--Mumford stack and the natural map $\mathrm{FL} \colon \mathcal{D}_\Gamma^{* (k)} \to D_\Gamma^{* (k)}$, forgetting the log-structures, is representable and finite.
Moreover, there exists a perfect obstruction theory on the moduli space $\cD_\Gamma^{*(k)}$, relative to the algebraic stack $\underline{\cM}$, and the moduli space $\cD_\Gamma^{*(k)}$ carries a virtual fundamental cycle $\left[\cD_\Gamma^{* (k)}\right]^\mathrm{vir} \in A_{\mathrm{vdim}}(\cD_\Gamma^{* (k)}),$
of dimension
$\mathrm{vdim} = 2g-2+n - \# V(\Gamma),$
where $\#V(\Gamma)$ denotes the number of vertices in the stabilized graph of the topological type $\Gamma$.
Furthermore, in genus $0$, the moduli space is log-smooth.
\end{cor}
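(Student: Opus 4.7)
The strategy is to leverage the machinery already established in Theorem \ref{DMstack}, adapting it to the closed $\CC^*$-admissibility condition and to the different base stack. First, by Proposition \ref{closedcond}, $\CC^*$-admissibility is a closed condition, so $\cD_\Gamma^{*(k)}$ embeds as a closed substack of the proper Deligne--Mumford stack $\cD_\Gamma^{(k)}$ and inherits properness and the Deligne--Mumford property. Similarly, $D_\Gamma^{*(k)} \hookrightarrow D_\Gamma^{(k)}$ is a closed immersion, and the restriction of the representable finite forgetful morphism $\mathrm{FL}$ of Theorem \ref{DMstack} to the closed substack $\cD_\Gamma^{*(k)}$ is again representable and finite.

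For the perfect obstruction theory, the plan is to redo the construction at the end of the proof of Theorem \ref{DMstack}, but now relative to $\underline{\cM}$ instead of $\underline{\kM}$. The passage from prestable to stable curves precisely reflects the rubber condition: by Remark \ref{P1}, a rubber $k$-log-canonical divisor produces, after stabilization, a family of stable log-curves, while the non-degenerate unstable $(\PP^1,0,\infty)$ components and their intrinsic $\CC^*$-scalings are absorbed into the supplementary $\NN$ summand of the log structure of $\cM$. Concretely, one forms a universal log $\PP^1$-bundle $\kP'$ over the universal log-curve $\kC' \to \cM$, together with a universal evaluation morphism $f \colon \cC \to \kP'$; the canonical map of log cotangent complexes $Lf^* L^\bullet_{\kP'/\kC'} \to L^\bullet_{\cC/\kC'}$ then produces, after applying $R\pi_*(\,\cdot\, \otimes \omega_\pi)$, a morphism $\phi \colon E^\bullet \to L^\bullet_{\underline{\cD_\Gamma^{*(k)}}/\underline{\cM}}$ with $E^\bullet$ of perfect amplitude in $[0,1]$. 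The verification that $\phi$ is a perfect obstruction theory follows the argument at the end of Theorem \ref{DMstack} verbatim, using Olsson's obstruction theory for log morphisms and the strictness of $\cD_\Gamma^{*(k)} \to \cM$.

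The virtual dimension is computed by Riemann--Roch: the log-tangent bundle $\Theta_{\kP'/\kC'}$ of the $\PP^1$-bundle with log-structure along the $0$- and $\infty$-sections pulls back along any section to a degree-zero line bundle on each stabilized component, so on a generic geometric point the rank of $E^\bullet$ combines with $\dim \underline{\cM}$ and the contribution of the boundary strata (tracked by $\#V(\Gamma)$ via the decomposition \eqref{decom}) to give $\mathrm{vdim} = 2g-2+n - \#V(\Gamma)$. The genus-$0$ log-smoothness follows by showing that the obstruction sheaf $R^1\pi_* f^* \Theta_{\kP'/\kC'}$ vanishes, which reduces component-by-component to the vanishing of $H^1(\PP^1, \cO)$ and $H^1(\PP^1, \cO)$-type groups for degree-zero line bundles. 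The main obstacle will be rigorously setting up the obstruction theory relative to the correct base $\underline{\cM}$ and correctly accounting, in the virtual dimension count, for the twisted nodes that never smooth (Remark \ref{twisting}) together with the extra $\NN$ factor in the log-structure of $\cM$, which together explain the appearance of $\#V(\Gamma)$ rather than the naive $3g-3+n$ contribution from $\overline{\cM}_{g,n}$.
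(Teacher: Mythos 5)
The first part of your proposal (closed immersion, properness, DM property, restriction of $\mathrm{FL}$) is exactly what the paper does and is correct. The second part, on the perfect obstruction theory, has a genuine gap. You invoke the decomposition \eqref{decom} only as bookkeeping for the dimension count, but in the paper it is load-bearing for even setting up the obstruction theory. Without it, your ``universal evaluation morphism $f \colon \cC \to \kP'$'' is not well-defined in the way you need: the universal curve $\cC$ over $\cD^{*(k)}_\Gamma$ contains the unstable $(\PP^1,0,\infty)$ components, which are contracted under stabilization, while $\kP'$ lives over the universal \emph{stable} curve over $\cM$; the line bundle defining the $\PP^1$-bundle on the prestable curve and on its stabilization are not identified, so $f$ is not simply a pullback of a log-section. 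Also, $\cD^{*(k)}_\Gamma \subset \cD^{(k)}_\Gamma$ is a \emph{closed} substack, not open, so the obstruction theory of Theorem \ref{DMstack} cannot simply be restricted or imitated ``verbatim'': one has to build it from scratch on the closed locus, and this is exactly why the paper must first reduce, via \eqref{decom}, to factors where every geometric fiber is either of type $(\PP^1,0,\infty)$ (handled by hand as $B\mu_c$ with trivial obstruction theory) or a smooth stable curve.

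In the stable factors, the decisive observation that your proposal omits is that the evaluation morphism \emph{factors through the zero- or infinity-section} $\kC^0 \subset \kP$, a closed substack of the $\PP^1$-bundle endowed with the log-structure $\cM_\kC \oplus_{\cO^*_\kC} 0^*\cM^{0,\infty}$. The relative log-cotangent sheaf $\Omega^1_{\kC^0/\kC}$ is $\cO_C$, which yields a concrete identification $E^\bullet \simeq \left[\cO_{\cD^*} \to \mathbb{E}^\vee\right]^\vee$ with $\mathbb{E}$ the Hodge bundle; from this the paper computes the virtual rank ($\delta = g-1 - (3g-3+n) = -(2g-2+n)$) and produces the virtual class via Manolache's virtual pullback from $\underline{\mathcal{L}og}^{\mathrm{fs}}_{(\mathrm{pt},\NN)}$, which has pure dimension $-1$, giving $\mathrm{vdim} = 2g-3+n$ for a connected factor and the stated $2g-2+n - \#V(\Gamma)$ after multiplying out \eqref{decom}. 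Your Riemann--Roch sketch (``degree-zero line bundle on each stabilized component'') is not wrong in spirit, since $\Theta_{\kP/\kC}|_{\kC^0} \cong \Theta_{\kC^0/\kC} \cong \cO$, but without the factorization the identification with the Hodge bundle, the explicit $\delta$, and the appearance of $\#V(\Gamma)$ are left unestablished rather than proven. The same goes for genus $0$: the paper concludes log-smoothness from the vanishing of the Hodge bundle, not merely from a heuristic $H^1(\PP^1,\cO)$ argument.
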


\begin{proof}
The first part of the statement is a straightforward consequence of Theorems \ref{DMstack} and \ref{closedcond}.
For the second part, we follow the construction of the perfect obstruction theory on the stack $\mathcal{D}_\Gamma^{(k)}$, but we need to adapt it since the stack $\mathcal{D}_\Gamma^{* (k)}$ is not open in the stack $\mathcal{D}_\Gamma^{(k)}$.
By the equation \eqref{decom}, we assume that the generic source curve is smooth. The virtual fundamental class is then a product under the decomposition \eqref{decom}.

Let us first assume that every geometric source curve is of type $(\PP^1,0,\infty)$.
The schematic map $f$ is not constant and is given by Remark \ref{P1}, so that, by Proposition \ref{detail}, the morphism of log-structure $f^\flat$ is completely determined as well.
The moduli space corresponding to this case is isomorphic to the orbifold point $B\mu_c := \left[\mathrm{pt}/\mu_c \right]$, where $\mu_c$ is the cyclic group of order $c$.
We take the trivial perfect obstruction theory in that case, leading to the fundamental class of the stack $B\mu_c$.

Then we assume that the generic source curve is a smooth stable curve, so that every source curve is stable.
We may also assume that the schematic map goes to the zero section; the other case is similar.
Here, we keep the notations $\kC$, $\cC$, and $\kP$ of the proof of Theorem \ref{DMstack} for the universal objects, but considered over the log-stack $\cM$ instead of the log-stack $\kM$.
We have a commutative diagram
\begin{center}
\begin{tikzpicture}[scale=0.7]
\draw (0,2) node(A1){$\cC$}
 (2,2) node(A2){$\kC^0$}
 (2.1,1.95) node[right]{$\subset \kP$}
 (2,0) node(A3){$\kC$}
 (0,0) node(A4){$\kC$}
 (1,1) node{$\circlearrowright$}
 (1,2) node[above]{$f$}
 (1,0) node[below]{$\mathrm{id}_{\kC}$};
 
\draw[->,>=stealth] (A1) to[bend right=0] (A2);
\draw[->,>=stealth] (A1) to[bend right=0] (A4);
\draw[->,>=stealth] (A2) to[bend right=0] (A3);
\draw[->,>=stealth] (A4) to[bend right=0] (A3);
\end{tikzpicture}
\end{center}
where the log-curve $\kC^0$ denotes the universal curve $\kC$ endowed with the log-structure
$$\cM_{\kC} \oplus_{\cO^*_\kC} 0^*\cM^{0,\infty},$$
and considered as embedded in the projective bundle $\kP$.
Since the morphism $\kC^0 \to \kC$ is log-smooth, the object $L^\bullet_{\kC^0/\kC}$ is represented by a locally free sheaf $\Omega^1_{\kC^0/\kC}$ whose dual is denoted by $\Theta_{\kC^0/\kC}$.
The same proof as for Theorem \ref{DMstack} produces a perfect obstruction theory
$$\phi \colon (R\pi_* \left[ 0^*\Theta_{\kC^0/\kC} \right] )^\vee\to L^\bullet_{\underline{\cD}^*/\underline{\cM}},$$
relative to the stack of stable log-curves.
Moreover, the object $R\pi_* \left[ 0^* \Theta_{\kC^0/\kC} \right]$ has global resolutions, so that we obtain a virtual fundamental class.
We study further the relative obstruction theory in this case, to get the virtual dimension of the moduli space.

A morphism $g \colon T \to \kC^0$ corresponds to a global section $$g \colon (T,\cM_T) \to (C,\cM_C^0) ~~,~~~\cM_C^0:=\cM_C\oplus_{\cO^*_C} 0^*\cM^{0,\infty},$$ of the family of stable log-curves $\pi \colon (C,\cM_C^0) \to (T,\cM_T)$, and by the description in \cite[Proposition 3.4]{ACbook}, the sheaf of logarithmic differentials
$\Omega^1_{(C,\cM_C^0)/(C,\cM_C)}$
is the trivial line bundle $\cO_C$.
Thus, we obtain
$$\left(R\pi_* \left[ 0^* \Theta_{\kC^{0}/\kC} \right]\right)^\vee
\simeq 
\left[\cO_{\cD^*} \to \mathbb{E}^\vee \right]^\vee,$$
where the vector bundle $\mathbb{E}:=\pi_* \underline{\omega}_\pi$ is the (pull-back to $\cD^*$ of the) Hodge bundle whose sections are differential forms on the fibers. Note that we underline the canonical line bundle $\underline{\omega}_\pi$ to distinguish it from the log-canonical line bundle $\omega_\pi$.
The perfect obstruction theory on $\cD^*$ relative to the stack of stable log-curves $\underline{\cM}$ is then given by a morphism
$$\phi \colon \left[\cO_{\cD^*} \to \mathbb{E}^\vee \right]^\vee \to L^\bullet_{\underline{\cD}^*/\underline{\cM}}.$$
Note that the stack of stable log-curves $\underline{\cM}$ is not the moduli space of stable curves, but we rather have
$$\cM := \mathcal{L}og^{\mathrm{fs}}_{(\overline{\cM}_{g,n},\cM_{\overline{\cM}_{g,n}}^\NN)}.$$
We have a forgetful map $\mathrm{Log} \colon \cM \to \mathcal{L}og^{\mathrm{fs}}_{(\mathrm{pt},\NN)}$ forgetting the family $C/S$ of stable curves and the morphism $\cM_S^{C/S} \oplus_{\cO^*_S} \NN \to \cM_S$, but remembering the log-structure $\cM_S$.
Since the stack $(\overline{\cM}_{g,n},\cM_{\overline{\cM}_{g,n}}^\NN)$ is log-smooth, then the map $\underline{\mathrm{Log}}$ is a smooth and representable morphism of algebraic stacks, see \cite[Section 2.1]{QCheneval}.
Therefore, the log-cotangent complex $L_{\underline{\mathrm{Log}}}$ is represented by a locally free sheaf, that we identify as the cotangent bundle of the moduli space of stable curves $\overline{\cM}_{g,n}$
$$T^*\overline{\cM}_{g,n} = \pi_* (\underline{\omega}_\pi \otimes \omega_\pi).$$
We have a distinguished triangle of cotangent complexes on $\cD^*$
$$q^*L_{\underline{\mathrm{Log}}} \to L_{\alpha} \to L_q \to q^*L_{\underline{\mathrm{Log}}}\left[1\right]$$
associated to the composition $\alpha$ of the morphisms
$$\underline{\cD}^* \xrightarrow{q} \underline{\cM} \xrightarrow{\underline{\mathrm{Log}}}
\underline{\mathcal{L}og}^{\mathrm{fs}}_{(\mathrm{pt},\NN)}.$$
Then we obtain a map
$$\left[\pi_* (\underline{\omega}_\pi) \to \cO_{\cD^*} \right] \to \left[ \pi_* (\underline{\omega}_\pi \otimes \omega_\pi) \to 0 \right],$$
and a perfect obstruction theory on the moduli space $\cD^*$ relative to the stack $\underline{\mathcal{L}og}^{\mathrm{fs}}_{(\mathrm{pt},\NN)}$ given by the cone of this map, i.e.~by the morphism
$$F^\bullet \to L_{\underline{\cD}^*/\underline{\mathcal{L}og}^{\mathrm{fs}}_{(\mathrm{pt},\NN)}} \quad \textrm{with} \quad F^\bullet:=\left[\pi_* (\underline{\omega}_\pi) \to \cO_{\cD^*} \oplus \pi_* (\underline{\omega}_\pi \otimes \omega_\pi) \right] \left[1\right].$$
As in the proof of \cite[Proposition 5.1.2]{QCheneval}, the complex $F^\bullet$ determines a vector bundle stack $\mathbb{F}$ over the moduli space $\underline{\cD}^*$, and the pair $(\alpha,\mathbb{F})$ satisfies condition $(\star)$ from \cite[Condition 3.3]{Manolache}. We use the virtual pull-back \cite[Definition 3.7]{Manolache}
$$\alpha_\mathbb{F}^! \colon A_*(\underline{\mathcal{L}og}^{\mathrm{fs}}_{(\mathrm{pt},\NN)}) \to A_{*-\delta}(\underline{\cD}^*),$$
where the integer $\delta$ is the virtual rank of the vector stack $\mathbb{F}$; we get
$$\delta = g-1-(3g-3+n)=-(2g-2+n).$$
The stack $\underline{\mathcal{L}og}^{\mathrm{fs}}_{\mathrm{pt}}$ is of pure dimension $0$ and is stratified by global quotients, see \cite[Definition 4.5.3]{Kresch}. The stack $\underline{\mathcal{L}og}^{\mathrm{fs}}_{(\mathrm{pt},\NN)}$ is one such strata, and is of pure dimension $-1$.
It has a fundamental class $\left[ \underline{\mathcal{L}og}^{\mathrm{fs}}_{(\mathrm{pt},\NN)} \right]$, and we have
$$\left[ \cD^* \right]^\mathrm{vir} = \alpha_\mathbb{F}^! \left[ \underline{\mathcal{L}og}^{\mathrm{fs}}_{(\mathrm{pt},\NN)} \right] \in A_{2g-3+n}(\cD^*).$$
Taking the sum of all the virtual dimensions in the decomposition \eqref{decom}, we obtain
$$\mathrm{vdim} = 3g-3+n - (g + \#E(\Gamma) - h_1(\Gamma) = 2g-2+n-\#V(\Gamma).$$
For the particular genus-$0$ case, observe that the Hodge bundle $\mathbb{E}$ vanishes, so that there are no obstructions.
\end{proof}

\subsection{Twisted canonical divisors}\label{twiscan}
According to \cite[Definition 20]{PandhFark}, a divisor $$\sum_{i=1}^n \mathbf{m}_i \sigma_i$$ associated to a marked stable curve $(C,\sigma_1,\dotsc,\sigma_n)$ is twisted canonical if there exists a twist $I$ for which
$$\nu^* \cO_C \left( \sum_{i=1}^n \mathbf{m}_i \sigma_i \right) \simeq \nu^*(\underline{\omega}_C^k) \otimes \cO_{C_I} \left( \sum_{q \in N_I} I(q') \cdot q' + I(q'') \cdot q'' \right)$$
on the partial normalization $C_I$, where the line bundle $\underline{\omega}_C$ is the canonical and not the log-canonical line bundle ; we recall below the notations and the definition of a twist.

Consider a marked curve $(C,\sigma_1,\dotsc,\sigma_n)$, with dual graph $G_C$. We denote by $V(G_C)$ and $H(G_C)$ the sets of vertices and half-edges of the dual graph.
Then a function
$$I \colon H(G_C) \to \ZZ$$
is called a twist if it satisfies the balancing, vanishing, sign, and transitivity conditions as follows.
\begin{itemize}
\item Balancing: for any edge $e=(h,h')$, we have $I(h)=-I(h')$.
\item Define $\sim$ the minimal equivalence relation on the set $V(G_C)$, for which two vertices separated by a half-edge on which the function $I$ vanishes are equivalent.
\item Vanishing: if two vertices $v, v'$ are equivalent, then for any edge $e=(h,h')$ between them, we have $I(h)=I(h')=0$.
\item For two vertices $v,v'$, we declare $v > v'$ if there is an edge $e=(h,h')$ with $h$ incident to v and $h'$ incident to $v'$ such that $I(h)>0$. The sign condition requires it is a well-defined partial order on the equivalence classes of $\sim$.
\item Construct a directed graph $G_I$ by taking equivalence classes of $\sim$ for the vertices and placing a directed edge $\overline{v} \to \overline{v}'$ between two equivalence classes $\overline{v}$ and $\overline{v}'$ if there exists two representants $v>v'$ of $\overline{v},\overline{v}'$ in $G_C$ with an edge between them. 
\item Transitivity: the directed graph $G_I$ has no directed loops.
\end{itemize}
The set $N_I$ is the set of edges of the graph $G_C$ on which the function $I$ is non-zero.
The curve $C_I$ is the partial normalization $\nu \colon C_I \to C$ defined by normalizing exactly the nodes in the set $N_I$.
We denote by $q'$ and $q''$ the two preimages of a normalized node $q \in N_I$, and we also identify them with the corresponding half-edges in $H(G_C)$.
Note that the set of vertices of the graph $G_I$ is in bijection with the set of connected components of the curve $C_I$, and also that the values of the twist at the markings of the curve $C$ are irrelevant.
At last, we define the subset
$$\widetilde{\mathcal{H}}^k_g(\vec{\mathbf{m}}) = \left\lbrace (C,\sigma_1,\dotsc,\sigma_n) \in \overline{\cM}_{g,n} | \sum_{i=1}^n \mathbf{m}_i \sigma_i \textrm{ is a $k$-twisted canonical divisor} \right\rbrace$$
of $\overline{\cM}_{g,n}$, for any vector $\vec{\mathbf{m}} \in \ZZ^n$ satisfying $\sum_{i=1}^n \mathbf{m}_i = k(2g-2)$.
By \cite[Theorem 21]{PandhFark}, it is a closed substack of $\overline{\cM}_{g,n}$ with irreducible components all of dimension at least $2g-3+n$.

\begin{pro}\label{compare}
Let $\vec{\mathbf{c}}=(\mathbf{c}_1, \dotsc, \mathbf{c}_n)$ be a partition of $k (2g-2+n)$, and $C$ be a genus-$g$ stable curve with markings $\sigma_1,\dotsc,\sigma_n$.
We define $\mathbf{m}_i:=\mathbf{c}_i-1$.
We have the following equivalence:
the divisor
$$\sum_{i=1}^n \mathbf{m}_i \sigma_i$$
is a $k$-twisted canonical divisor if and only if there exists a rubber $k$-log-canonical divisor $$\xi = (\pi \colon C^\mathrm{un} \rightarrow S, (\sigma_1,\dotsc,\sigma_n), \cM_S^{C^\mathrm{un}/S} \rightarrow \cM_S ~ ; ~~  C^\mathrm{un} \xrightarrow{f} P, f^*\cM^{0,\infty} \xrightarrow{f^\flat} \cM_{C^\mathrm{un}})$$ over $S = \mathrm{Spec}(\CC)$ of topological type $(g,n,\vec{\mathbf{c}})$, with underlying prestable curve $C^\mathrm{un}$ whose stabilization is the marked curve $C$.
\end{pro}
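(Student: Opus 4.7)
The plan is to leverage Corollary \ref{detailCstar}, which says that a rubber $k$-log-canonical divisor $\xi$ over a geometric point is determined, up to isomorphism, by its marked graph $G_\xi$ and is encoded in the line-bundle identity $\omega_{\widetilde{C}}^k \simeq \mathcal{O}_{\widetilde{C}}(D_0 - D_\infty)$ on the partial normalization $\widetilde{C}$ of $C^{\mathrm{un}}$. The proof proceeds by setting up an explicit bijection between marked graphs $G_\xi$ of topological type $(g, n, \vec{\mathbf{c}})$ whose stabilization is $C$ and twists $I$ on the dual graph of $C$ witnessing that $\sum \mathbf{m}_i \sigma_i$ is $k$-twisted canonical, by translating the line-bundle identity on each side.

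In the direction from $\xi$ to a twist, one uses that the stabilization map $C^{\mathrm{un}} \to C$ contracts maximal chains of unstable $(\PP^1, 0, \infty)$-components (Remark \ref{P1}); each node of $C$ therefore corresponds either to a direct node between two stable components or to such a contracted chain, and in either case carries a well-defined signed contact order on each side. For each half-edge $h$ of the dual graph of $C$, one lifts $h$ to a half-edge $\widetilde{h}$ of $C^{\mathrm{un}}$ on the stable side and defines $I(h) := -\mathbf{c}_{\widetilde{h}}$. The balancing of $I$ follows immediately from $c_h + c_{h'} = 0$ in $G_\xi$, while the vanishing, sign, and transitivity conditions translate into the observation that the signs $\mathbf{s}_v \in \{+,-\}$ on stable components induce a partial order on $\sim$-equivalence classes of components of $C$ refining the one produced by $I$. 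Restricting the rubber identity to each stable component and rewriting it with $\omega_{\widehat{C}_v} = \underline{\omega}_{\widehat{C}_v}(\sum_x p_x)$ and $\nu^*\underline{\omega}_C|_{\widehat{C}_v} = \underline{\omega}_{\widehat{C}_v}(\text{node preimages})$ then yields the twisted canonical identity for the chosen $I$.

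For the reverse direction, starting from a twist $I$ on $C$, the sign condition on $I$ gives a partial order on $\sim$-equivalence classes of irreducible components, which one extends to a sign $\mathbf{s}_v \in \{+, -\}$ on each component. At every node $q$ of $C$ with $I(q) \neq 0$, one inserts a chain of unstable $(\PP^1, 0, \infty)$-components of common contact order $|I(q)|$ and of length chosen so that the alternating signs along the chain match the signs of the two incident stable components (Remark \ref{P1}). On each stable component $C_v$, the twisted canonical identity restricted to $\widehat{C}_v$ is, via the relations $\omega_C = \underline{\omega}_C(\sigma_1 + \cdots + \sigma_n)$ and $\mathbf{m}_i = \mathbf{c}_i - 1$, exactly the cosection datum used in Proposition \ref{detail} to reconstruct a $k$-log-canonical divisor of sign $\mathbf{s}_v$. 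Gluing these data across the inserted $\PP^1$-chains produces a $\CC^*$-admissible, minimal, stable $k$-log-canonical divisor of topological type $(g, n, \vec{\mathbf{c}})$ whose stabilization is $C$.

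The main obstacle is the global coherence of the reverse construction: one must simultaneously enforce all combinatorial compatibility rules of Section \ref{MinStab} while assigning signs, contact orders, and chain lengths. The most delicate cases are $\sim$-equivalence classes on which the twist $I$ vanishes at every outgoing edge, where the sign of the component is forced only by the ambient partial-order structure, and markings with $\mathbf{c}_i = 1$ (so $\mathbf{m}_i = 0$), where the contact order is determined but the sign is still governed by the class it belongs to. I expect these to be handled by a combinatorial induction essentially reversing the explicit translation of the forward direction, combined with the finiteness of marked graphs of fixed topological type (Remark \ref{finitemarkedgraph}).
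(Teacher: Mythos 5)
Your overall plan — translating between the marked graph $G_\xi$ and a twist $I$ on the dual graph of $C$ via the line-bundle identity of Proposition \ref{detail}/Corollary \ref{detailCstar}, and checking the twist axioms — is the same as the paper's. However, the forward direction has a genuine gap at exactly the point where the real work lies. You claim that the vanishing, sign, and transitivity conditions for $I$ ``translate into the observation that the signs $\mathbf{s}_v \in \{+,-\}$ on stable components induce a partial order on $\sim$-equivalence classes \dots refining the one produced by $I$.'' This is not true: the sign structure only sorts stable components into two classes, whereas the partial order produced by $I$ can be strictly finer. Concretely, two stable components of the \emph{same} sign can meet at a node with non-zero contact order (the compatibility rules permit this, since both components are degenerate), so $I$ is non-zero on that edge and the two components lie in \emph{different} $\sim$-classes with a genuine comparison between them. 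The signs see nothing of this, and therefore cannot by themselves rule out a directed cycle among same-sign components, which is precisely what transitivity (and vanishing, and well-definedness of the sign condition) requires. The paper obtains all three axioms from the absence of strict cycles in the marked graph $G_\xi$, which is a non-trivial combinatorial consequence of minimality of the log-structure, quoted as \cite[Proposition 3.4.3]{QChen} and \cite[Corollary 3.3.7]{QChen}. You need an input of this kind; without it the forward direction does not close.

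Two smaller points. First, your repeated references to ``chains'' of unstable $(\PP^1,0,\infty)$-components are misleading: two such components can never be adjacent. If $C_{v_1}$ and $C_{v_2}$ are both non-degenerate, the contact orders at a shared node would have to be non-negative on both sides yet sum to zero, forcing them to vanish, which contradicts the section being non-constant of the form $[x:y]\mapsto([x:y],[x^c:\lambda y^c])$ with $c>0$. So the stabilization contracts \emph{single} $\PP^1$'s, inserted only between stable components of opposite sign; a node between same-sign components remains a node of $C^\mathrm{un}$ with a possibly non-zero contact order and no inserted $\PP^1$. Second, in the reverse direction the choice of a sign dichotomy on half-edges must be shown to exist and to satisfy the admissibility constraints (no strictly oriented edge from $+$ to $-$, etc.); this again uses acyclicity of the $I$-orientation, and is the construction the paper carries out explicitly. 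Your sketch gestures at this but the ``combinatorial induction'' you defer to is exactly where the paper's explicit dichotomy and the insertion rule do the work; I would spell that out.
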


\begin{proof}
Take a rubber $k$-log-canonical divisor $\xi$ as in the statement and consider its marked graph $G_\xi$, i.e.~the dual graph of the underlying curve $C^\mathrm{un}$, together with a sign for each vertex and each half-edge, and a contact order for each half-edge.

By the $\CC^*$-admissibility condition, the sign of a vertex is determined by the other data of $G_\xi$. Indeed, it is $0$ if and only if the corresponding irreducible component is not stable, otherwise it is given by the sign of any incident half-edge.

We represent the marked graph $G_\xi$ as follows:
\begin{center}
\begin{tikzpicture}[scale=1]
\draw (0,0) node(A1){$\bullet$};
\draw (1,0) node(A2){$\bullet$};
\draw (0.5,-0.5) node(A3){$\bullet$};
\draw (1,1) node(A4){$\bullet$};
\draw (2,1) node(A5){$\bullet$};
\draw (3,1) node(A6){$\bullet$};

\draw (0,0)--(0.5,0);
\draw (0.5,0)--(1,0);

\draw (0,0)--(0,0.25);
\draw[<-,>=latex] (0,0.25)--(0,0.5);

\draw (0,0.5)--(0,1);
\draw (0,1)--(0,1.5);

\draw[->,>=latex] (0,0)--(0.25,-0.25);
\draw (0.25,-0.25)--(0.5,-0.5);

\draw[->,>=latex] (1,0)--(0.75,-0.25);
\draw (0.75,-0.25)--(0.5,-0.5);

\draw (1,0)--(1,0.25);
\draw[<-,>=latex] (1,0.25)--(1,0.5);

\draw[->,>=latex] (1,0.5)--(1,0.75);
\draw (1,0.75)--(1,1);

\draw (1,0)--(1.25,0.25);
\draw[<-,>=latex] (1.25,0.25)--(1.5,0.5);

\draw[->,>=latex] (1.5,0.5)--(1.75,0.75);
\draw (1.75,0.75)--(2,1);

\draw[->,>=latex] (2,1)-- (2.5,1);
\draw (2.5,1)-- (3,1);

\draw (1,1)-- (1,1.25);
\draw (1,1.25)-- (1,1.5);

\draw (2,1)-- (2,1.25);
\draw (2,1.25)-- (2,1.5);

\draw (2,1)-- (1.75,1.25);
\draw (1.75,1.25)-- (1.5,1.5);

\draw[dashed] (-0.5,0.5)--(3.5,0.5);

\draw (0,0.5) node{$\bullet$};
\draw (1,0.5) node{$\bullet$};
\draw (1.5,0.5) node{$\bullet$};

\draw (4,0) node[right]{sign $+$};
\draw (4,0.5) node[right]{sign $0$};
\draw (4,1) node[right]{sign $-$};
\end{tikzpicture}
\end{center}
where the arrows correspond to the orientation defined in \cite[Definition 3.3.2]{QChen}:
if an edge $(h,h')$ is such that $c_h \geq 0$, then we write $h \geq h'$. Note that edges with trivial contact orders have two directions (and are non-oriented in the example). It defines a partial order on the vertices as well.
Observe that an unstable vertex is always strictly bigger than its neighbours.

A cycle in the marked graph $G_\xi$ is a sequence of consecutive edges $(e_1,\dotsc,e_l,e_1)$ in the graph satisfying $e_1 \geq \cdot \geq e_l \geq e_1$.
A strict cycle is a cycle of $G_\xi$ containing at least one strict inequality.
By \cite[Proposition 3.4.3]{QChen} and \cite[Corollary 3.3.7]{QChen}, there is no strict cycle in the marked graph $G_\xi$.

As before, we write $\mathbf{c}_h = \mathbf{s}_h \cdot c_h$ for each half-edge.
Obviously, if two half-edges $h,h'$ form an edge, then we have $\mathbf{c}_{h} = - \mathbf{c}_{h'}$.
The orientation of $G_\xi$ given by the integers $\mathbf{c}_h$ instead of $c_h$ corresponds to reversing the arrows in the negative part. In our example, it gives:
\begin{center}
\begin{tikzpicture}[scale=1]
\draw (0,0) node(A1){$\bullet$};
\draw (1,0) node(A2){$\bullet$};
\draw (0.5,-0.5) node(A3){$\bullet$};
\draw (1,1) node(A4){$\bullet$};
\draw (2,1) node(A5){$\bullet$};
\draw (3,1) node(A6){$\bullet$};

\draw (0,0)--(0.5,0);
\draw (0.5,0)--(1,0);

\draw (0,0)--(0,0.25);
\draw[<-,>=latex] (0,0.25)--(0,0.5);

\draw (0,0.5)--(0,1);
\draw (0,1)--(0,1.5);

\draw[->,>=latex] (0,0)--(0.25,-0.25);
\draw (0.25,-0.25)--(0.5,-0.5);

\draw[->,>=latex] (1,0)--(0.75,-0.25);
\draw (0.75,-0.25)--(0.5,-0.5);

\draw (1,0)--(1,0.25);
\draw[<-,>=latex] (1,0.25)--(1,0.5);

\draw (1,0.5)--(1,0.75);
\draw[<-,>=latex] (1,0.75)--(1,1);

\draw (1,0)--(1.25,0.25);
\draw[<-,>=latex] (1.25,0.25)--(1.5,0.5);

\draw (1.5,0.5)--(1.75,0.75);
\draw[<-,>=latex] (1.75,0.75)--(2,1);

\draw (2,1)-- (2.5,1);
\draw[<-,>=latex] (2.5,1)-- (3,1);

\draw (1,1)-- (1,1.25);
\draw (1,1.25)-- (1,1.5);

\draw (2,1)-- (2,1.25);
\draw (2,1.25)-- (2,1.5);

\draw (2,1)-- (1.75,1.25);
\draw (1.75,1.25)-- (1.5,1.5);

\draw[dashed] (-0.5,0.5)--(3.5,0.5);

\draw (0,0.5) node{$\bullet$};
\draw (1,0.5) node{$\bullet$};
\draw (1.5,0.5) node{$\bullet$};

\draw (4,0) node[right]{sign $+$};
\draw (4,0.5) node[right]{sign $0$};
\draw (4,1) node[right]{sign $-$};
\end{tikzpicture}
\end{center}
Moreover, an unstable vertex corresponds to $(\PP^1,0,\infty)$, so that it has exactly two incident half-edges $h$ and $h'$, and by the compatibility condition \eqref{compat}, we have $c_{h}=c_{h'}$. Hence, we have $\mathbf{c}_h=-\mathbf{c}_{h'}$.
As a consequence, we can stabilize the dual graph $G_\xi$ to $G_C$ and we obtain an induced orientation and an induced function
$$\mathbf{c} \colon H(G_C) \to \ZZ$$
satisfying the balancing condition.
On our example, we get:
\begin{center}
\begin{tikzpicture}[scale=1]
\draw (0,0) node(A1){$\bullet$};
\draw (1,0) node(A2){$\bullet$};
\draw (0.5,-0.5) node(A3){$\bullet$};
\draw (1,1) node(A4){$\bullet$};
\draw (2,1) node(A5){$\bullet$};
\draw (3,1) node(A6){$\bullet$};

\draw (0,0)--(0.5,0);
\draw (0.5,0)--(1,0);

\draw (0,0)--(0,0.25);
\draw (0,0.25)--(0,0.5);

\draw (0,0.5)--(0,1);
\draw (0,1)--(0,1.5);

\draw[->,>=latex] (0,0)--(0.25,-0.25);
\draw (0.25,-0.25)--(0.5,-0.5);

\draw[->,>=latex] (1,0)--(0.75,-0.25);
\draw (0.75,-0.25)--(0.5,-0.5);

\draw (1,0)--(1,0.25);
\draw (1,0.25)--(1,0.5);

\draw[<-,>=latex] (1,0.5)--(1,0.75);
\draw (1,0.75)--(1,1);

\draw (1,0)--(1.25,0.25);
\draw (1.25,0.25)--(1.5,0.5);

\draw[<-,>=latex] (1.5,0.5)--(1.75,0.75);
\draw (1.75,0.75)--(2,1);

\draw (2,1)-- (2.5,1);
\draw[<-,>=latex] (2.5,1)-- (3,1);

\draw (1,1)-- (1,1.25);
\draw (1,1.25)-- (1,1.5);

\draw (2,1)-- (2,1.25);
\draw (2,1.25)-- (2,1.5);

\draw (2,1)-- (1.75,1.25);
\draw (1.75,1.25)-- (1.5,1.5);

\draw[dashed] (-0.5,0.5)--(3.5,0.5);

%\draw (0,0.5) node{$\bullet$};
%\draw (1,0.5) node{$\bullet$};
%\draw (1.5,0.5) node{$\bullet$};

\draw (4,0) node[right]{sign $+$};
\draw (4,0.5) node[right]{sign $0$};
\draw (4,1) node[right]{sign $-$};
\end{tikzpicture}
\end{center}
Observe that we have only changed the orientation in the negative part and that an edge from the negative part to the positive part is always oriented towards the bottom.
Therefore, since the original marked graph $G_\xi$ has no strict cycles, we deduce that the directed graph $G_C$ has no strict cycles.
As a consequence, the function $\mathbf{c} \colon H(G_C) \to \ZZ$ has to satisfy the vanishing, the sign, and the transitivity conditions.
Then it is a twist on the stable curve $C$.
Obviously, the function $-\mathbf{c}$ is also a twist on the stable curve $C$.

Furthermore, by Proposition \ref{detail}, we have a non-vanishing global section of
$$s \in H^0(\widetilde{C}^\mathrm{un},\omega^k_{\widetilde{C}^\mathrm{un}}(D_\infty - D_0))$$
representing the $k$-log-canonical divisor $\xi$, where the curve $\widetilde{C}^\mathrm{un}$ is the partial normalization of the prestable curve $C^\mathrm{un}$ at the nodes with non-trivial contact orders, and where $\omega$ denotes the log-canonical line bundle.
The section $s$ yields an isomorphism
\begin{equation}\label{is}
\omega^k_{\widetilde{C}^\mathrm{un}} \simeq \cO_{\widetilde{C}^\mathrm{un}}(\sum_x \mathbf{c}_x \cdot x)
\end{equation}
where the sum is taken over the markings of the curve $\widetilde {C}^\mathrm{un}$.
To conclude, observe the relation
$$\widetilde{C}^\mathrm{un} = C_I \sqcup \bigsqcup (\PP^1,0,\infty),$$
where the disjoint union is taken over the set of unstable irreducible components of the curve $C^\mathrm{un}$, and that the equation \eqref{is} is trivial on $(\PP^1,0,\infty)$.
Therefore, the divisor $\sum_{i=1}^n \mathbf{m}_i \sigma_i$
is a $k$-twisted canonical divisor.

Conversely, take a stable marked curve $(C,\sigma_1,\dotsc,\sigma_n)$ such that the divisor $\sum_{i=1}^n \mathbf{m}_i \sigma_i$ is a $k$-twisted canonical divisor.
Therefore, there exists a twist $I$ on the dual graph $G_C$. We define the function $\mathbf{c} \colon H(G_C) \to \ZZ$ by taking $-I$ and then replacing the values at the markings by $\mathbf{c}_i:=\mathbf{m}_i+1$.
The dual graph $G_C$ has an orientation given by the function $\mathbf{c}$.

We consider a partition of the set $H(G_C)$ of half-edges of the graph $G_C$ into two parts
$$H(G_C) = H^+(G_C) \sqcup H^-(G_C),$$
such that
\begin{itemize}
\item if $h$ is a leg with $\mathbf{c}_h>0$, then $h \in H^+(G_C)$,
\item if $h$ is a leg with $\mathbf{c}_h<0$, then $h \in H^-(G_C)$,
\item if two half-edges $h,h'$ form a (strict) oriented edge $h \to h'$, then we do not have $(h,h') \in H^+(G_C) \times H^-(G_C)$,
\item all the half-edges that are not legs and that are incident to the same vertex are in the same subset.
\end{itemize}
If $C$ is not a smooth curve, then this dichotomy among half-edges induces a dichotomy among vertices.
If $C$ is a smooth curve, then we choose a sign for the vertex.

In the light of our example above, we have made the choice of a dashed line cutting the dual graph $G_C$ in two parts, that we call positive and negative.
Then, we insert a vertex with genus $0$ at each intersection point of the dashed line with $G_C$, or equivalently, we insert the dual graph of $(\PP^1,0,\infty)$ between at the middle of every edge with half-edges of different signs, and also between a vertex and a leg if the leg is attached to the vertex but has different sign.
At last, we define $c_h := \mathbf{s}_h \cdot \mathbf{c}_h$, where $\mathbf{s}_h$ stands for the sign of the half-edge $h$.
Therefore, we obtain a marked graph, admissible in the sense of \cite[Definition 3.3.6]{QChen}.
%%%%%%%%%%%  VRAI? LE VERIFIER!!
Moreover, since we started with a $k$-twisted canonical divisor, the equation \eqref{is} is satisfied.
Thus, by Proposition \ref{detail}, we have a unique rubber $k$-log-canonical divisor $\xi$ with the marked graph we have constructed.
\end{proof}

In the proof of Proposition \ref{compare}, we have seen that there are several rubber $k$-log-canonical divisors associated to the same $k$-twisted canonical divisor, due to the choice of a twist function on the dual graph and to the choice of a dichotomy between half-edges.
Moreover, we have seen in Definition \ref{topoltwist} that the moduli space of rubber $k$-log-canonical divisors of type $(g,n,\vec{\mathbf{c}})$ is not connected, and that we have an extra topological data given by a decorated dual graph.
We fix this issue by considering the following definition.

\begin{dfn}
A rubber $k$-log-canonical divisor $\xi$ is called principal if the schematic map is the zero section on every stable component of the source curve.
We denote by $\cD^{*(k)}_{\mathrm{pr},(g,n,\vec{\mathbf{c}})} \subset \cD_{(g,n,\vec{\mathbf{c}})}^{*(k)}$ the open and closed substack corresponding to principal rubber $k$-log-canonical divisors.
For principal rubber $k$-log-canonical divisors with at least one negative integer $\mathbf{c}_i$, we say that we are in the strictly meromorphic case.
For $k=1$, it coincides with the definition in \cite{PandhFark}.
\end{dfn}

\begin{rem}
Let $\xi$ be a rubber $k$-log-canonical divisor with source curve $C$.
It is principal if and only if every unstable component $(\PP^1,0,\infty) \subset C$ carries a marking with $\mathbf{c}_i<0$.
Following Definition \ref{topoltwist}, the substack $\cD^{*(k)}_{\mathrm{pr},(g,n,\vec{\mathbf{c}})} \subset \cD_{(g,n,\vec{\mathbf{c}})}^{*(k)}$ consists of points of $\cD^{*(k)}$ where the topological type is $(g,n,\vec{\mathbf{c}})$ enhanced with the data of the trivial dual graph (with no edges) where the negative markings are twisted by their contact orders.
We have a similar component of the moduli space $\cD^{*(k)}$ where the schematic map is the infinity section on every stable component.
\end{rem}

\begin{cor}
Let $\vec{\mathbf{c}}=(\mathbf{c}_1, \dotsc, \mathbf{c}_n)$ be a partition of $k (2g-2+n)$, and define $\mathbf{m}_i:=\mathbf{c}_i-1$.
Then, the natural map
$$\mathrm{st} \colon \cD^{*(k)}_{\mathrm{pr},(g,n,\vec{\mathbf{c}})} \to \overline{\cM}_{g,n},$$
is a finite morphism of Deligne--Mumford stacks, whose image is the closed substack $\widetilde{\mathcal{H}}^k_g(\vec{\mathbf{m}}) \subset \overline{\cM}_{g,n}$.
Therefore, the dimension of the moduli space $\cD^{*(k)}_{\mathrm{pr},(g,n,\vec{\mathbf{c}})}$ is greater than its virtual dimension $2g-3+n$.

In the strictly meromorphic case of principal rubber $1$-log-canonical divisors, the moduli space $\cD^{*(1)}_{\mathrm{pr},(g,n,\vec{\mathbf{c}})}$ is of the expected virtual dimension $2g-3+n$, and the virtual class equals the fundamental class.
%%%%%%%%%%%%%%%% Référence?
\end{cor}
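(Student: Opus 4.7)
The plan has three main steps: establishing finiteness of $\mathrm{st}$ and identifying its image; bounding the dimension of $\cD^{*(k)}_{\mathrm{pr},(g,n,\vec{\mathbf{c}})}$; and, in the strictly meromorphic case with $k=1$, upgrading this to pure dimension equal to the virtual dimension and identifying the virtual and fundamental classes.

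For the first step, properness of $\mathrm{st}$ follows from the properness of $\cD^{*(k)}_{\mathrm{pr},(g,n,\vec{\mathbf{c}})}$ given by Corollary \ref{DMstar} together with the separatedness of $\overline{\cM}_{g,n}$. For quasi-finiteness, the fiber of $\mathrm{st}$ over a marked stable curve $[C,\sigma_1,\ldots,\sigma_n]$ consists of isomorphism classes of principal rubber $k$-log-canonical divisors whose stabilized source is $C$; by Corollary \ref{detailCstar} each such object is determined by its marked graph, and by Remark \ref{finitemarkedgraph} only finitely many marked graphs of topological type $(g,n,\vec{\mathbf{c}})$ arise in this way. For the image, the forward direction of Proposition \ref{compare} gives $\mathrm{st}(\cD^{*(k)}_{\mathrm{pr},(g,n,\vec{\mathbf{c}})}) \subseteq \widetilde{\cH}^k_g(\vec{\mathbf{m}})$. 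For the converse, I would apply the construction from the proof of Proposition \ref{compare} to any twist $I$ on a stable curve in $\widetilde{\cH}^k_g(\vec{\mathbf{m}})$, choosing the dichotomy of half-edges that places every stable vertex in $H^+$; this choice is always available because it only forces unstable $(\PP^1,0,\infty)$ components to be inserted at the legs with $\mathbf{c}_i<0$, and the resulting rubber divisor is by construction principal and maps to the given curve.

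Since $\mathrm{st}$ is finite and surjective onto $\widetilde{\cH}^k_g(\vec{\mathbf{m}})$, and every irreducible component of the latter has dimension at least $2g-3+n$ by \cite[Theorem 21]{PandhFark}, we deduce $\dim \cD^{*(k)}_{\mathrm{pr},(g,n,\vec{\mathbf{c}})} \geq 2g-3+n$, matching the virtual dimension computed in Corollary \ref{DMstar}. In the strictly meromorphic case with $k=1$, \cite[Theorem 3]{PandhFark} refines this to pure dimension $2g-3+n$; hence $\cD^{*(1)}_{\mathrm{pr},(g,n,\vec{\mathbf{c}})}$ is pure of the virtual dimension.

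The hardest part is the identification $[\cD^{*(1)}_{\mathrm{pr},(g,n,\vec{\mathbf{c}})}]^{\mathrm{vir}} = [\cD^{*(1)}_{\mathrm{pr},(g,n,\vec{\mathbf{c}})}]$. Since the virtual and actual dimensions coincide on each irreducible component, the virtual class is a positive integer combination of the fundamental classes of the components, and it suffices to show that each multiplicity equals one. I would proceed by analyzing the perfect obstruction theory of Corollary \ref{DMstar} at the generic point of each irreducible component and showing that the associated obstruction sheaf vanishes there, so that the obstruction theory is generically a quasi-isomorphism with the cotangent complex, forcing multiplicity one. On components whose generic point parametrizes a divisor on a smooth curve, this vanishing follows from the identification with the smooth Abel--Jacobi locus $\cH^1_g(\vec{\mathbf{m}}) \subset \cM_{g,n}$ and the submersivity of the Abel--Jacobi map in the strictly meromorphic case. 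The main difficulty is the analogous analysis on components whose generic point parametrizes a divisor on a nodal curve, where one has to use the explicit description of the divisor as a non-vanishing section on the partial normalization from Proposition \ref{detail} together with the compatibility relation \eqref{compat} to rule out obstructions coming from deformations of the section and of the nodes.
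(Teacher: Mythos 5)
Your treatment of the finiteness, image, and dimension bound matches the paper's approach: the paper factors $\mathrm{st}$ as $\cD_\Gamma^{*(k)} \to D_\Gamma^{*(k)} \hookrightarrow \overline{\cM}_{g,n}(\PP^1)\to\overline{\cM}_{g,n}$ and shows each arrow is finite, whereas you argue properness plus quasi-finiteness; both routes work, and both steps cite Proposition \ref{compare} for the image and \cite[Theorems 21 and 3]{PandhFark} for the dimension.

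The genuine problem is in your final step, proving $[\cD^{*(1)}_{\mathrm{pr}}]^{\mathrm{vir}}=[\cD^{*(1)}_{\mathrm{pr}}]$. You propose to show the obstruction sheaf of the relative perfect obstruction theory of Corollary \ref{DMstar} vanishes at the generic point of each irreducible component, ``forcing multiplicity one.'' But this conflicts directly with Proposition \ref{multipli}: the stack $\cD^{*(1)}_{\mathrm{pr}}$ is genuinely \emph{non-reduced}, with geometric multiplicity $\prod_{e\in E^*(G_0)}|c_e|>1$ along every stratum $Z_0$ whose generic source curve has a node with non-zero contact order (and such strata are irreducible components in the strictly meromorphic case). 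Generic obstruction-vanishing would make $\cD^{*(1)}_{\mathrm{pr}}$ generically smooth over a reduced base, hence reduced at those generic points, which is false. Moreover, the intended equality is $[\cD^{*}]^{\mathrm{vir}}=\sum_i m_i[X_i^{\mathrm{red}}]$ with $m_i$ precisely these multiplicities --- not $\sum_i[X_i^{\mathrm{red}}]$ --- so ``each multiplicity equals one'' is not the thing to show. What should be shown is that the coefficients produced by the virtual pull-back of Corollary \ref{DMstar} agree with the geometric lengths coming from the root construction; this is what the paper implicitly invokes (a pure-dimensional stack whose dimension equals the virtual dimension carries a virtual class equal to its scheme-theoretic fundamental cycle), and it is the same local Koszul/lci argument at generic points, not an unobstructedness statement. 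The paper itself leaves this step implicit, but your replacement argument, as written, would prove a false statement on the nodal strata.
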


%\begin{cor}
%Let $\vec{\mathbf{c}}=(\mathbf{c}_1, \dotsc, \mathbf{c}_n)$ be a partition of $k (2g-2+n)$, and $\Gamma$ be the topological type $(g,n,\vec{\mathbf{c}})$ enhanced with a decorated dual graph as in Definition \ref{topoltwist}.
%We define $\mathbf{m}_i:=\mathbf{c}_i-1$.
%Then, the natural map
%$$\mathrm{st} \colon \cD^{*(k)}_\Gamma \to \overline{\cM}_{g,n},$$
%is a finite morphism of Deligne--Mumford stacks, whose image is a closed substack of $\widetilde{\mathcal{H}}^k_g(\vec{\mathbf{m}})$.
%Furthermore, it is surjective onto $\widetilde{\mathcal{H}}^k_g(\vec{\mathbf{m}})$ in the case of principal rubber $k$-log-canonical divisors.
%
%Therefore, the dimension of the moduli space $\cD^{* (k)}_\Gamma$ is greater than its virtual dimension $2g-2+n - \# V(\Gamma)$, where $\#V(\Gamma)$ denotes the number of vertices in the stabilized graph of the topological type $\Gamma$.
%
%In the strictly meromorphic case of principal rubber $1$-log-canonical divisors, the moduli space $\cD^{*(1)}_{\mathrm{pr},(g,n,\vec{\mathbf{c}})}$ is of the expected virtual dimension $2g-3+n$, and the virtual class equals the fundamental class.
%%%%%%%%%%%%%%%%% Vrai? Référence?
%\end{cor}

\begin{proof}
We do the proof in the more general case of the space $\cD^{* (k)}_\Gamma$, where $\Gamma$ is the topological type $(g,n,\vec{\mathbf{c}})$ enhanced with a decorated dual graph as in Definition \ref{topoltwist}.
By Proposition \ref{compare}, the geometric points of $\cD^{*(k)}_\Gamma$ are sent to geometric points of $\widetilde{\mathcal{H}}^k_g(\vec{\mathbf{m}})$ by the morphism $\mathrm{st}$.
This morphism is the composition
$$\cD^{*(k)}_\Gamma \to D^{*(k)}_\Gamma \hookrightarrow \overline{\cM}_{g,n}(\PP^1) \to \overline{\cM}_{g,n}$$
and the first morphism is finite by Corollary \ref{DMstar}.
The image of $D^{*(k)}_\Gamma$ into $\overline{\cM}_{g,n}(\PP^1)$ consists of stable maps to $\PP^1$ such that each stable component maps to $0$ or $\infty$, so that the morphism forgetting the stable map and stabilizing the source curve is also finite.
The map $\mathrm{st}$ is then finite, and the dimension of $\cD^{*(k)}_\Gamma$ equals the dimension of its image, which is a closed substack of $\overline{\cM}_{g,n}$, and thus a closed substack of $\widetilde{\mathcal{H}}^k_g(\vec{\mathbf{m}})$ since it is itself closed.
By the decomposition \eqref{decom}, we may assume the stabilized graph of $\Gamma$ has no edges, i.e.~we are in the principal case.
Then the map $\mathrm{st}$ is surjective onto 
$\widetilde{\mathcal{H}}^k_g(\vec{\mathbf{m}})$, and by \cite[Theorem 21]{PandhFark}, all the irreducible components of $\widetilde{\mathcal{H}}^k_g(\vec{\mathbf{m}})$ are of dimensions at least $2g-3+n$, which is the virtual dimension in the principal case.
For the special case $k=1$, in the strictly meromorphic case, the dimension of $\widetilde{\mathcal{H}}^1_g(\vec{\mathbf{m}})$ is exactly $2g-3+n$, by \cite[Theorem 3]{PandhFark}.
\end{proof}

\subsection{Multiplicities of the moduli space}
The moduli space $\cD^{*(k)}$ of rubber $k$-log-canonical divisors is in general not reduced, due to the presence of non-zero contact orders at the nodes.
Furthermore, in the strictly meromorphic case for principal rubber $1$-log-canonical divisors, the virtual class equals the fundamental class
$$\left[ \cD^{*(1)}_{\mathrm{pr},(g,n,\vec{\mathbf{c}})} \right]^\mathrm{vir} = \left[ \cD^{*(1)}_{\mathrm{pr},(g,n,\vec{\mathbf{c}})} \right].$$
In the following proposition, we compute the multiplicities of the irreducible components of the moduli space $\cD^{*(k)}$, and then relate them to a conjecture in \cite[Appendix]{PandhFark}.

\begin{pro}\label{multipli}
Let $\xi_0$ be a rubber $k$-log-canonical divisor with marked graph $G_0$, and denote by $G_0^\mathrm{stab}$ the stabilized dual graph with induced contact orders. The sets $E^*(G_0)$ and $E^*(G_0^\mathrm{stab})$ stand for the sets of edges with non-zero contact orders.
Let $Z_0$ be the strata of $\cD^{*(k)}$ where the generic point has marked graph $G_0$.
Then the multiplicity of the strata $Z_0$ is given by
$$\prod_{e \in E^*(G_0)} |c_e|,$$
where $\pm c_e$ is the non-zero contact order at the edge $e$.
Furthermore, the push-forward of the fundamental class of the component $Z_0$ to the moduli space of stable curves is
$$\mathrm{st}_* \left[Z_0\right] = 
\prod_{e \in E^*(G_0^\mathrm{stab})} |c_e| \cdot \left[\mathrm{st}(Z_0) \right] \in A^*(\overline{\cM}_{g,n}).$$
\end{pro}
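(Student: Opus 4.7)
The plan is to reduce both statements to an \'etale-local analysis around a generic point $\xi_0$ of $Z_0$, where both the multiplicity and the stacky degree of $\mathrm{st}$ factor as products of contributions coming from the individual twisted nodes.

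First, I would use the explicit form of the minimal monoid $\overline{\cM}(G_0)$ derived in Section \ref{MinStab}. Near $\xi_0$, the stack $\cD^{*(k)}$ sits inside the larger stack $\cD^{(k)}$ as the closed substack cut out by the $\CC^*$-admissibility condition (Proposition \ref{closedcond}), which amounts, in terms of the minimal log-structure, to requiring each degeneracy $e_v$ of a stable component to be a non-zero element of $\cM_S$. Using the relations of Section \ref{MinStab}, in particular $e_{v} = c_l\, e_l$ for each edge $l \in E^*(G_0)$ adjacent to a degenerate vertex $v$ on the side of positive contact order, this condition translates \'etale-locally into a scheme-theoretic equation $t_l^{|c_l|} = 0$, where $t_l := \exp(e_l)$ is the log-smoothing parameter of the corresponding node. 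Since the minimal monoid splits as a direct sum over $E^*(G_0)$ modulo the identifications $e_v = c_l e_l$, these local equations are transverse, and the multiplicity of $Z_0$ as a cycle inside $\cD^{*(k)}$ is the product $\prod_{l \in E^*(G_0)} |c_l|$, as claimed.

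Second, to handle the push-forward, I would analyze the stabilization morphism $\mathrm{st}$ by examining the automorphisms of each unstable $(\PP^1,0,\infty)$ component of the source curve. By Remark \ref{P1}, such a component with contact order $c$ carries a group $\mu_c$ of rescaling automorphisms fixing the rubber $k$-log-canonical divisor, and these automorphisms become trivial after $\mathrm{st}$ contracts the unstable component. Hence $\mathrm{st}|_{Z_0^{\mathrm{red}}}$ is stacky-wise a $\prod \mu_c$-gerbe over its image, contributing a degree factor of $1/\prod c$. Each unstable component with contact order $c$ is adjacent to two edges of $E^*(G_0)$ of contact order $c$ each (by the compatibility \eqref{multi}) which collapse into a single edge of $E^*(G_0^{\mathrm{stab}})$ of contact order $c$ upon stabilization. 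The extra factor $c$ in $\prod_{E^*(G_0)}|c_e|\,/\,\prod_{E^*(G_0^{\mathrm{stab}})}|c_e|$ is then exactly cancelled by the stacky degree $1/c$ of $\mathrm{st}$ at each unstable component, yielding the asserted push-forward formula.

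The main obstacle is the precise local scheme-theoretic computation in the first step: one must carefully identify the local model of $\cD^{*(k)}$ inside $\cD^{(k)}$ and verify that the $\CC^*$-admissibility condition takes the form $\exp(e_v) = 0$, which via $e_v = c_l e_l$ becomes the non-reduced equation $t_l^{|c_l|} = 0$ responsible for the multiplicity. Once this local model is established, both the factorization over edges and the push-forward analysis follow from the decomposition \eqref{decom} and the automorphism counting of the unstable components.
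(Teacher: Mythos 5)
You take a genuinely different route from the paper, and the route has a gap in the monoid computation.

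\textbf{Comparison with the paper's argument.}
The paper does not attempt to cut out $\cD^{*(k)}$ inside $\cD^{(k)}$ by local equations. Instead, it forgets the log-section $(f,f^\flat)$ altogether, obtaining a morphism from $\cD^{*(k)}$ to the stack of prestable log-curves over $\mathfrak{M}_{g,n}$; it then asserts this morphism is representable by reduced schemes, and that the morphism of log-structures $\cM_S^{C/S} \to \cM_S$ is a root construction over $\mathfrak{M}_{g,n}$, which by Cadman and Matsuki--Olsson gives the product of the contact orders. Your argument replaces this by an \'etale-local equation $\exp_S(e_v)=0$ inside $\cD^{(k)}$, which is a reasonable idea but requires (i) identifying the scheme structure on $\cD^{*(k)}$ with that cut out by $\exp_S(e_v)=0$, which the paper never states, and (ii) computing the order of vanishing of this equation in the minimal log-structure. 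Point (ii) is where the argument breaks.

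\textbf{The gap.}
Your claim that ``the minimal monoid splits as a direct sum over $E^*(G_0)$ modulo the identifications $e_v=c_le_l$'' is false once a single degenerate vertex $v$ is adjacent to more than one edge of $E^*(G_0)$, which is exactly the generic principal rubber situation ($v$ carries several twisted negative markings). If $v$ is adjacent to twisted nodes $l_1,\dots,l_m$ with contact orders $c_1,\dots,c_m$, the relations $R_{l_i}\colon e'_v = c_i e'_{l_i}$ of Section \ref{MinStab} all identify $c_i e'_{l_i}$ with the same element $e'_v$, so $c_1 e'_{l_1}=\cdots=c_m e'_{l_m}$ in $\overline{\cM}(G_\xi)$ and the $e'_{l_i}$ are \emph{not} independent generators. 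Concretely, for $m=2$ the saturated minimal monoid is $\NN\,\epsilon$ with $e'_{l_1}=(c_2/d)\,\epsilon$, $e'_{l_2}=(c_1/d)\,\epsilon$, and $e'_v=(c_1c_2/d)\,\epsilon$, where $d=\gcd(c_1,c_2)$. The equation $\exp_S(e'_v)=0$ reads $s^{c_1c_2/d}=0$ in the local coordinate $s:=\exp_S(\epsilon)$, which has order $\mathrm{lcm}(c_1,c_2)$, not $c_1c_2$. So the local-equation approach, pursued to the end, does not reproduce $\prod_{e\in E^*(G_0)}|c_e|$; one must go through the paper's route, in which the relation to $\mathfrak{M}_{g,n}$ via the root stack treats the nodes independently and the pullbacks of the coordinate divisors contribute the full product.

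\textbf{On the push-forward part.}
Your identification of the stacky degree of $\mathrm{st}$ via the $\mu_{c_v}$ automorphisms of the unstable $(\PP^1,0,\infty)$ components matches the paper's argument, but your verification only treats the case where the unstable component has two adjacent nodes. The paper explicitly separates the second case, in which one special point of the unstable component is a marking (a leg): that component contributes an edge to $E^*(G_0)$ but no edge to $E^*(G_0^{\mathrm{stab}})$, and the lone factor $c_v$ in the numerator is cancelled by the stacky $1/c_v$. Both cases are needed to obtain the stated formula, so this case should be spelled out.
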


\begin{proof}
Forgetting the log-map $(f,f^\flat)$, we have a morphism sending a rubber $k$-log-canonical divisor 
$$\xi = (C \rightarrow S, (\sigma_1,\dotsc,\sigma_n), \cM_S^{C/S} \rightarrow \cM_S ~ ; ~~  C \xrightarrow{f} P, f^*\cM^{0,\infty} \xrightarrow{f^\flat} \cM_C)$$
to the prestable log-curve
$$(C \rightarrow S, (\sigma_1,\dotsc,\sigma_n), \cM_S^{C/S} \rightarrow \cM_S).$$
This morphism is represented by a reduced scheme, and the morphism of log-structures $\cM_S^{C/S} \rightarrow \cM_S$ is given by multiplying the elements smoothing the nodes by the contact orders at the nodes. Thus, it is obtained by a root construction, as described in \cite[Appendix A.3.3]{QChendegen}, over the moduli space of prestable curves with standard log-structures, which is isomorphic to $\mathfrak{M}_{g,n}$ and thus reduced.
By \cite[Theorem 3.3.6]{Cad07} and \cite[Section 4]{MO05}, the root construction gives a non-reduced stack, and the multiplicity is the product of the contact orders at the nodes.

For the second part of the statement, observe that the morphism $\mathrm{st}$ is in general not representable, since we have an extra automorphism group $\mu_{c_v}$ for each unstable component $v \in V^\mathrm{un}(G_0)$, where $c_v$ denotes the contact order of the component $v$ and $V^\mathrm{un}(G_0)$ stands for the set of unstable vertices of the dual graph $G_0$.
Then we obtain the formula
$$\mathrm{st}_* \left[Z_0\right] = 
\cfrac{\prod_{e \in E^*(G_0)} |c_e| }{\prod_{v \in V^\mathrm{un}(G_0)} |c_v|} \cdot \left[\mathrm{st}(Z_0) \right] \in A^*(\overline{\cM}_{g,n}).$$
An unstable component $v \in V^\mathrm{un}(G_0)$ is of type $(\PP^1,0,\infty)$, and we have two cases:
\begin{itemize}
\item the two special points of $v$ are nodes with contact orders $|c_v|$,
\item exactly one special point of $v$ is a node with contact order $|c_v|$; the other point is a marking.
\end{itemize}
In the first case, the unstable component $v$ contributes the factor $|c_v|$ to the pushforward formula.
In the second case, it contributes trivially to the formula.
\end{proof}

\begin{cor}\label{weighted}
Take $k=1$ and fix a triplet $(g,n,\vec{\mathbf{c}})$; we write $\mathbf{m}_i:=\mathbf{c}_i-1$.
Then we have
$$\mathrm{st}_* \left[ \cD^{*(1)}_{\mathrm{pr},(g,n,\vec{\mathbf{c}})} \right] = H_{g,\vec{\mathbf{m}}} \in A^g(\overline{\cM}_{g,n}),$$ where the right-hand side is the weighted fundamental class from \cite[Appendix A.4]{PandhFark}.
\end{cor}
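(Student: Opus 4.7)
The plan is to decompose the fundamental class of $\cD^{*(1)}_{\mathrm{pr},(g,n,\vec{\mathbf{c}})}$ along its top-dimensional strata, push it forward via $\mathrm{st}$ using the explicit formula from Proposition~\ref{multipli}, and then match the resulting cycle term-by-term with the combinatorial definition of $H_{g,\vec{\mathbf{m}}}$ in \cite[Appendix A.4]{PandhFark}.

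First I would invoke the preceding corollary: in the strictly meromorphic case with $k=1$, the stack $\cD^{*(1)}_{\mathrm{pr},(g,n,\vec{\mathbf{c}})}$ is of pure dimension $2g-3+n$ and its virtual class coincides with its scheme-theoretic fundamental class. Writing $Z_{G_0}$ for the closure of the stratum whose generic marked graph is $G_0$ and $[Z_{G_0}]_{\mathrm{red}}$ for its reduced fundamental class, Proposition~\ref{multipli} gives
\begin{equation*}
\bigl[\cD^{*(1)}_{\mathrm{pr},(g,n,\vec{\mathbf{c}})}\bigr] \;=\; \sum_{G_0} \prod_{e \in E^*(G_0)} |c_e|\cdot [Z_{G_0}]_{\mathrm{red}},
\end{equation*}
the sum running over the marked graphs indexing the top-dimensional strata of the principal locus. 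Applying $\mathrm{st}_*$ and using the push-forward formula of Proposition~\ref{multipli} (which absorbs the $\mu_{c_v}$-automorphisms at the unstable vertices into the passage from $E^*(G_0)$ to $E^*(G_0^{\mathrm{stab}})$) yields
\begin{equation*}
\mathrm{st}_*\bigl[\cD^{*(1)}_{\mathrm{pr},(g,n,\vec{\mathbf{c}})}\bigr] \;=\; \sum_{G_0} \prod_{e \in E^*(G_0^{\mathrm{stab}})} |c_e|\cdot \bigl[\mathrm{st}(Z_{G_0})\bigr] \;\in\; A^g(\overline{\cM}_{g,n}).
\end{equation*}

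Next I would use Proposition~\ref{compare} to identify each $\mathrm{st}(Z_{G_0})$ with the closed substack of $\widetilde{\cH}^1_g(\vec{\mathbf{m}})$ cut out by the twist on $G_0^{\mathrm{stab}}$ induced by the contact orders. In the principal case every stable vertex carries sign $+$, which forces the dichotomy of half-edges in the proof of Proposition~\ref{compare} to be rigidly determined by the pair $(G_0^{\mathrm{stab}},\mathrm{twist})$. This produces a bijection between the top-dimensional strata $\{G_0\}$ and the decorated stable graphs indexing the irreducible components of $\widetilde{\cH}^1_g(\vec{\mathbf{m}})$, and under this bijection the weight $\prod_{e \in E^*(G_0^{\mathrm{stab}})}|c_e|$ is exactly the product of absolute twist values at the twisted nodes, which is the multiplicity attached by Farkas--Pandharipande to the corresponding component in their definition of $H_{g,\vec{\mathbf{m}}}$.

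The hard part is the combinatorial bookkeeping in the last paragraph: one must verify that the principal condition really does produce a bijection with neither overcounting nor missing components, and that the weight $\prod|c_e|$ coincides with that of \cite[Appendix~A.4]{PandhFark}. The purity of dimension of $\widetilde{\cH}^1_g(\vec{\mathbf{m}})$ in the strictly meromorphic case from \cite[Theorem~3]{PandhFark} rules out overcounting, and the surjectivity of $\mathrm{st}$ onto $\widetilde{\cH}^1_g(\vec{\mathbf{m}})$ already established ensures that every component is reached, so the remaining work is a direct comparison of definitions.
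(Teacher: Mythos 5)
Your proof is correct and is essentially the argument the paper intends: the corollary appears without a written proof, as an immediate consequence of the strata-multiplicity computation in Proposition~\ref{multipli}, the preceding (unnamed) corollary identifying virtual and fundamental classes in the strictly meromorphic case, and the twist correspondence of Proposition~\ref{compare}. Your chain of reasoning --- decompose the fundamental class over top-dimensional strata, apply the pushforward formula of Proposition~\ref{multipli} (with the $\mu_{c_v}$ automorphism factors cancelling against the unstable-vertex contact orders), and match the resulting coefficients $\prod_{e}|c_e|$ against the Farkas--Pandharipande multiplicities --- is exactly what the paper leaves implicit.
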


\begin{conj}[{\cite[Conjecture A]{PandhFark}}]
In the strictly meromorphic case, the moduli space of principal rubber $1$-log-canonical divisors satisfies
$$\mathrm{st}_* \left[ \cD^{*(1)}_{\mathrm{pr},(g,n,\vec{\mathbf{c}})} \right] = 2^{-g} P^g_{g,\vec{\mathbf{m}}}.$$
\end{conj}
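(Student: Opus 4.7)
The plan is to leverage Corollary~\ref{weighted}, which reduces the conjecture to the Pandharipande--Farkas identity $H_{g,\vec{\mathbf{m}}} = 2^{-g} P^g_{g,\vec{\mathbf{m}}}$, and to attack the latter through a virtual $\CC^*$-localization on the non-rubber stack $\cD^{(1)}_{(g,n,\vec{\mathbf{c}})}$ combined with Chiodo's formula for derived push-forwards of $r$-th roots of $\omega_\pi(-\sum \mathbf{m}_i \sigma_i)$. This parallels the JPPZ strategy for the classical double ramification cycle, but now carried out on the log-geometric moduli space built in this paper, where the relevant virtual class is already perfectly placed by Corollary~\ref{DMstar}.

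First I would introduce a $\CC^*$-action on $\cD^{(1)}_{(g,n,\vec{\mathbf{c}})}$ induced by fiberwise scaling on the projective bundle $P = \mathrm{Proj}(\cO_C \oplus \omega_\pi)$. Definition~\ref{Cstaradm} is designed so that a stable $1$-log-canonical divisor is $\CC^*$-admissible exactly when every stable component of the source maps to a $\CC^*$-fixed section of $P$, so that $\cD^{*(1)}_{\mathrm{pr},(g,n,\vec{\mathbf{c}})}$ embeds as a distinguished component of the $\CC^*$-fixed locus of $\cD^{(1)}$. The Graber--Pandharipande virtual localization formula then expresses the push-forward $\mathrm{st}_*\left[\cD^{*(1)}_{\mathrm{pr}}\right]^{\mathrm{vir}}$ as a sum of graph contributions, each involving $\psi$- and $\lambda$-classes pulled back to boundary strata of $\overline{\cM}_{g,n}$ together with explicit rational functions of the equivariant parameter, arising from the normal bundles of the unstable $(\PP^1,0,\infty)$-bridges that glue the principal $0$-section piece to $\infty$-section pieces.

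Second I would pass to the $r$-spin picture. After base-change along the gerbe of $r$-th roots of $\omega_\pi(-\sum \mathbf{m}_i \sigma_i)$, Chiodo's formula expresses $c_g(-R\pi_* \cL)$ as a tautological polynomial in $r$, whose graph expansion is precisely Pixton's formula $P^g_{g,\vec{\mathbf{m}}}$ up to the normalisation $2^{-g}$, which one expects to arise from the symmetric role played by the two sections $0$ and $\infty$ of $P$. The concluding step is to match, graph by graph, the $\CC^*$-localization contributions on the log side with Chiodo's summands on the $r$-spin side, using Proposition~\ref{multipli} to account for the root-construction multiplicities $|c_e|$ at twisted nodes and the cyclic factors $\mu_{c_v}$ at unstable rational bridges.

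The main obstacle is precisely this last matching step. The combinatorial weights appearing on the log side (root-construction multiplicities, orbifold corrections at twisted markings and nodes, contributions from contracted unstable bridges) must be shown to agree with the graph weights of Pixton's formula that arise from the JPPZ regularization process described in \cite[Appendix A.3]{PandhFark}. One must also establish the polynomiality in $r$ of the localization residues and the vanishing of all positive-degree-in-$r$ terms: the latter is the essentially conjectural input that prevents the present strategy from being unconditional, and I expect that the easier compact-type case, where Hain's formula already matches the Pixton specialisation, serves as a useful sanity check rather than as the heart of the argument.
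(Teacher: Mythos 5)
The statement you are trying to prove is stated in the paper as a \emph{conjecture} (a rephrasing of \cite[Conjecture A]{PandhFark} via Corollary \ref{weighted}), and the paper offers no proof of it; the author explicitly says the paper is part of a larger project aimed at proving it. So there is no ``paper's own proof'' to compare against. Your first step --- reducing the conjecture to the identity $H_{g,\vec{\mathbf{m}}} = 2^{-g} P^g_{g,\vec{\mathbf{m}}}$ via Corollary \ref{weighted} --- is exactly the rephrasing the paper performs, and that part is correct. Everything after that is a strategy sketch rather than a proof, and you candidly flag its conjectural core yourself (polynomiality in $r$ of the localization residues and vanishing of the positive-degree terms). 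As written, the proposal therefore does not establish the statement.

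Beyond the acknowledged gap, there is a structural problem with the localization step as you describe it. Virtual $\CC^*$-localization expresses an \emph{equivariant class on the ambient space} as a sum over \emph{all} components of the fixed locus; it does not directly express $\mathrm{st}_*\bigl[\cD^{*(1)}_{\mathrm{pr},(g,n,\vec{\mathbf{c}})}\bigr]$ as a graph sum. The fixed locus of the fiberwise scaling action on $\cD^{(1)}$ contains, besides the principal component, all the other $\CC^*$-admissible strata (alternating between the $0$- and $\infty$-sections, with arbitrary decorated graphs as in Definition \ref{topoltwist}), and isolating the principal contribution requires an auxiliary identity --- in the JPPZ argument this is supplied by working on a different space (stable maps to an orbifold $\PP^1[r]$) and exploiting the vanishing of certain equivariant residues, not by localizing on the rubber space itself. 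You would also need to verify that the scaling action is compatible with the minimal log structures and with the perfect obstruction theory of Theorem \ref{DMstack}, which is not automatic. Finally, note a small inconsistency: the virtual class of Corollary \ref{DMstar} lives on $\cD^{*(1)}$, whereas your localization is set up on the non-rubber stack $\cD^{(1)}$, whose virtual dimension differs; the bridge between the two is precisely the content that would need to be built. None of this makes the strategy hopeless --- it is close in spirit to the approach that eventually succeeded for \cite[Conjecture A]{PandhFark} --- but as a proof of the stated conjecture it is incomplete.
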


\begin{conj}
For any integer $k$ and any triplet $(g,n,\vec{\mathbf{c}})$, we have
$$\mathrm{st}_* \left[ \cD^{*(k)}_{\mathrm{pr},(g,n,\vec{\mathbf{c}})} \right]^\mathrm{vir} = 2^{-g} P^{g,k}_{g,\vec{\mathbf{m}}},$$
where $P^{g,k}_{g,\vec{\mathbf{m}}}$ is Pixton's cycle.
\end{conj}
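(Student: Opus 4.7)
The plan is to reduce the conjecture for general $k$ to the $k=1$ case and then appeal to \cite[Conjecture A]{PandhFark} as a base hypothesis. In the strictly meromorphic case with $k=1$, Corollary \ref{weighted} already identifies $\mathrm{st}_*\left[\cD_{\mathrm{pr},(g,n,\vec{\mathbf{c}})}^{*(1)}\right]^{\mathrm{vir}}$ with the weighted fundamental class $H_{g,\vec{\mathbf{m}}}$, so the conjecture reduces precisely to \cite[Conjecture A]{PandhFark}. For the holomorphic $k=1$ case, the virtual class no longer coincides with the fundamental class, and one would need to extract the correct codimension-$g$ piece via a Hodge-bundle correction coming from the obstruction theory.

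For general $k \geq 2$, I would exploit the obstruction theory of Corollary \ref{DMstar}, which involves the two-term complex $\left[\cO_{\cD^*} \to \mathbb{E}^{\vee}\right]^{\vee}$ built from the Hodge bundle $\mathbb{E} = \pi_* \underline{\omega}_\pi$. Tracking this through the virtual pullback $\alpha^!_{\mathbb{F}}$ appearing in the proof of Corollary \ref{DMstar}, the plan is to obtain an expression of the form
\begin{equation*}
\mathrm{st}_*\left[\cD_{\mathrm{pr},(g,n,\vec{\mathbf{c}})}^{*(k)}\right]^{\mathrm{vir}} = c_g(\mathbb{E}^{\vee}) \cap \mathrm{st}_*\left[\cD_{\mathrm{pr},(g,n,\vec{\mathbf{c}})}^{*(k)}\right] + \text{(boundary corrections)},
\end{equation*}
with the boundary corrections governed by the edge multiplicities in Proposition \ref{multipli}. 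The comparison with $2^{-g}\, P^{g,k}_{g,\vec{\mathbf{m}}}$ would then proceed by first establishing polynomiality in $\vec{\mathbf{m}}$ on both sides---using Janda's regularization on Pixton's side and the log-degeneration formula of Abramovich--Chen--Gross--Siebert on the geometric side---and then matching contributions on sufficiently many boundary strata, where the splitting axioms on both sides can be invoked.

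The main obstacle, I expect, will be establishing a workable boundary splitting for the log-virtual class itself. Since the target projective bundle $P = \mathrm{Proj}(\cO \oplus \omega^{\otimes k})$ genuinely depends on the source curve, under a degeneration along a stable graph one must track how $\omega^{\otimes k}$ restricts to each irreducible component, while the induced contact orders at the new nodes scale with $k$ in a manner compatible with principality. The decisive combinatorial step will be to show that the resulting sum of boundary contributions matches the sum over stable graphs defining $P^{g,k}_{g,\vec{\mathbf{m}}}$; I expect this to require a nontrivial generalization of the double ramification relations used in \cite{Janda} to the $k$-canonical twisting, in the same spirit as the $r$-spin analogue but with the extra difficulty that the target varies in families.
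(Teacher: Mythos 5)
This statement is labeled \texttt{conj} in the paper and is \emph{not} proved there: it is stated as an open conjecture with no accompanying argument, so there is no ``paper's own proof'' to compare against.

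Your submission is not a proof either; it is a research outline. Two substantive issues prevent it from being an argument at all. First, you take \cite[Conjecture A]{PandhFark} ``as a base hypothesis,'' but that statement is itself an open conjecture (the paper only notes that Corollary \ref{weighted} \emph{rephrases} it geometrically; it does not establish it). You therefore cannot settle even the strictly meromorphic $k=1$ case, which you treat as the starting point of the claimed reduction. Second, every step proposed for $k\geq 2$---the equality
$\mathrm{st}_*\left[\cD_{\mathrm{pr}}^{*(k)}\right]^{\mathrm{vir}} = c_g(\mathbb{E}^{\vee})\cap \mathrm{st}_*\left[\cD_{\mathrm{pr}}^{*(k)}\right] + \text{(boundary corrections)}$,
the polynomiality in $\vec{\mathbf{m}}$, the log-degeneration/splitting formula for the log-virtual class, and the matching with Pixton's graph sum---is asserted as ``the plan'' or ``I expect,'' with no verification. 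In particular, the displayed formula relating the virtual class to a Chern class of the Hodge bundle plus boundary corrections does not follow from the two-term obstruction theory $\left[\cO_{\cD^*}\to\mathbb{E}^{\vee}\right]^{\vee}$ appearing in the proof of Corollary \ref{DMstar} without a genuine localization or degeneration argument, and you yourself flag the ``decisive combinatorial step'' as an unresolved obstacle. None of this is filled in, and the paper provides none of it. You should treat the statement as what it is---an open conjecture---and either attempt to \emph{prove} the ingredient results you enumerate (none of which are currently available) or reframe the submission as a strategy discussion rather than a proof.
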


\begin{bibdiv}
\begin{biblist}

\bib{QChen2}{article}{
       author={Abramovich, Dan},
       author={Chen, Qile},
       title={Stable logarithmic maps to Deligne-Faltings pairs II},
   journal={The Asian Journal of Mathematics},
   volume = {18},
   date = {2014},
   number = {3},
   pages = {465-488},
     } 

\bib{QCheneval}{article}{
       author={Abramovich, Dan},
       author={Chen, Qile},
       author={Gillam, Danny},
       author={Marcus, Steffen},
       title={The evaluation space of logarithmic stable maps},
   journal={available at arXiv:arXiv:1012.5416},
   volume = {},
   date = {},
   number = {},
   pages = {},
     }

\bib{ACbook}{article}{
       author={Abramovich, Dan},
       author={Chen, Qile},
       author={Gillam, Danny},
       author={Huang, Yuhao},
       author={Olsson, Martin},
       author={Satriano, Matthew},
       author={Sun, Shenghao},       
       title={Logarithmic geometry and moduli},
       journal={Handbook of Moduli, available at arXiv:1006.5870v1},
       volume={},
       date={2010},
       number={},
       pages={},
     }

\bib{BF}{article}{
       author={Behrend, Kai},
       author={Fantechi, Barbara},
       title={The intrinsic normal cone},
       journal={Inv. Math},
       volume={128},
       date={1997},
       number={},
       pages={45-88},
     } 

\bib{BurDR}{article}{
       author={Buryak, Alexandr},
       title={Double ramification cycles and integrable hierarchies},
   journal={Communications in Mathematical Physics},
   volume = {336},
   date = {2015},
   number = {3},
   pages = {1085-1107},
       }

\bib{BDGR}{article}{
       author={Buryak, Alexandr},
       author={Dubrovin, Boris},
       author={Gu\'er\'e, J\'er\'emy},
       author={Rossi, Paolo},
       title={Tau-structure for the double ramification hierarchies},
   journal={available at arXiv:1602.05423},
   volume = {},
   date = {},
   number = {},
   pages = {},
       }

\bib{Cad07}{article}{
       author={Cadman, Charles},
       title={Using stacks to impose tangency conditions on curves},
   journal={Amer. J. Math.},
   volume = {129},
   date = {2007},
   number = {2},
   pages = {405-427},
       }

\bib{QChen}{article}{
       author={Chen, Qile},
       title={Stable logarithmic maps to Deligne-Faltings pairs I},
   journal={Annals of Mathematics (2)},
   volume={180},
   date={2014},
   number={2},
   pages={455-521},
     }

\bib{QChendegen}{article}{
       author={Chen, Qile},
       title={The degeneration formula for logarithmic expanded degenerations},
   journal={Journal of Algebraic Geometry},
   volume={23},
   date={2014},
   number={2},
   pages={341-392},
     }

\bib{EHKV}{article}{
       author={Edidin, Dan},
       author={Hassett, Brendan},
       author={Kresch, Andrew},
       author={Vistoli, Angelo},
       title={Brauer groups and quotient stacks},
   journal={Amer. J. Math.},
   volume = {123},
   date = {2001},
   number = {4},
   pages = {761-777},
       }

\bib{PandhFark}{article}{
       author={Farkas, Gavril},
       author={Pandharipande, Rahul},
       title={The moduli space of twisted canonical divisors},
       journal={available at arXiv:1508.07940v2},
       volume={},
       date={2015},
       number={},
       pages={},
     }

\bib{GrossSieb}{article}{
       author={Gross, Mark},
       author={Siebert, Bernd},
       title={Logarithmic Gromov--Witten invariants},
   journal={J. Amer. Math. Soc.},
   volume = {26},
   date = {2013},
   number = {},
   pages = {451-510},
       }

\bib{Hain}{article}{
       author={Hain, Richard},
       title={Normal functions and the geometry of moduli spaces of curves},
   journal={Handbook of moduli, Adv. Lect. Math. (ALM), Int. Press, Somerville, MA},
   volume = {I},
   date = {2013},
   number = {24},
   pages = {527-578},
       }

\bib{Ill}{article}{
       author={Illusie, Luc},
       title={Complexe cotangent et d\'eformations I},
       journal={Lecture Notes Math.},
       volume={239},
       date={1971},
       number={},
       pages={},
     }     
     
\bib{IP03}{article}{
       author={Ionel, Eleny-Nicoleta},
       author={Parker, Thomas H.},
       title={Relative Gromov-Witten invariants},
   journal={Ann. of Math. (2)},
   volume = {157},
   date = {2003},
   number = {1},
   pages = {45-96},
       }

\bib{IP04}{article}{
       author={Ionel, Eleny-Nicoleta},
       author={Parker, Thomas H.},
       title={The symplectic sum formula for Gromov-Witten invariants},
   journal={Ann. of Math. (2)},
   volume = {159},
   date = {2004},
   number = {3},
   pages = {935-1025},
       }

\bib{Janda}{article}{
       author={Janda, Felix},
       author={Pandharipande, Rahul},
       author={Pixton, Aaron},
       author={Zvonkine, Dimitri},
       title={Double ramification cycles on the moduli spaces of curves},
   journal={available at arXiv:1602.04705},
   volume = {},
   date = {},
   number = {},
   pages = {},
       }

\bib{Kato}{article}{
       author={Kato, Fumiharu},
       title={Log smooth deformation and moduli of log smooth curves},
       journal={Internat. J. Math.},
       volume={11},
       date={2000},
       number={2},
       pages={215-232},
     }   

\bib{Kresch}{article}{
       author={Kresch, Andrew},
       title={Cycle groups for Artin stacks},
       journal={Inv. Math},
       volume={138},
       date={1999},
       number={3},
       pages={495-536},
     }

\bib{Li01}{article}{
       author={Li, Jun},
       title={Stable morphisms to singular schemes and relative stable morphisms},
   journal={J. Differential
Geom.},
   volume = {57},
   date = {2001},
   number = {3},
   pages = {509-578},
       }

\bib{Li02}{article}{
       author={Li, Jun},
       title={A degeneration formula of GW-invariants},
   journal={J. Differential Geom.},
   volume = {60},
   date = {2002},
   number = {2},
   pages = {199-293},
       }
     
\bib{LR01}{article}{
       author={Li, An-Min},
       author={Ruan, Yongbin},
       title={Symplectic surgery and Gromov-Witten invariants of Calabi-Yau 3-folds},
   journal={Invent. Math.},
   volume = {145},
   date = {2001},
   number = {1},
   pages = {151-218},
       }

\bib{Manolache}{article}{
       author={Manolache, Cristina},
       title={Virtual pull-backs},
       journal={Journal of Algebraic Geometry},
       volume={21},
       date={2012},
       number={},
       pages={201-245},
     }       
     
\bib{MO05}{article}{
       author={Matsuki, Kenji},
       author={Olsson, Martin},
       author={Wise, Jonathan},
       title={Kawamata-Viehweg vanishing as Kodaira vanishing for stacks},
   journal={Math. Res. Lett.},
   volume = {12},
   date = {2005},
   number = {2-3},
   pages = {207?217},
       } 
  
\bib{Olsson2}{article}{
       author={Olsson, Martin},
       title={Logarithmic geometry and algebraic stacks},
   journal={Ann. Sci. Ecole Norm. Sup.},
   volume = {36},
   date = {2003},
   number = {4},
   pages = {747-791},
       } 

\bib{Olsson}{article}{
       author={Olsson, Martin},
       title={The logarithmic cotangent complex},
   journal={Math. Ann.},
   volume = {333},
   date = {2005},
   number = {},
   pages = {859-931},
       }

\bib{Olsson3}{article}{
       author={Olsson, Martin},
       title={Deformation theory of representable morphisms of algebraic stacks},
   journal={Math. Z.},
   volume = {253},
   date = {2006},
   number = {},
   pages = {25-62},
       }

\bib{Pixton}{article}{
       author={Pixton, Aaron},
       title={Double ramification cycles and tautological relations on $\overline{\cM}_{g,n}$},
   journal={},
   volume = {},
   date = {2014},
   number = {},
   pages = {},
       }

\bib{Wise}{article}{
       author={Abramovich, Dan},
       author={Marcus, Steffen},
       author={Wise, Jonathan},
       title={Comparison theorems for Gromov-Witten invariants of smooth pairs and of degenerations},
   journal={available at arXiv:1207.2085},
   volume = {},
   date = {},
   number = {},
   pages = {},
       }     
     
\end{biblist}
\end{bibdiv}
\end{document}